\newcommand{\ts}{\textstyle}
\newcommand{\bwedge}{{\textstyle\bigwedge}}
\newcommand{\coker}{\operatorname{Coker}}
\newcommand{\fm}{{\mathfrak m}}
\newcommand{\fn}{{\mathfrak n}}
\newcommand{\fp}{{\mathfrak p}}
\newcommand{\cls}[1]{{\operatorname{cls}(#1)}}
\newcommand{\col}{\colon}
\newcommand{\dd}{\partial}
\newcommand{\fd}{\operatorname{fd}}
\newcommand{\hch}[4]{\operatorname{HH}_{#1}(#3\var #2;#4)}
\newcommand{\hcoh}[4]{\operatorname{HH}^{#1}(#3\var #2;#4)}
\newcommand{\env}[2][{}]{{#2}{}^{\mathsf e}_{#1}}
\newcommand{\envv}[2]{{#1}_{#2}^{\mathsf e}}
\newcommand{\hh}[1]{\operatorname{H}(#1)}
\newcommand{\HH}[2]{\operatorname{H}_{#1}(#2)}
\newcommand{\CH}[2]{\operatorname{H}^{#1}(#2)}
\newcommand{\hra}{\hookrightarrow}
\newcommand{\id}{\operatorname{id}}
\newcommand{\Ker}{\operatorname{Ker}}
\newcommand{\lra}{\longrightarrow}
\newcommand{\op}[1]{{#1}{}^{\mathsf o}}
\newcommand{\sfc}{\mathsf c}
\newcommand{\projres}{{\mathsf p}}
\newcommand{\injres}{{\mathsf i}}
\newcommand{\nat}{{}^\natural}
\newcommand{\ov}{\bar}
\newcommand{\ul}{\underline}
\newcommand{\pd}{\operatorname{pd}}
\newcommand{\Shift}{\mathsf{\Sigma}}
\newcommand{\Spec}{\operatorname{Spec}}
\newcommand{\Mor}[3]{\operatorname{Mor}_{#1}(#2,#3)}
\newcommand{\Ext}[4]{\operatorname{Ext}^{#1}_{#2}(#3,#4){}}
\newcommand{\Hom}[3]{\operatorname{Hom}_{#1}(#2,#3)}
\newcommand{\Rhom}[3]{\operatorname{\mathsf{R}Hom}_{#1}(#2,#3)}
\newcommand{\dtensor}[1]{\otimes^{\mathsf{L}}_{#1}}
\newcommand{\Tor}[4]{\operatorname{Tor}_{#1}^{#2}(#3,#4){}}
\newcommand{\dcat}[1][S]{{\mathsf D}(#1)}
\newcommand{\dcatf}[1]{{\mathsf P}(#1)}
\newcommand{\tra}{\twoheadrightarrow}
\newcommand{\var}{{\hskip.7pt\vert\hskip.7pt}}
\newcommand{\wt}{\widetilde}
\newcommand{\xla}{\xleftarrow}
\newcommand{\xra}{\xrightarrow}
\newcommand{\BN}{{\mathbb N}}
\newcommand{\BZ}{{\mathbb Z}}
\theoremstyle{plain}
\newtheorem{theorem}{Theorem}[section]
\newtheorem{lemma}[theorem]{Lemma}
\newtheorem{corollary}[theorem]{Corollary}
\newtheorem{itheorem}{Theorem}
\newtheorem{mysublemma}{Lemma}[theorem]
\theoremstyle{definition}
\newtheorem{example}[theorem]{Example}
\newtheorem{construction}[theorem]{Construction}
\newtheorem{chunk}[theorem]{}
\newtheorem{subchunk}[mysublemma]{}
\theoremstyle{remark}
\newtheorem{step}{Step}
\newtheorem*{Notation}{Notation}
\newtheorem{remark}[theorem]{Remark}
\newtheorem{remarks}[theorem]{Remarks}
\numberwithin{equation}{theorem}
\font\cms=cmss10
\font\cmss=cmss8
\font\cmt=cmtex10
\newcommand{\D}{{\text {\cms\char'104}}}
\newcommand{\qc}{{\text {\cmss\char'161}{{\text {\cmss\char'143}}}}}
\newcommand{\Dqc}{\D_\qc\mkern1mu}
\newcommand\Dqcpl{\D_\qc^{\lift.95,\text{\cmt\char'053},}}
\newcommand{\Dc}{\D_{{{\text {\cmss\char'143}}}}\mkern1mu}
\newcommand\Dcmi{\D_{{{\text {\cmss\char'143}}}}^{\lift.95,\text
{\cmt\char'055},}}
\newcommand{\Otimes}[1]{\otimes_{#1}^{\mathsf L}}
\newcommand\iso{{\mkern8mu\longrightarrow \mkern-25.5mu{}^\sim\mkern17mu}}
\newcommand\tiso{{\mkern6mu\rightarrow\mkern-21mu{\vphantom{h}}^\simeq\mkern9mu}}
\newcommand{\<}{\mkern-1mu}
\renewcommand{\>}{\mkern1mu}
\newcommand{\OX}{{\mathcal O_{\<\<X}}}
\newcommand{\bL}{\mathsf L}
\newcommand{\R}{\mathsf R}
\newcommand{\RH}{\mathsf{R}\mathcal{H}om}
\newcommand{\Rf}{\R f_{\<*}}
\def\lift#1,#2,{\vbox to 0pt{\vskip-#1 ex\hbox{$\scriptstyle #2$}\vss}}
\begin{document}

\title[Reduction of derived Hochschild functors]
{Reduction of derived Hochschild functors\\
over commutative algebras and schemes}

\author[L.\,L.\,Avramov]{Luchezar L.~Avramov} \address{Department of Mathematics, 
University of Nebraska,  Lincoln, NE 68588, U.S.A.}  
\email {avramov@math.unl.edu}

\author[S.\,B.\,Iyengar]{Srikanth B.~Iyengar} \address{Department of Mathematics, 
University of Nebraska, Lincoln, NE 68588, U.S.A.}  
\email{iyengar@math.unl.edu}

\author[J.\,Lipman]{Joseph Lipman} \address{Department of Mathematics, 
Purdue University, W. Lafayette, IN 47907, U.S.A.} 
\email{jlipman@purdue.edu}

\author[S.\,Nayak]{Suresh Nayak} \address{Chennai Mathematical
Institute, Siruseri 603103, INDIA}  
\email{snayak@cmi.ac.in}

\thanks{Research partly supported by NSF grants
DMS 0201904 and DMS 0803082 (LLA), 
DMS 0602498 (SBI), and NSA grant H98230-06-1-0010 (JL)} 

\keywords{Hochschild derived functors, Hochschild cohomology,
homomorphism essentially of finite type, smooth homomorphism, 
relative dualizing complex, Grothendieck duality}

\subjclass[2000]{Primary 13D03, 14B25. Secondary 14M05, 16E40}

\date{\today}

\begin{abstract}
We study  functors underlying derived Hochschild cohomology,
also called Shukla cohomology, of a commutative algebra $S$
essentially of finite\- type and of finite flat dimension
over a commutative noetherian ring~$K$.  We~construct a
complex of $S$-modules $D$, and natural reduction isomorphisms
$\mathrm{Ext}^{*}_{S\otimes^{\mathsf L}_KS}(S\var K;M\otimes_{K}^{\mathsf
L}N) \simeq\mathrm{Ext}^{*}_{S}(\mathrm{\mathsf{R}Hom}_{S}(M,D),N)$
for all complexes of \mbox{$S$-modules $N$} and all complexes $M$
of finite flat dimension\- over~$K$ whose homology~$\mathrm{H}(M)$ is
finitely generated over $S$; such isomorphisms determine~$D$ up to derived
isomorphism.  Using Grothendieck duality theory we establish analogous
isomorphisms for any  essentially finite-type flat map $f\colon X\to Y$ of
noetherian schemes, with $f^!\mathcal O_Y$ in place of~$D$.\looseness=-1
 \end{abstract}

\maketitle

\setcounter{tocdepth}{1}
\tableofcontents

   \section*{Introduction}
We study commutative algebras essentially of finite type over some
commutative noetherian ring $K$.  Let  $\sigma\col K\to S$ denote the
structure map of such an algebra. When $S$ is projective as a $K$-module,
for example, when $K$ is a field, the Hochschild cohomology $\hcoh*KS-$
allows one to investigate certain properties of the homomorphism $\sigma$
in terms of properties of $S$, viewed as a module over the enveloping
algebra $\env S=S\otimes_{K}S$.  This comes about via isomorphisms
 \[
\hcoh nKSL = \Ext n{\env S}SL\,,
 \]
established by Cartan and Eilenberg~\cite{CE} for an arbitrary
$S$-bimodule $L$.

In the absence of projectivity, one can turn to a cohomology theory
introduced by MacLane \cite{Ml0} for $K=\BZ$, extended by Shukla~\cite{Sh}
to all rings $K$, and recognized by Quillen~\cite{Qu1} as a derived
version of Hochschild cohomology; see Section \ref{Derived Hochschild
functors over algebras}.

A central result of this article is a reduction of the computation of
derived Hochschild cohomology with coefficients in $M\dtensor{K}N$ to
a computation of iterated derived functors over the ring $S$ itself;
this is new even in the classical situation.

We write $\dcat$ for the derived category of $S$-modules, and
$\dcatf{\sigma}$ for its full subcategory consisting of complexes with
finite homology that are isomorphic in $\dcat[K]$ to bounded complexes
of flat $K$-modules.  As part of Theorem~\ref{thm:reduction2} we prove:

\begin{itheorem}
\label{ithm:main}
When $S$ has finite flat dimension as a $K$-module there exists a 
unique up to isomorphism complex $D^{\sigma}\in\dcatf\sigma$, such 
that for each $M\in\dcatf{\sigma}$ and every $N\in\dcat$ there is an
isomorphism that is natural in $M$ and $N$:
 \[
\Rhom {S\dtensor{K}S}S{M\dtensor{K}N} \simeq \Rhom S{\Rhom
SM{D^{\sigma}}}N\,.
 \]
  \end{itheorem}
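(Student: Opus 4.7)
The plan proceeds in three stages: a Yoneda-style uniqueness argument, an explicit construction of $D^{\sigma}$ from a factorization of $\sigma$, and verification of the universal property by reducing to the smooth case.

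First, for uniqueness, since $\sigma$ has finite flat dimension the complex $S$ itself belongs to $\dcatf\sigma$. Substituting $M = S$ in the asserted isomorphism collapses its right-hand side to $\Rhom S{D^{\sigma}}N$, so $D^{\sigma}$ represents the functor $N \mapsto \Rhom{\env S}{S}{S \dtensor K N}$ on $\dcat$. By Yoneda, $D^{\sigma}$ is determined up to unique isomorphism in $\dcat$.

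For existence, I would factor $\sigma$ as $K \xra{\sigma'} P \xra{\pi} S$ with $\sigma'$ essentially smooth of some relative dimension $n$ and $\pi$ surjective; such factorizations exist because $\sigma$ is essentially of finite type. The finite flat dimension hypothesis on $\sigma$, combined with smoothness of $\sigma'$, forces $S$ to have finite projective dimension over $P$. The candidate is the relative dualizing complex
$$
D^{\sigma} := \Rhom{P}{S}{\Shift^n \Omega^n_{P/K}}\,,
$$
which lies in $\dcatf\sigma$ because $\Omega^n_{P/K}$ is finitely generated projective over $P$ and $S$ has finite projective dimension there.

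The universal property would then be verified in two substeps matching the factorization. For the smooth piece $\sigma'$, the derived Hochschild--Kostant--Rosenberg description of $\Rhom{\env P}{P}{-}$ on $\env P$-modules of the form $M \dtensor K N$ yields the desired reduction with $D^{\sigma'} = \Shift^n \Omega^n_{P/K}$. To descend to $\sigma$, use the identification $\env S \simeq S \dtensor P \env P \dtensor P S$ and restriction--induction adjunctions between $\env S$-modules and $\env P$-modules to rewrite $\Rhom{\env S}{S}{M \dtensor K N}$ in terms of $\Rhom{\env P}{P}{M \dtensor K N}$ (where $M, N$ are viewed as $P$-modules via $\pi$). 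Applying the smooth case converts this into an expression involving $\Shift^n \Omega^n_{P/K}$, and a final Hom--tensor adjunction across $\pi\col P \to S$ rewrites the result as $\Rhom S{\Rhom S M{D^{\sigma}}}N$.

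The principal obstacle is the descent step: chaining derived adjunctions across the five rings $K, P, S, \env P, \env S$ requires careful verification that the various flat and projective dimension hypotheses permit the adjunctions in the unbounded derived category, and that the resulting isomorphisms are natural in both $M$ and $N$. A closely related subtlety is that the constructed $D^{\sigma}$ a priori depends on the chosen factorization; its intrinsic nature follows a posteriori from the Yoneda uniqueness of the first step, providing a consistency check that is in effect the affine shadow of Grothendieck's construction of the twisted inverse image $\sigma^{!}K$.
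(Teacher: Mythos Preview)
Your uniqueness argument and your construction of $D^\sigma$ are both correct and agree with the paper's Section~1. The high-level architecture of your existence proof---handle the smooth case via the computation of $\Rhom{\env P}P{\env P}$, then descend along the finite surjection $\pi\colon P\to S$---is also the paper's strategy. The gap is in the descent step, and it is not merely a matter of checking boundedness hypotheses.

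First, when $S$ is not $K$-flat the object $S\dtensor KS$ is not a ring, so ``$\env S$-modules'' and ``restriction--induction between $\env S$-modules and $\env P$-modules'' have no direct meaning. The paper makes this precise by replacing $S$ with a flat graded-commutative DG algebra resolution $B\xra{\simeq}S$ over~$P$ and working with the honest DG algebra $\env B=B\otimes_KB$; the functor $\Rhom{S\dtensor KS}S{-}$ is then \emph{defined} as $\Rhom{\env B}S{-}$ (Theorem~3.2).

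Second, and more seriously, even if you grant yourself a meaning for $\env S$, the adjunction you need in the outer slot does not exist. After the smooth step you have
\[
\Rhom{\env P}P{M\dtensor KN}\ \simeq\ \Rhom P{\Rhom PM{\Shift^d\Omega^d_{P|K}}}N\,,
\]
and the inner Hom indeed rewrites as $\Rhom SM{D^\sigma}$ by the usual adjunction along $\pi$. But the outer $\Rhom P{-}N$ does \emph{not} become $\Rhom S{-}N$: for $S$-complexes $X$ and $N$ one has $\Rhom PXN\not\simeq\Rhom SXN$ in general (take $X=N=S$; the left side computes $\Ext^*_P(S,S)$). The paper solves this by passing through the DG resolution: since $B\xra{\simeq}S$ over $P$, one has $\Rhom S{-}N\simeq\Rhom B{-}N$, and the relation between $\Rhom B$ and $\Rhom P$ is mediated by a further DG algebra $C=\env B\otimes_{\env P}A$, where $A\xra{\simeq}P$ is a finite projective $\env P$-resolution of the diagonal. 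The chain
\[
\Rhom S{\cdots}N\ \simeq\ \Rhom B{\cdots}N\ \simeq\ \Rhom CB{\Rhom P{\cdots}N}\ \simeq\ \cdots\ \simeq\ \Rhom{\env B}S{M\dtensor KN}
\]
(Steps~4--8 of the proof of Theorem~4.1) is where the real work lies, and none of the intermediate DG algebras $A$, $B$, $C$, $\ov C=B\otimes_PB$ appears in your five-ring list $K,P,S,\env P,\env S$. These DG resolutions are not bookkeeping devices; they supply the adjunctions that your ring-level sketch lacks.
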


The complex $D^\sigma$ is an algebraic version of a \emph{relative
dualizing complex} used in algebraic geometry, see~\eqref{reldual}.
A direct, explicit construction of~$D^{\sigma}$ is given in
Section~\ref{Relative dualizing complexes}.  When $S$ is flat as a
$K$-module, $M$ and $N$ are $S$-modules, and $M$ is flat over $K$ and
finite over $S$, the theorem yields isomorphisms of $S$-modules
 \[
\Ext n{\env S}S{M\otimes_KN}\cong\Ext{n}{S}{\Rhom SM{D^{\sigma}}}N
 \]
for all $n\in\BZ$; they were originally proved in the first preprint 
version of \cite{AI:mm}.

Our second main result is a global version of part of
Theorem \ref{ithm:main}.  For a map of schemes $f\colon X\to Y$,
$f_0^{-1}\mathcal O_Y$ is a sheaf of commutative rings on~$X\<$, whose
stalk at any point $x\in X$ is $\mathcal O_{Y,\>f(x)\>}$ (see 
Section~\ref{Reduction over schemes}). The derived category of (sheaves
of) $f_0^{-1}\mathcal O_Y\>$-modules is denoted by $\D(f_0^{-1}\mathcal
O_Y)$.  Corollary \ref{formulae} of Theorem~\ref{thm:reduction3} gives:

\begin{itheorem}
\label{ithm:schemes}
Let\/ $f\col X\to Y$ be an essentially finite-type, flat map of noetherian
schemes; let\/ $X\xla{\pi_1}X\times_{Y}X\xra{\pi_2}X$ be the canonical
projections; let\/ $\delta\col X\to X\times_{Y}X$ be the diagonal
morphism; and let\/ $M$ and $N$ be complexes of\/  $\OX$-modules.

If\/ $M$ has coherent cohomology and is isomorphic in $\D(f_0^{-1}\mathcal
O_Y)$ to a bounded complex of $f_0^{-1}\mathcal O_Y$-modules that are
flat over~$Y\mkern-.5mu,$ and if $N$ has bounded-above quasi-coherent
homology, then one has an isomorphism
 \[
\delta^!(\pi_1^*M\Otimes{X\times_{Y}X}\pi_2^*N)\iso
\RH_X(\RH_{X}(M,f^{!}\mathcal O_{Y}),N)\,.
 \]
\end{itheorem}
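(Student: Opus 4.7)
My plan is to deduce this corollary from Theorem~\ref{thm:reduction3}, the scheme-theoretic reduction theorem that globalizes Theorem~\ref{ithm:main}, by invoking Grothendieck duality for the diagonal morphism $\delta$. The expected form of Theorem~\ref{thm:reduction3} is a natural isomorphism
\[
\RH_{X\times_{Y}X}(\delta_*\OX,\pi_1^*M\Otimes{X\times_{Y}X}\pi_2^*N)\iso\delta_*\RH_X(\RH_X(M,f^{!}\mathcal O_{Y}),N),
\]
which is the direct geometrization of Theorem~\ref{ithm:main}: under the identification of the algebraic $S\otimes^{\mathsf L}_K S$-module $S$ with $\delta_*\OX$ on the enveloping scheme $X\times_{Y}X$, and with $f^{!}\mathcal O_{Y}$ playing the role of the relative dualizing complex $D^{\sigma}$, the two sides correspond.

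Next, I would apply the sheafified duality adjunction for $\delta$. Because $f$ is essentially of finite type and flat, $\delta$ is essentially of finite type and comes equipped with its twisted inverse image $\delta^!$; Grothendieck duality supplies a natural isomorphism
\[
\delta_*\>\delta^!(F)\simeq\RH_{X\times_{Y}X}(\delta_*\OX,F)
\]
for $F$ in a suitable subcategory of $\D(X\times_{Y}X)$. Specializing to $F=\pi_1^*M\Otimes{X\times_{Y}X}\pi_2^*N$---which, by the flatness of $f$ together with the hypotheses on $M$ and $N$, belongs to the appropriate subcategory---rewrites the left-hand side of the previous display as $\delta_*\>\delta^!(\pi_1^*M\Otimes{X\times_{Y}X}\pi_2^*N)$.

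Combining the two isomorphisms yields
\[
\delta_*\>\delta^!(\pi_1^*M\Otimes{X\times_{Y}X}\pi_2^*N)\iso\delta_*\RH_X(\RH_X(M,f^{!}\mathcal O_{Y}),N).
\]
Because $\delta_*$ is fully faithful on the pertinent subcategory of $\D(X)$ (its essential image being complexes on $X\times_{Y}X$ supported set-theoretically along the diagonal, with $\delta^*\delta_*\simeq\id$), stripping $\delta_*$ from both sides produces the desired isomorphism.

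The main obstacle is the Grothendieck duality step: I must justify $\delta_*\>\delta^!\simeq\RH_{X\times_{Y}X}(\delta_*\OX,-)$ in precisely the form needed for the essentially finite-type---not merely finite---diagonal of a flat essentially finite-type map, and confirm that $\pi_1^*M\Otimes{X\times_{Y}X}\pi_2^*N$ lies in a category where both Theorem~\ref{thm:reduction3} and this duality simultaneously apply. A secondary technical matter is reconciling the finite-flat-dimension hypothesis on $M$ (formulated over $f_0^{-1}\mathcal O_Y$) with the flatness conditions needed to control $\pi_i^*$ and $\Otimes{X\times_{Y}X}$ on the fiber product; the flatness of $f$ ensures that the $f_0^{-1}\mathcal O_Y$-flat resolution of $M$ behaves well under $\pi_1^*$ and that $\pi_2^*N$ retains its quasi-coherent boundedness, so all base-change and projection formulas invoked along the way are legitimate.
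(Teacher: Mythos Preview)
Your plan rests on a misguess about the content of Theorem~\ref{thm:reduction3}. That theorem does not deliver an isomorphism of the shape
\[
\RH_{X\times_{Y}X}(\delta_*\OX,\ \pi_1^*M\Otimes{X\times_YX}\pi_2^*N)\ \simeq\ \delta_*\RH_X(\RH_X(M,f^!\mathcal O_Y),N);
\]
rather, it already produces an isomorphism with $\delta^!$ built in, but with the ``wrong'' first factor inside. Specialized to the diagonal (Corollary~\ref{cor:reduction5} with $E=\mathcal O_Y$) it reads
\[
\delta^!\big(\pi_1^*\RH_X(M,f^!\mathcal O_Y)\Otimes{X\times_YX}\pi_2^*N\big)\ \iso\ \RH_X(M,N).
\]
The passage from this to the formula in the statement is not via sheafified duality for $\delta$ followed by stripping $\delta_*$; it is via \emph{biduality}. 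One substitutes $D_fM\!:=\RH_X(M,f^!\mathcal O_Y)$ for $M$ in the display above and then invokes the isomorphism $M\iso D_fD_fM$ of Remark~\ref{rem:semidualizing, global}, valid because $M\in\dcatf f$. That is the whole argument for Corollary~\ref{formulae}; neither the duality isomorphism $\delta_*\delta^!\simeq\RH_{X\times_YX}(\delta_*\OX,-)$ nor any full-faithfulness of $\delta_*$ enters at this stage. Since your plan omits the biduality step entirely, it does not reach the target.

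Two smaller points. Your worry that $\delta$ is ``essentially finite-type---not merely finite'' is unfounded: under the paper's standing separatedness hypothesis, $\delta$ is a closed immersion, hence finite and proper. And the genuine Grothendieck-duality work (base change $\beta_\Xi$, the isomorphism~\eqref{twisted RHom}, and the key Lemma~\ref{an iso}) has already been absorbed into the proof of Theorem~\ref{thm:reduction3}; the corollary is meant to be the easy consequence, and its content is exactly the biduality trick you did not anticipate.
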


When both schemes $X$ and $Y$ are affine, and $f$ corresponds to an
essentially finite-type ring homomorphism,  Theorem~\ref{ithm:schemes}
reduces to a special case of Theorem~\ref{ithm:main}, namely, where
the $K$-algebra $S$ is flat and $N$ is homologically bounded above.
In Section~\ref{Reduction over schemes} we also obtain global analogs
of other results proved earlier in the paper for complexes over rings.
A pattern emerging from these series of parallel results is that neither
version of a theorem implies the other one in full generality.  This
intriguing discrepancy suggests the existence of stronger global results.

The proofs of Theorems~~\ref{ithm:main} and~\ref{ithm:schemes} follow
very different routes.  The first one is based on isomorphisms in derived
categories of differential graded algebras; background material on the
topic is collected in Section~\ref{DG derived categories}.  The second
one involves fundamental results of Grothendieck duality theory,
systematically developed in \cite{H, Co2, Lp2}; the relevant notions
and theorems are reviewed in Section \ref{A:dual}.

\section{Relative dualizing complexes}
\label{Relative dualizing complexes}

\emph{In this section $\sigma\col K\to S$ denotes a homomorphism of 
commutative rings.}

For any $K$-algebra $P$ and each $n\in\BZ$ we write $\Omega_{P|K}$
for the $P$-module of K\"ahler differentials of $P$ over $K$, and set
$\Omega^n_{P|K}={\ts\bwedge}^{n}_P\Omega_{P|K}$ for each $n\in\BN$.

Recall that $\sigma$ is said to be \emph{essentially of finite type}
if it can be factored as
  \begin{equation}
 \label{eq:eft}
K\hra K[x_1,\dots,x_e]\to V^{-1}K[x_1,\dots,x_e]=Q\tra S\,,
   \end{equation}
where $x_1,\dots,x_e$ are indeterminates, $V$ is a multiplicatively
closed set, the first two maps are canonical, the equality defines $Q$,
and the last arrow is a surjective ring homomorphism.  We fix such a
factorization and set
 \begin{equation}
   \label{eq:reldual}
D^{\sigma}=\Shift^e\Rhom PS{\Omega^e_{Q|K}} \quad\text{in}\quad \dcat\,,
 \end{equation}
where $\dcat$ denotes the derived category of $S$-modules.  Any complex
isomorphic to $D^\sigma$ in $\dcat$ is called a \emph{relative dualizing
complex} of~$\sigma$.  To obtain such complexes we factor $\sigma$
through essentially smooth maps, see~\ref{ch:smooth}.

\begin{theorem}
  \label{thm:uniqueness}
If  $K\to P\to S$ is a factorization of $\sigma$, with $K\to P$
essentially smooth of relative dimension $d$ and $P\to S$ finite, then
there exists an isomorphism
  \[
D^{\sigma}\simeq \Shift^{d}\Rhom PS{\Omega^{d}_{P|K}}
      \quad\text{in}\quad \dcat\,.
  \]
\end{theorem}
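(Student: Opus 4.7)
The plan is to compare the given factorization $K\to P\to S$ with the canonical factorization~\eqref{eq:eft} of $\sigma$ by forming a common refinement, and to reduce the theorem to a single residue-type isomorphism. Let $K\to Q\to S$ be the factorization in~\eqref{eq:eft}, with $K\to Q$ essentially smooth of relative dimension $e$ and $Q\to S$ surjective. Form $T := P \otimes_K Q$. Since essential smoothness is stable under base change, $K\to T$ is essentially smooth of relative dimension $d+e$; the induced map $T\to S$, $p\otimes q \mapsto p_Sq_S$, is surjective (because $Q\to S$ is), hence finite, and both $K\to P\to T\to S$ and $K\to Q\to T\to S$ are factorizations of $\sigma$ of the type considered in the theorem. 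By the symmetric roles of $P$ and $Q$, the theorem reduces to the following general comparison: for every chain $K\to A\to B\to S$ with $K\to A$ and $A\to B$ essentially smooth of relative dimensions $a$ and $b$, and both $A\to S$ and $B\to S$ finite,
\[
\Shift^{a+b}\Rhom{B}{S}{\Omega^{a+b}_{B|K}} \simeq \Shift^{a}\Rhom{A}{S}{\Omega^{a}_{A|K}} \quad \text{in}\ \dcat.
\]

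Next, the conormal sequence $0\to B\otimes_A\Omega^1_{A|K}\to\Omega^1_{B|K}\to\Omega^1_{B|A}\to 0$ is split exact, since $\Omega^1_{B|A}$ is a finitely generated projective $B$-module. Taking top exterior powers gives $\Omega^{a+b}_{B|K}\simeq(B\otimes_A\Omega^{a}_{A|K})\otimes_B\Omega^{b}_{B|A}$. Because $\Omega^b_{B|A}$ is invertible it commutes with $\Rhom{B}{S}{-}$, so the comparison reduces to the key identity
\[
\Shift^{b}\Rhom{B}{S}{(B\otimes_AN)\otimes_B\Omega^{b}_{B|A}} \simeq \Rhom{A}{S}{N} \qquad (\star)
\]
for $N=\Omega^{a}_{A|K}$. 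Via the change-of-rings adjunction $\Rhom{A}{S}{N}\simeq\Rhom{B}{S}{\Rhom{A}{B}{N}}$, the identity $(\star)$ follows once one exhibits a morphism $\Shift^{b}((B\otimes_AN)\otimes_B\Omega^{b}_{B|A})\to\Rhom{A}{B}{N}$ in $\dcat[B]$ that becomes an isomorphism after applying $\Rhom{B}{S}{-}$. Such a morphism is the \emph{fundamental class} of the essentially smooth map $A\to B$. Its iso property (after $\Rhom{B}{S}{-}$) can be checked locally on $\Spec B$; by the local structure of essentially smooth maps one reduces to $B = A[x_1,\dots,x_b]_V$, a localization of a polynomial ring, and the finiteness of $A\to S$ forces the kernel of $B\to S$ to contain a regular sequence of length $b$, so that a Koszul resolution of $S$ over $B$ provides the identification directly.

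The main obstacle is the proof of $(\star)$. The fundamental-class morphism $\Shift^{b}((B\otimes_AN)\otimes_B\Omega^{b}_{B|A})\to\Rhom{A}{B}{N}$ is not an isomorphism in $\dcat[B]$ in general --- for instance, with $A=K$, $B=K[x]$, $N=K$, one has $\Rhom{K}{K[x]}{K}\simeq\prod_{n\geq 0}K$ (the $K$-linear dual), whereas $\Shift^{1}\Omega^{1}_{K[x]|K}\simeq\Shift K[x]$ is free of rank one up to shift over $K[x]$. The identification holds only after applying $\Rhom{B}{S}{-}$, and this requires \emph{both} hypotheses that $A\to S$ and $B\to S$ are finite, so that $S$ admits a finite Koszul-type presentation over both rings. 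Establishing this compatibility in a purely algebraic manner --- rather than importing the Grothendieck duality machinery developed in the scheme-theoretic part of the paper --- is where the principal technical work of the proof is concentrated.
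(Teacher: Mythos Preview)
Your overall reduction---forming $T=P\otimes_KQ$ and comparing each of $P$ and $Q$ to $T$ via a ``relative'' statement---matches the paper's proof exactly. Moreover, your key identity $(\star)$, specialized to $N=A$, is precisely the paper's Lemma~\ref{lem:finite_factorization}: for $A\to S$ finite and $A\to B$ essentially smooth of relative dimension~$b$, one has $\Shift^{b}\Rhom{B}{S}{\Omega^{b}_{B|A}}\simeq\Rhom{A}{S}{A}$ in $\dcat$. So the architecture is right, and the whole theorem does rest on this lemma.

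Where you diverge from the paper is in the proof of $(\star)$, and there your argument has a genuine gap. The claim that ``by the local structure of essentially smooth maps one reduces to $B=A[x_1,\dots,x_b]_V$'' is not correct: an essentially smooth map is Zariski-locally \emph{essentially \'etale} over a localized polynomial ring, not isomorphic to one. (Already for $b=0$ a nontrivial finite separable field extension is essentially smooth of relative dimension~$0$ but is not a localization of the base.) In your application this matters: for the chain $Q\to T=P\otimes_KQ\to S$ the map $Q\to T$ is the base change of the arbitrary essentially smooth map $K\to P$, so you cannot reduce to the polynomial case without an additional \'etale-compatibility argument. You also have not actually produced the global ``fundamental class'' morphism; you acknowledge this as the locus of the real work, but as written no proof of $(\star)$ is given.

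The paper bypasses all of this by proving Lemma~\ref{lem:finite_factorization} via the diagonal. With $\env B=B\otimes_AB$ and $\mu\col\env B\to B$ the multiplication, the essential smoothness of $A\to B$ forces $\mu$ to be a local complete intersection, hence $\pd_{\env B}B<\infty$ and $\Rhom{\env B}{B}{\env B}\simeq\Shift^{-b}\Hom{B}{\Omega^{b}_{B|A}}{B}$ (Lemmas~\ref{diagonal:smooth} and~\ref{cor:smooth}). A short chain of standard isomorphisms---adjunction, evaluation for complexes of finite projective dimension, and the invertibility of $\Omega^{b}_{B|A}$---then yields $(\star)$ directly, with no localization and no appeal to a structure theorem for smooth maps. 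This Hochschild/diagonal approach is what you are missing; it replaces your undefined fundamental class and your flawed local reduction with an intrinsic computation that works for any essentially smooth $A\to B$.
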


The isomorphism in the theorem can be chosen in a coherent way for all
$K$-algebras essentially of finite type.  To prove this statement, or even
to  make it precise, we need to appeal to the theory of the pseudofunctor
${}^!$ of Grothendieck duality theory; see \cite[Ch.\,4]{Lp2}.
Canonicity is not used in this paper.

We write $\dcatf{\sigma}$ for the full subcategory of $\dcat$ consisting
of complexes $M\in\dcat$ such that  $\hh M$ is finite over~$S$ and $M$
is isomorphic in $\dcat[K]$ to some bounded complex of flat $K$-modules.

The name given to the complex $D^\sigma$ is explained by the next result.

\begin{theorem}
\label{thm:reduction1}
When $\fd_KS$ is finite the complex $D^\sigma$ has the following
properties.
 \begin{enumerate}[\rm(1)]
 \item
For each $M$ in $\dcatf{\sigma}$ the complex $\Rhom SM{D^{\sigma}}$ is in
$\dcatf{\sigma}$, and the biduality morphism gives a canonical
isomorphism
 \begin{equation*}
\delta^M\col M\simeq\Rhom S{\Rhom SM{D^{\sigma}}}{D^{\sigma}}
      \quad\text{in}\quad \dcat\,.
     \end{equation*}
 \item
One has $D^\sigma\in\dcatf{\sigma}$, and the homothety
map gives a canonical isomorphism
 \begin{equation*}
\chi^{D^\sigma}\col S\simeq\Rhom S{D^{\sigma}}{D^{\sigma}}
      \quad\text{in}\quad \dcat\,.
   \end{equation*}
 \end{enumerate}
\end{theorem}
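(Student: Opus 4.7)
The plan is to reduce to a convenient representative of $D^{\sigma}$: by Theorem~\ref{thm:uniqueness}, I may fix a factorization $\sigma = K \to P \to S$ with $K \to P$ essentially smooth of some relative dimension~$d$ and $P \to S$ finite, so that
\[
D^{\sigma} \simeq \Shift^{d}\Rhom{P}{S}{\Omega^{d}_{P|K}}.
\]
The central technical ingredient is the adjunction isomorphism, valid for every $M\in\dcat$,
\[
\Rhom{S}{M}{D^{\sigma}} \simeq \Shift^{d}\Rhom{P}{M}{\Omega^{d}_{P|K}}\,,
\]
obtained from Hom-tensor adjunction along $P \to S$ together with $M \simeq M\dtensor{S}S$. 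Since $K\to P$ is essentially smooth of relative dimension $d$, the $P$-module $\Omega^{d}_{P|K}$ is invertible, so the right-hand side is a twist of the $P$-linear dual of $M$ and is therefore tractable.

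For part (1) the first task is to show that any $M\in\dcatf{\sigma}$ is perfect over $P$. Since $K\to P$ is essentially smooth, hence flat, there is a standard transfer principle ensuring that $\fd_{K}M<\infty$ together with $\hh{M}$ finite over $P$ (which is immediate from finiteness over $S$ since $P\to S$ is finite) yield $\pd_{P}M<\infty$; combined with the homological boundedness of $M$ (implicit in the definition of $\dcatf{\sigma}$), $M$ is thus quasi-isomorphic to a bounded complex of finitely generated projective $P$-modules. Applying $\Hom{P}{-}{\Omega^{d}_{P|K}}$ to such a representative produces a bounded complex of finitely generated projective $P$-modules (using the invertibility of $\Omega^{d}_{P|K}$); its homology is then finite over $P$ and hence over $S$, via the $S$-module structure inherited from $M$, and it is flat over $K$ since its terms are $P$-projective and $P$ is flat over $K$. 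Therefore $\Rhom{S}{M}{D^{\sigma}}\in\dcatf{\sigma}$.

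For the biduality assertion I apply the central adjunction twice (using what was just shown to ensure that $\Rhom{S}{M}{D^{\sigma}}$ is itself perfect over $P$), obtaining
\[
\Rhom{S}{\Rhom{S}{M}{D^{\sigma}}}{D^{\sigma}} \simeq \Rhom{P}{\Rhom{P}{M}{\Omega^{d}_{P|K}}}{\Omega^{d}_{P|K}}.
\]
Biduality with respect to an invertible $P$-module holds for any $P$-perfect complex, because it reduces term-by-term to the canonical isomorphism $L\otimes_{P} L^{-1}\simeq P$; applied to $M$, this yields an isomorphism, and naturality of the adjunction then identifies it with the biduality morphism $\delta^{M}$.

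Part (2) is obtained by specializing (1) to $M=S$: the hypothesis $\fd_{K}S<\infty$ gives $S\in\dcatf{\sigma}$, so (1) yields $D^{\sigma} = \Rhom{S}{S}{D^{\sigma}} \in \dcatf{\sigma}$, together with an isomorphism $S\simeq\Rhom{S}{D^{\sigma}}{D^{\sigma}}$, which unwinding the adjunctions shows to be induced by the homothety $\chi^{D^{\sigma}}$. I expect the main obstacle to be the transfer principle $\fd_{K}M<\infty \Rightarrow \pd_{P}M<\infty$ when $\hh{M}$ is finite over $P$: this is where the hypothesis $\fd_{K}S<\infty$ enters crucially, and it is typically proved by localizing and invoking the Auslander--Buchsbaum formula on the regular fibers of the smooth map $K\to P$.
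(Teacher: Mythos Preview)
Your proposal is correct and follows essentially the same approach as the paper: reduce $\Rhom{S}{-}{D^\sigma}$ to $\Rhom{P}{-}{L}$ via adjunction (with $L=\Shift^d\Omega^d_{P|K}$), establish that $M\in\dcatf{\sigma}$ is perfect over $P$, and invoke biduality with respect to the invertible module $L$; part~(2) is then the specialization $M=S$. The one point where the paper differs is the transfer principle $\fd_K M<\infty\Rightarrow\pd_P M<\infty$: rather than Auslander--Buchsbaum on fibers, the paper (Lemma~\ref{smooth:fpd}) uses the Cartan--Eilenberg isomorphism $\Tor{n}{P}{G}{-}\cong\Tor{n}{\env P}{P}{G\otimes_K-}$ together with $\pd_{\env P}P<\infty$ from the smoothness criterion~\ref{smooth:criteria}, giving the clean bound $\fd_P M\le\fd_K M+\pd_{\env P}P$.
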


The theorems are proved at the end of the section.  The arguments 
use various properties of (essentially) smooth homomorphisms, 
which we record next.

  \begin{chunk}
 \label{ch:smooth}
Let $\varkappa\col K\to P$ be a homomorphism of commutative 
noetherian rings.

One says that $\varkappa\col K\to P$ is (\emph{essentially}) \emph{smooth}
if it is (essentially) of finite type, flat, and the ring $k\otimes_KP$
is regular for each homomorphism of rings $K\to k$ when $k$ is a field;
see \cite[17.5.1]{Gr} for a proof that this notion of smoothness is
equivalent to that defined in terms of lifting of homomorphisms.

When $\varkappa$ is essentially smooth $\Omega_{P|K}^1$ is finite
projective, so for each prime ideal~$\fp$ of $P$ the $P_\fp$-module
$(\Omega_{P|K}^1)_\fp$ is free of finite rank.  If this rank is equal
to a fixed integer $d$ for all $\fp$, then $K\to P$ is said to be of
\emph{relative dimension} $d$; (essentially) smooth homomorphism of
relative dimension zero are called (\emph{essentially}) \emph{\'etale}.

   \begin{subchunk}
 \label{smooth:lambda}
Set $\env P=P\otimes_KP$ and $I=\Ker(\mu\col\env P\to P)$,
where $\mu$ is the multiplication.

There exist canonical isomorphisms of $P$-modules
  \[
\Omega_{P|K}^1\cong I/I^2\cong \Tor1{\env P}PP\,.
 \]
As $\mu$ is a homomorphism of commutative rings, $\Tor{}{\env P}PP$
has a natural structure of a strictly graded-commutative $P$-algebra,
so the composed isomorphism above extends to a homomorphism of graded
$P$-algebras
  \[
\lambda^{P|K}\col{\ts\bwedge}_P\Omega_{P|K}^1\lra\Tor{}{\env P}PP\,.
  \]
   \end{subchunk}

\begin{subchunk}
\label{smooth:tau}
Let $X\xra{\simeq}P$ be a projective resolution over $\env P$.
The morphism of complexes 
\begin{align*}
\delta\col X\otimes_{\env P}P&\to \Hom{\env P}{\Hom{\env P}X{\env P}}P
\\
\delta(x\otimes p)(\chi)&=(-1)^{(|x|+|p|)|\chi|}\chi(x)p
\end{align*}
yields the first map in the composition below, where $\kappa$ is a K\"unneth 
homomorphism:
\[
\xymatrixrowsep{.2pc} \xymatrixcolsep{2pc} \xymatrix{ 
\hh{X\otimes_{\env P}P} \ar@{->}[r]^-{\HH{}{\delta}}
 &\hh{\Hom{\env P}{\Hom{\env P}X{\env P}}P} 
\\
 {\phantom{\HH{}{X\otimes_{\env P}P}}} \ar@{->}[r]^-{\kappa} 
 & \Hom{\env P}{\HH{}{\Hom{\env P}X{\env P}}}P 
\\
 {\phantom{\HH{}{X\otimes_{\env P}P}}} \ar@{=}[r] 
 & \Hom{P}{\HH{}{\Hom{\env P}X{\env P}}}P\,.
 }
\]
Thus, one gets a homomorphism of graded $P$-modules
  \[
\tau^{P|K}\col \Tor{}{\env P}PP\lra \Hom P{\Ext{}{\env P}P{\env P}}P\,.
  \]
   \end{subchunk}

\begin{subchunk}
\label{smooth:eps}
The composition below, where the first arrow is a biduality map,
\[
\xymatrixrowsep{.2pc} \xymatrixcolsep{3pc} \xymatrix{ 
\Ext{}{\env P}P{\env P} \ar@{->}[r]
 & \Hom P{\Hom P{\Ext{}{\env P}P{\env P}}P}P
\\
 {\phantom{\Ext{}{\env P}P{\env P}}}
 \ar@{->}[r]^-{\Hom P{\tau^{P|K}}P} & \Hom P{\Tor{}{\env P}PP}P\,.
 }
\]
is a homomorphism of graded $P$-modules
   \[
\epsilon_{P|K}\col\Ext{}{\env P}P{\env P}\lra\Hom P{\Tor{}{\env P}P{\env P}}P\,.
  \]
\end{subchunk}

The maps above appear in homological characterizations of smoothness:

\begin{subchunk}
\label{smooth:criteria}
Let $K\to P$ be a flat and essentially of finite type homomorphism of
rings, and set $I=\Ker(\mu\col\env P\to P)$.  The following 
conditions are equivalent.
 \begin{enumerate}[\quad\rm(i)]
\item 
The homomorphism $K\to P$ is essentially smooth.
\item 
The ideal $I_\fm$ is generated by a regular sequence for each
prime ideal $\fm\supseteq I$.
\item 
The $P$-module $\Omega_{P|K}^1$ is projective and the map 
$ \lambda^{P|K}$ from \ref{smooth:lambda} is bijective.
\item 
The projective dimension $\pd_{\env P}P$ is finite.
\end{enumerate}

The equivalence of the first three conditions is due to Hochschild,
Kostant, and Rosenberg when $K$ is a perfect field, and to
Andr\'e~\cite[Prop.~C]{An} in general.  The implication (ii)$\implies$(iv)
is clear, and the converse is proved by Rodicio \cite[Cor.~2]{Ro}.
\end{subchunk}
  \end{chunk}

In the next lemma we use homological dimensions for complexes, as
introduced in \cite{AF:hd}. They are based on notions of semiprojective
and semiflat resolutions, recalled in \ref{semi:existence}. The
\emph{projective dimension} of $M\in\dcat[P]$ in defined by the formula
\[
\pd_PM=\inf\left\{n\in\BZ \left|\,
   \begin{gathered}
     \text{$n\ge \sup\hh M$ and $F\simeq M$ in $\dcat[P]$ with $F$}\\
     \text{semiprojective and $\coker(\dd^F_{n+1})$ projective}
   \end{gathered} \right\}\right.\,.
\]
The number obtained by replacing `semiprojective' with `semiflat' and 
`projective' with `flat' is the \emph{flat dimension} of $M$, denoted 
$\fd_PM$. 

\medskip

\emph{For the rest of this section we fix a factorization $K\to P\to S$
of $\sigma$, with $K\to P$ essentially smooth of relative dimension $d$
and $P\to S$ finite.}

\begin{lemma}
\label{smooth:fpd}
For every complex $M$ of $P$-modules the following inequalities hold:
\[
\fd_KM\le\fd_PM\le\fd_KM+\pd_{\env P}P\,.
\]
In particular, $\fd_PM$ and $\fd_KM$ are finite simultaneously.

When the $S$-module $\hh M$ is finite one can replace $\fd_PM$ with $\pd_PM$.
\end{lemma}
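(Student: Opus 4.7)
My plan is to establish the two inequalities separately and then deduce the final assertion.

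For $\fd_KM\le\fd_PM$, I would note that since $\varkappa\col K\to P$ is flat (part of the essential smoothness hypothesis), every $P$-flat module is $K$-flat. Hence any $P$-semiflat resolution of $M$ of length at most $\fd_PM$ also serves as a $K$-semiflat resolution.

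For $\fd_PM\le\fd_KM+\pd_{\env P}P$, set $n=\fd_KM$ and $p=\pd_{\env P}P$; the latter is finite by \ref{smooth:criteria}. The strategy is to manufacture a $P$-semiflat resolution of $M$ of length $\le n+p$ by smashing a bounded-above $K$-flat $P$-resolution against an $\env P$-projective resolution of $P$, using the latter to convert $K$-flatness into $P$-flatness. Concretely, start from a $P$-semiprojective resolution $G\to M$ and truncate it at position $n$: set $F_i=G_i$ for $i<n$, $F_n=\coker(\dd^G_{n+1})$, and $F_i=0$ for $i>n$. Then $F\simeq M$ in $\dcat[P]$, and each $F_i$ is $K$-flat ($F_i$ is $P$-projective and hence $K$-flat for $i<n$, while $F_n$ is $K$-flat by the standard syzygy characterization of $\fd_KM\le n$ applied to the $K$-semiflat resolution $G$). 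Now pick an $\env P$-projective resolution $Q\to P$ concentrated in degrees $[0,p]$, and form the total complex $\operatorname{Tot}(F\otimes_PQ)$, where one factor of $\env P$ mediates the tensor product over $P$ and the other provides the $P$-module structure on the total complex. Each term $F_i\otimes_PQ_j$ is a direct summand of $(F_i\otimes_P\env P)^{(I_j)}=(F_i\otimes_KP)^{(I_j)}$, and $F_i\otimes_KP$ is $P$-flat because $F_i$ is $K$-flat (extension of scalars preserves flatness). Moreover, since $Q$ is a bounded complex of $\env P$-projective modules that are $P$-flat via either factor (using flatness of $\varkappa$), $Q$ is $P$-semiflat, and the augmentation $Q\to P$ yields a quasi-isomorphism $F\otimes_PQ\to F\simeq M$ in $\dcat[P]$. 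Hence $\operatorname{Tot}(F\otimes_PQ)$ is a bounded-above complex of $P$-flat modules concentrated in degrees $\le n+p$ that is quasi-isomorphic to $M$, giving $\fd_PM\le n+p$.

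When $\hh M$ is finite over $S$ it is finite over the noetherian ring $P$ via the finite map $P\to S$. The standard fact that flat and projective dimensions coincide for complexes with finitely generated homology over a noetherian ring (cf.\ \cite{AF:hd}) then promotes the bound just established on $\fd_PM$ to the same bound on $\pd_PM$.

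The main obstacle is the construction of $\operatorname{Tot}(F\otimes_PQ)$ and the verification that its terms are $P$-flat while keeping it quasi-isomorphic to $M$ in $\dcat[P]$. The crux is the recognition that tensoring a $K$-flat $P$-module over $P$ with an $\env P$-projective module yields a $P$-flat module, and that this promotion of flatness costs exactly $\pd_{\env P}P$ additional homological degrees.
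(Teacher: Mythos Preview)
Your approach is a hands-on version of the paper's: the paper invokes the Cartan--Eilenberg isomorphism $\Tor{n}{P}{G}{-}\cong\Tor{n}{\env P}{P}{G\otimes_K-}$ from \cite[X.2.8]{CE}, valid for the $K$-flat syzygy $G=\coker(\dd_{q+1})$, to get $\fd_PG\le\pd_{\env P}P$ in one stroke, and then dimension-shifts back to $M$. Your tensoring with the $\env P$-projective resolution $Q$ is exactly the construction that underlies that formula.

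There is, however, a genuine gap in your final step. From ``$\operatorname{Tot}(F\otimes_PQ)$ is a bounded-above complex of $P$-flat modules in degrees $\le n+p$ quasi-isomorphic to $M$'' you cannot conclude $\fd_PM\le n+p$: a bounded-above complex of flat modules need not be semiflat, so its top degree does not bound the flat dimension. Over $\BZ$, the complex $\cdots\xrightarrow{2}\BZ\xrightarrow{2}\BZ\to0$ has free terms and top degree $0$, is quasi-isomorphic to $\BZ/2\BZ$, yet $\fd_{\BZ}\BZ/2\BZ=1$. Since the lemma allows $\inf\hh M=-\infty$, your complex may well be unbounded below, so this matters. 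The remedy is to extract the right piece of your construction: $F_n\otimes_PQ$ is a \emph{bounded} $P$-flat resolution of $F_n$ of length $p$, hence $\fd_PF_n\le p$; then in the semiprojective resolution $G$ of $M$ the module $\coker(\dd^G_{n+p+1})$, being a $p$-th syzygy of $F_n$ in a $P$-flat resolution, is $P$-flat---which is exactly the definition of $\fd_PM\le n+p$. (A smaller slip: the quasi-isomorphism $F\otimes_PQ\to F$ does not follow from ``$Q$ is $P$-semiflat''---that would require $F$ to be $P$-semiflat---but rather from the fact that the cone of $Q\to P$ is a bounded acyclic complex of $P$-flat modules, which stays acyclic upon tensoring with any $P$-module.)
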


\begin{proof}
The inequality on the left is a consequence of \cite[4.2(F)]{AF:hd}.   

For the one on the right we may assume $\fd_KM=q<\infty$.  Thus, if
$F\to M$ is a semiflat resolution over $P$, then $G=\coker(\dd^F_{q+1})$
is flat as a $K$-module.  For each $n\in\BZ$ there is a canonical
isomorphism of functors of $P$-modules
\[
\Tor nPG - \cong \Tor n{\env P}P{G\otimes_K-}\,,
\]
see \cite[X.2.8]{CE}, so the desired inequality holds.  Since
$K\to P$ is essentially smooth one has $\pd_{\env P}P<\infty$,
see~\ref{smooth:criteria}, so they imply that $\fd_PM$ is finite if  only
if so is $\fd_KM$.  In case $\hh M$ is finite over $P$ one has $\fd_PM=
\pd_PM$; see~\cite[2.10(F)]{AF:hd}.
 \end{proof}

\begin{lemma}
\label{diagonal:smooth}
The canonical homomorphisms $\lambda^{P|K}_{d}$, $ \tau^{P|K}_d$, and 
$\epsilon_{P|K}^d$ defined in \emph{\ref{smooth:lambda}},
\emph{\ref{smooth:tau}}, and \emph{\ref{smooth:eps}}, respectively,
provide isomorphisms of $P$-modules
\begin{align}
\label{eq:smooth1}
\Ext{n}{\env P}P{\env P}&=0\quad\text{for}\quad n\ne d\,;
\\
\label{eq:smooth2}
{\Hom S{\lambda^{P|K}_{d}}P}\circ 
\epsilon_{P|K}^d \col \Ext{d}{\env P}P{\env P}
&\cong\Hom P{\Omega_{P|K}^d}P\,;
\\
\label{eq:smooth3}
\tau^{P|K}_d\circ\lambda^{P|K}_d \col \Omega_{P|K}^d
&\cong \Hom  P{\Ext{d}{\env P}P{\env P}}P\,.
\end{align}
\end{lemma}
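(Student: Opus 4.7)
Since $K\to P$ is essentially smooth, \ref{smooth:criteria}(iv) gives $\pd_{\env P}P<\infty$, so $P$ admits a bounded resolution $X$ by finitely generated projective $\env P$-modules. Consequently $\Ext{}{\env P}P{\env P}$, $\Tor{}{\env P}PP$, and the canonical maps $\lambda^{P|K}$, $\tau^{P|K}$, $\epsilon_{P|K}$ all commute with localization on $\env P$. It therefore suffices to check each claim after localizing at a maximal ideal $\fn$ of $\env P$. If $\fn\not\supseteq I$, then $P_\fn=0$ and every term in every asserted isomorphism vanishes, so we may assume $\fn\supseteq I$.

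By \ref{smooth:criteria}(ii), $I_\fn$ is generated by a regular sequence in $(\env P)_\fn$; combined with Nakayama and the isomorphism $I/I^2\cong\Omega^1_{P|K}$ (locally free of rank $d$ at the image prime $\fq$ in $P$), this sequence must have length exactly $d$. Let $K_\bullet$ denote its Koszul resolution of $P_\fn$ over $(\env P)_\fn$. Tensoring $K_\bullet$ with $P$ over $\env P$ annihilates all differentials, so $\bigl(\Tor n{\env P}PP\bigr)_\fn\cong\bigwedge^n P_\fq^d$, and this matches $\bigl(\Omega^n_{P|K}\bigr)_\fq$ via $\lambda^{P|K}$ (which is an iso by \ref{smooth:criteria}(iii)). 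Dually, $\Hom{\env P}{K_\bullet}{\env P}$ is a Koszul cochain complex, self-dual up to a shift by $d$, whose cohomology vanishes outside degree $d$ and equals $P_\fq$ there. This proves \eqref{eq:smooth1}.

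Since each $K_i$ is finitely generated free over $\env P$, the chain map $\delta$ from \ref{smooth:tau} is a termwise biduality isomorphism $K_\bullet\otimes_{\env P}P\iso\Hom{\env P}{\Hom{\env P}{K_\bullet}{\env P}}P$; in top degree the K\"unneth map $\kappa$ then identifies $\tau^{P|K}_d$ with the canonical perfect pairing between the top of the Koszul complex and that of its dual. Composing with $\lambda^{P|K}_d$ yields \eqref{eq:smooth3}. For \eqref{eq:smooth2}, $\Ext{d}{\env P}P{\env P}$ is locally free of rank $1$, so the biduality map into its double $P$-dual is an isomorphism; combined with $\Hom P{\tau^{P|K}_d}P$ (iso by the preceding sentence), this shows $\epsilon_{P|K}^d$ is an isomorphism, and post-composing with $\Hom P{\lambda^{P|K}_d}P$ gives \eqref{eq:smooth2}. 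The main technical hurdle is verifying that $\kappa$ and the sign conventions in $\delta$ conspire on the Koszul complex to produce the expected non-degenerate pairing rather than some twist of it; this is routine but demands careful bookkeeping.
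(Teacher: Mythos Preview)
Your proposal is correct and follows essentially the same route as the paper: reduce to the local case, use the Koszul resolution on a length-$d$ regular sequence generating the diagonal ideal, read off \eqref{eq:smooth1} from Koszul self-duality, obtain \eqref{eq:smooth3} by checking that $\delta$ is a termwise iso and that $\kappa_d$ is bijective on the Koszul complex, and deduce \eqref{eq:smooth2} from \eqref{eq:smooth3} via invertibility of $\Ext{d}{\env P}P{\env P}$. The only cosmetic difference is that the paper localizes directly at primes of $P$ (equivalently, at primes of $\env P$ containing $I$), whereas you localize at all maximal ideals of $\env P$ and dispose of the case $\fn\not\supseteq I$ separately; this is harmless.
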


\begin{proof}
Set $I=\Ker(\mu)$.  It suffices to prove that the maps above induce
isomorphisms after localization at every $\fn\in\Spec P$.  Fix one, then
set $T=P_\fm$, $R=\envv P{\fn\cap\env P}$ and $J=I_{\fn\cap\env P}$.  The
ideal $J$ is generated by a regular sequence, see~\ref{smooth:criteria}.
Any such sequence consists of $d$ elements:  This follows from the
isomorphisms of $T$-modules
 \[
J/J^2\cong(I/I^2)_\fn\cong(\Omega_{P|K}^1)_\fn\cong T^d\,.
 \]

The Koszul complex $Y$ on such a sequence is a free resolution of $T$ over
$R$.  A well known isomorphism $\Hom RYR\cong\Shift^{-d}Y$ of complexes
of $R$-modules yields $\Ext nRTR=0$ for $n\ne d$ and $\Ext dRTR\cong T$.
This establishes \eqref{eq:smooth1} and shows that $\Ext{d}{\env P}P{\env
P}$ is invertible; as a consequence, \eqref{eq:smooth2} follows from
\eqref{eq:smooth3}.

We analyze the maps in \eqref{eq:smooth3}.  From~\ref{smooth:criteria}
we know that $\lambda^{P|K}_d$ is bijective.  By~\ref{smooth:tau} one
has $\tau^{P|K}_d=\kappa_d\circ\HH d{\delta}$.  The map $\HH d{\delta}$
is bijective, as it can be computed from a resolution $X$ of $P$ by finite
projective $\env P$-modules, and then  $\delta$ itself is an isomorphism.
To establish the isomorphism in \eqref{eq:smooth3} it remains to show that
$(\kappa_d)_{\fm}$ is bijective.  This is a {K\"unneth} map, which can
be computed using the Koszul complex $Y$ above.   Thus, we need to show
that the natural $T$-linear map
\[ 
\HH d{\Hom{R}{\Hom RYR}T}\lra\Hom R{\HH{-d}{\Hom RYR}}T
\]
is bijective.  It has been noted above that both modules involved are isomorphic 
to~$T$, and an easy calculation shows that the map itself is an isomorphism.
  \end{proof}

To continue we need a lemma from general homological algebra.

  \begin{lemma}
 \label{homology:projective}
Let $R$ be an associative ring and $M$ a complex of $R$-modules.

If the graded $R$-module $\hh M$ is projective, then 
there exists a unique up to homotopy morphism of complexes $\hh M\to M$ 
inducing $\id^{\hh M}$, and a unique isomorphism $\alpha\col \hh M\to M$ 
in $\dcat[R]$ with $\hh\alpha=\id^{\hh M}$.
  \end{lemma}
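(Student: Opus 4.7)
The plan is to construct a chain map $\phi\col\hh M\to M$ inducing $\id^{\hh M}$ by exploiting projectivity of each $\HH{n}{M}$ to split the canonical surjection $Z_n(M)\tra\HH{n}{M}$, to deduce uniqueness up to homotopy by another lifting argument applied to the difference of two such chain maps, and finally to promote uniqueness up to homotopy to uniqueness in $\dcat[R]$ by verifying that $\hh M$, regarded as a complex with zero differential, is semiprojective.

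For the first assertion, since the differential of $\hh M$ vanishes, a chain map $\phi\col\hh M\to M$ is just a family of $R$-linear maps $\phi_n\col\HH{n}{M}\to M_n$ factoring through $Z_n(M)$, and it induces the identity on $\hh M$ exactly when each $\phi_n$ is a section of the canonical surjection $Z_n(M)\tra\HH{n}{M}$. Such sections exist by projectivity of $\HH{n}{M}$. If $\phi$ and $\psi$ are two such chain maps, the difference $\phi_n-\psi_n$ factors through $B_n(M)=\dd(M_{n+1})$; projectivity again lifts this to $s_n\col\HH{n}{M}\to M_{n+1}$ with $\dd s_n=\phi_n-\psi_n$, and the family $(s_n)$ is the required chain homotopy. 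In particular, the map $\phi$ is a quasi-isomorphism, hence an isomorphism in $\dcat[R]$, giving the existence half of the second assertion.

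For the uniqueness half, the plan is to reduce to the homotopy uniqueness just proved by showing that $\hh M$ is semiprojective, so that $\Hom{\dcat[R]}{\hh M}{M}$ coincides with the set of chain-homotopy classes of chain maps $\hh M\to M$. To verify semiprojectivity, let $N$ be an acyclic complex of $R$-modules: the complex $\Hom{R}{\hh M}{N}$ decomposes as the product over $n$ of shifts of $\Hom{R}{\HH{n}{M}}{N}$, each of which is acyclic because $\HH{n}{M}$ is projective, and arbitrary products of acyclic complexes of abelian groups remain acyclic. The main subtle point is precisely this semiprojectivity check in the possibly unbounded setting, where a naive bounded-below induction is unavailable and one must instead invoke the product description together with exactness of arbitrary products in the ambient abelian category.
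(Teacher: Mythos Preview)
Your proposal is correct and follows essentially the same line as the paper: both use projectivity of each $\HH iM$ together with the decomposition $\hh M\cong\coprod_i\Shift^i\HH iM$, and both deduce the derived-category uniqueness from semiprojectivity of~$\hh M$. The only difference is organizational---the paper packages existence and homotopy-uniqueness into a single computation $\hh{\Hom R{\hh M}M}\cong\Hom R{\hh M}{\hh M}$ (the map being $\cls\alpha\mapsto\hh\alpha$), while you carry out the section and homotopy liftings by hand.
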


\begin{proof}
One has $\hh M\cong\coprod_{i\in\BZ}\Shift^i\HH iM$ as complexes  
with zero differentials.  The projectivity of the $R$-modules $\HH iM$ 
provides the second link in the chain 
\begin{align*}
\hh{\Hom R{\hh M}M}
&\cong\hh{\prod_{i\in\BZ}\Shift^{-i}\Hom R{\HH iM}M}\\
&\cong\prod_{i\in\BZ}\Shift^{-i}\Hom R{\HH iM}{\hh M}\\
&\cong\Hom R{\coprod_{i\in\BZ}\Shift^i\HH iM}{\hh M}\\
&\cong\Hom R{\hh M}{\hh M}
\end{align*}
of isomorphisms of graded modules.  The composite map is given by
$\cls\alpha\mapsto\hh\alpha$.  The first assertion follows because
$\HH 0{\Hom R{\hh M}M}$ is the set of homotopy classes of morphisms
$\hh M\to M$. For the second, note that one has \[ \Mor{\dcat[R]}{\hh
M}M\cong\HH 0{\Hom R{\hh M}M} \] because  each complex $\Shift^i\HH iM$
is semiprojective, and hence so is $\hh M$.
  \end{proof}

\begin{lemma}
\label{cor:smooth}
In $\dcat[P]$ there exist canonical isomorphisms
\begin{align}
\Rhom{\env P}P{\env P}&\simeq\Shift^{-d}\Hom P{\Omega_{P|K}^d}P\,.
\label{eq:smooth4}
\\
\label{eq:smooth5}
\Rhom P{\Rhom{\env P}P{\env P}}P&\simeq\Shift^{d}\Omega_{P|K}^d\,.
\end{align}
\end{lemma}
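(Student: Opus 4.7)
The plan is to extract both isomorphisms from Lemma~\ref{diagonal:smooth} together with Lemma~\ref{homology:projective}, regarding all complexes of $(\env P)$-modules as complexes of $P$-modules by restriction along either inclusion $P\hra\env P$.

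First I would establish \eqref{eq:smooth4}. Fix a semiprojective resolution $X\xra{\simeq}P$ over $\env P$; then $\Rhom{\env P}P{\env P}$ is represented by $\Hom{\env P}X{\env P}$, whose cohomology is the graded $P$-module $\Ext{*}{\env P}P{\env P}$. By \eqref{eq:smooth1} this cohomology is concentrated in degree $d$, and by \eqref{eq:smooth2} the isomorphism $\Hom P{\lambda^{P|K}_d}P\circ\epsilon_{P|K}^d$ identifies it canonically with $\Hom P{\Omega^d_{P|K}}P$. Since $K\to P$ is essentially smooth, $\Omega^1_{P|K}$ is finite projective over $P$ (see \ref{ch:smooth}), hence so is its $d$-th exterior power $\Omega^d_{P|K}$, and therefore so is the $P$-dual $\Hom P{\Omega^d_{P|K}}P$. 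Viewing $\Rhom{\env P}P{\env P}$ in $\dcat[P]$, its total cohomology is thus the projective graded $P$-module $\Shift^{-d}\Hom P{\Omega^d_{P|K}}P$. Lemma~\ref{homology:projective} then produces a unique isomorphism
\[
\Rhom{\env P}P{\env P}\simeq\Shift^{-d}\Hom P{\Omega_{P|K}^d}P\quad\text{in}\quad\dcat[P]
\]
inducing the identification above, which is the canonical isomorphism \eqref{eq:smooth4}.

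For \eqref{eq:smooth5} I would apply $\Rhom P{-}P$ to \eqref{eq:smooth4}, obtaining
\[
\Rhom P{\Rhom{\env P}P{\env P}}P\simeq\Shift^{d}\>\Rhom P{\Hom P{\Omega^d_{P|K}}P}P.
\]
Because $\Hom P{\Omega^d_{P|K}}P$ is a finite projective $P$-module sitting in a single cohomological degree, the derived Hom on the right collapses to the ordinary Hom, and biduality for the finite projective module $\Omega^d_{P|K}$ yields the canonical isomorphism $\Hom P{\Hom P{\Omega^d_{P|K}}P}P\cong\Omega^d_{P|K}$. Splicing these gives \eqref{eq:smooth5}.

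The only non-formal input is the projectivity of the cohomology of $\Rhom{\env P}P{\env P}$ as a $P$-module; once that is in hand, Lemma~\ref{homology:projective} does all the derived-category work and the rest is biduality for a finite projective module. Canonicity is automatic, again thanks to Lemma~\ref{homology:projective}, which provides not merely an isomorphism but a unique one recovering the prescribed cohomology identification.
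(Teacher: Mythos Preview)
Your proposal is correct and follows essentially the same route as the paper: both arguments use \eqref{eq:smooth1} and \eqref{eq:smooth2} to see that the cohomology of $\Rhom{\env P}P{\env P}$ is a projective graded $P$-module concentrated in degree $d$, invoke Lemma~\ref{homology:projective} to obtain the canonical isomorphism \eqref{eq:smooth4}, and then deduce \eqref{eq:smooth5} from it. Your write-up spells out the biduality step for \eqref{eq:smooth5} more explicitly than the paper, which simply says \eqref{eq:smooth5} is ``the isomorphism induced by'' \eqref{eq:smooth4}, but the content is the same.
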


\begin{proof}
Since $K\to P$ is essentially smooth of relative dimension $d$, the
$P$-module $\Omega_{P|K}^d$ is projective of rank one, and hence so
is $\Hom P{\Omega_{P|K}^d}P$.  The isomorphisms \eqref{eq:smooth1}
and \eqref{eq:smooth2} imply that $\hh{\Rhom{\env P}P{\env P}}$ is
an invertible graded $P$-module.  In particular, it is projective.
Now choose \eqref{eq:smooth4} to be the canonical isomorphism provided by
Lemma \ref{homology:projective}, and  \eqref{eq:smooth5} the isomorphism
induced by it.  \end{proof}

\begin{lemma}
 \label{lem:finite_factorization}
When $\sigma$ is finite there is a canonical isomorphism
\[
\Shift^{d}\Rhom PS{\Omega^{d}_{P|K}}\cong \Rhom KSK
  \quad\text{in}\quad \dcat\,.
\]
  \end{lemma}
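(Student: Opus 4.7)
My plan combines tensor-Hom adjunction for the flat map $\varkappa\colon K\to P$ with the $\env P$-level identifications of Lemma~\ref{cor:smooth}. First, since $K\to P$ is essentially smooth, hence flat, the tensor-Hom adjunction supplies a natural isomorphism in $\dcat$,
\[
\Rhom KSK \simeq \Rhom PS{\Rhom KPK},
\]
so the lemma reduces to producing a canonical isomorphism
\[
\Rhom PS{\Rhom KPK} \simeq \Shift^d\Rhom PS{\Omega^d_{P|K}} \quad\text{in } \dcat.
\]

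To produce this, I would construct a canonical morphism $\phi\colon \Shift^d\Omega^d_{P|K} \to \Rhom KPK$ in $\dcat[P]$ and verify that $\Rhom PS{\phi}$ is an isomorphism. Starting from the identification $\Shift^d\Omega^d_{P|K} \simeq \Rhom P{\Rhom{\env P}{P}{\env P}}{P}$ of Lemma~\ref{cor:smooth} eq.~\eqref{eq:smooth5}, the ring embedding $P\hookrightarrow\env P$, $p\mapsto p\otimes 1$, together with the multiplication $\mu\colon\env P\to P$, gives a natural restriction-of-scalars map $\Rhom{\env P}{P}{\env P}\to \Rhom K{P}{\env P}$ that composes with $\mu$ to land in $\Rhom K{P}{P}$; applying $\Rhom P{-}{P}$ and unwinding then yields the comparison $\phi$. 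Canonicity of $\phi$ is pinned down via Lemma~\ref{homology:projective}, using that $\Ext^d_{\env P}(P,\env P)$ is invertible (Lemma~\ref{diagonal:smooth}) and concentrated in a single degree.

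To verify that $\Rhom PS{\phi}$ is an isomorphism, I localize at each prime $\mathfrak p$ of $P$ containing $\ker(P\to S)$. Essential smoothness of $\varkappa$ of relative dimension $d$ makes $P_\mathfrak p$ locally of the form $K[x_1,\ldots,x_d]$-localized, so both $\Rhom PS{\Rhom KPK}$ and $\Shift^d\Rhom PS{\Omega^d_{P|K}}$ can be computed explicitly by Koszul-type resolutions. The local comparison then becomes a concrete matching of two expressions for the same invertible cohomology module, which amounts to a direct computation generalizing the classical identification $\Ext^1_{\BZ}(\BZ/n,\BZ)\cong\BZ/n$.

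The main obstacle is this last step --- the local verification --- which is essentially a manifestation of local Grothendieck duality for a smooth-followed-by-finite composition. A cleaner alternative to case-by-case Koszul computation would be to observe that the mapping cone of $\phi$ has cohomology supported on the complement of $\Spec S\subseteq \Spec P$, whence $\Rhom PS{-}$ kills it; but establishing this support statement in a way that does not merely repackage the Koszul calculation is itself delicate, and seems to require the same smooth-local structure of $K\to P$.
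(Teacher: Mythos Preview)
Your opening reduction $\Rhom KSK\simeq\Rhom PS{\Rhom KPK}$ via adjunction is fine, but the next step cannot succeed: for $d>0$ there is \emph{no} nonzero morphism $\phi\colon\Shift^d\Omega^d_{P|K}\to\Rhom KPK$ in $\dcat[P]$. Indeed, $\Shift^d\Omega^d_{P|K}$ is concentrated in homological degree~$d$, whereas $\Rhom KPK$ has homology $\Ext iKPK$ sitting in degree $-i\le 0$, so
\[
\Mor{\dcat[P]}{\Shift^d\Omega^d_{P|K}}{\Rhom KPK}=0\quad\text{for }d>0\,.
\]
Your sketched construction actually produces, after applying the contravariant functor $\Rhom P{-}P$, an arrow landing \emph{in} $\Shift^d\Omega^d_{P|K}$, and its source is $\Rhom P{\Rhom KPP}P$ rather than $\Rhom KPK$; so it does not give the $\phi$ you want in either direction. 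Consequently the cone argument is vacuous, and the local verification---which rests on the false premise that an essentially smooth $P$ is Zariski-locally a localized polynomial ring over $K$ (already essentially \'etale extensions show otherwise)---never gets off the ground.

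The paper avoids this dead end by never forming $\Rhom KPK$, an object that is neither coherent nor concentrated in a useful range of degrees. Instead it rewrites $\Rhom PS{\Omega^d_{P|K}}\simeq\Rhom{\env P}P{\Rhom KS{\Omega^d_{P|K}}}$ via \eqref{eq:isos1}, uses $\pd_{\env P}P<\infty$ (see \ref{smooth:criteria}) to replace $\Rhom{\env P}P{-}$ by $\Rhom{\env P}P{\env P}\dtensor{\env P}(-)$, invokes \eqref{eq:smooth4} to identify $\Shift^d\Rhom{\env P}P{\env P}$ with $\Hom P{\Omega^d_{P|K}}P$, and finally cancels the invertible $P$-module $\Omega^d_{P|K}$ against its dual. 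Throughout, the intractable $\Rhom KP{-}$ is replaced by $\Rhom KS{-}$, which is well behaved precisely because $S$ is finite over~$K$.
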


\begin{proof} 
One has a chain of canonical isomorphisms:
\begin{align*}
\Shift^{d}\Rhom PS{\Omega^{d}_{P|K}}
   &\simeq \Shift^{d}\Rhom{\env P}P{\Rhom KS{\Omega^{d}_{P|K}}}\\
   &\simeq \Shift^{d}\Rhom{\env P}P{\env P}\dtensor{\env P}{\Rhom KS{\Omega^{d}_{P|K}}}\\
   &\simeq \Rhom P{\Omega^{d}_{P|K}}P\dtensor{\env P}{\Rhom KS{\Omega^{d}_{P|K}}}\\
   &\simeq \Rhom P{\Omega^{d}_{P|K}}P
     \dtensor{\env P}\big({\Omega^{d}_{P|K}}\dtensor K {\Rhom KSK}\big)\\
   &\simeq \Rhom P{\Omega^{d}_{P|K}}P\dtensor P
        \big(P\dtensor{\env P}\big({{\Omega^{d}_{P|K}}}\dtensor K {\Rhom KSK}\big)\big)\\
   &\simeq \Rhom P{\Omega^{d}_{P|K}}P\dtensor P  
        \big({{\Omega^{d}_{P|K}}}\dtensor P {\Rhom KSK}\big) \\
   &\simeq \Rhom P{\Omega^{d}_{P|K}} {{\Omega^{d}_{P|K}}} \dtensor P {\Rhom KSK}\\
   &\simeq \Rhom KSK\,.         
\end{align*}
The first one holds by a classical associativity formula,
see \eqref{eq:isos1}, the second one because $\pd_{\env
P}P$ is finite, see \ref{smooth:criteria}, the third one
by \eqref{eq:smooth4}.  The last one is induced by the homothety
$P\to\Rhom P{\Omega^{d}_{P|K}}{{\Omega^{d}_{P|K}}}$, which is bijective
as $(\Omega^{d}_{P|K})_\fp\cong P_\fp$ holds as $P_\fp$-modules for each
$\fp\in\Spec P$.  The other isomorphisms are standard.
  \end{proof}

\stepcounter{theorem}

\begin{proof}[Proof of Theorem \emph{\ref{thm:uniqueness}}] 
Let $K\to Q\to S$ be the factorization of $\sigma$ given by
\eqref{eq:eft}, with $Q=V^{-1}K[x_1,\dots,x_e]$.  The isomorphism
  \[
\Omega^{1}_{(P\otimes_KQ)|K}\cong
(\Omega^{1}_{P|K}\otimes_KQ)\oplus(P\otimes_K\Omega^{1}_{Q|K})
  \]
induces the first isomorphism of $(P\otimes_KQ)$-modules below:
 \begin{align*}
\Omega^{d+e}_{(P\otimes_KQ)|K}&\cong
\bigoplus_{i+j=d+e}(\Omega^{i}_{P|K}\otimes_KQ)\otimes_{P\otimes_KQ}
(P\otimes_K\Omega^{j}_{Q|K})\\ &\cong
\Omega^{d}_{P|K}\otimes_{K}\Omega^{e}_{Q|K}\,.
  \end{align*}
The second one holds because for each $\fp\in\Spec P$ one has
$(\Omega^{i}_{P|K})_\fp\cong\wedge^i_{P\fp}(P_\fp^d)=0$ for $i>d$,
and similarly $(\Omega^{i}_{Q|K})_\fp=0$ for $j>e$.  One also has
  \begin{equation}
    \label{eq:basechange}
\Omega^{n}_{(P\otimes_KQ)|Q}\cong \Omega^{n}_{P|K}\otimes_{K}Q
  \quad\text{for every}\quad n\in\BN\,.
  \end{equation}

The isomorphisms above explain the first and third links in the chain
\begin{align*}
\Rhom{P\otimes_{K}Q}S{\Shift^{d+e}\Omega^{d+e}_{(P\otimes_{K}Q)|K}}
 &\simeq\Rhom{P\otimes_{K}Q}S{\Shift^{d}\Omega^{d}_{P|K}\otimes_{K}\Shift^{e}\Omega^{e}_{Q|K}} \\
 &\simeq \Rhom{P\otimes_{K}Q}S{\Shift^{d}\Omega^{d}_{P|K}\otimes_{K} Q} \otimes_{Q}\Shift^{e}\Omega^{e}_{Q|K}\\
 &\simeq \Rhom{P\otimes_{K}Q}S{\Shift^{d}\Omega^{d}_{(P\otimes_KQ)|Q}} \otimes_{Q}\Shift^{e}\Omega^{e}_{Q|K}\\
 &\simeq \Rhom QSQ\otimes_{Q}\Shift^{e}\Omega^{e}_{Q|K} \\
 &\simeq \Rhom QS{\Shift^{e}\Omega^{e}_{Q|K}}
\end{align*}
For the fourth isomorphism, apply Lemma \ref{lem:finite_factorization} to 
the factorization $Q\to P\otimes_{K}Q\to S$ of the finite homomorphism $Q\to S$,
where the first map is essentially smooth by \cite[17.7.4(v)]{Gr} and has
relative dimension $d$ by \eqref{eq:basechange}.  The other isomorphisms are standard.  
By symmetry one also obtains an isomorphism 
  \[
\Rhom{P\otimes_{K}Q}S{\Shift^{d+e}\Omega^{d+e}_{(P\otimes_{K}Q)|K}}
\simeq \Rhom PS{\Shift^{d}\Omega^{d}_{P|K}}\,.
  \qedhere
  \]
  \end{proof}

\begin{proof}[Proof of Theorem \emph{\ref{thm:reduction1}}]
Recall that $K\to P\to S$ is a factorization of $\sigma$ with $K\to P$
essentially smooth of relative dimension $d$ and $P\to S$ finite. Set
$L=\Shift^{d}\Omega^{d}_{P|K}$, and note that one has $D^{\sigma}=\Rhom
PSL$; see Theorem~\ref{thm:uniqueness}.

(1)  Standard adjunctions give isomorphisms of functors
\[
\Rhom S-{D^{\sigma}}\cong\Rhom S-{\Rhom PS{L}}\cong\Rhom P-{L}\,,
\]
For $M\in\dcatf{\sigma}$ Lemma~\ref{smooth:fpd} yields $\pd_PM<\infty$,
so $M$ is represented in $\dcat[P]$ by a bounded complex $F$ of finite
projective $P$-modules.  As $L$ is a shift of a finite  projective
$P$-module, $\Hom PF{L}$ is a bounded complex of finite projective
$P$-modules.  It represents $\Rhom PM{L}$, so one sees that $\hh{\Rhom
PM{L}}$ is finite over $P$.  As $P$ acts on  it through $S$, it is
finite over $S$ as well; furthermore, $\fd_K{\Rhom PM{L}}$ is finite
by Lemma~\ref{smooth:fpd}.

The map $\delta^M$ in $\dcat$ is represented in $\dcat[P]$ by the
canonical biduality map
  \[
F\to\Hom P{\Hom PF{L}}{L}\,.
  \]
This is a quasiisomorphism as $F$ is finite complex of finite projectives
and $L$ is invertible. It follows that $\delta^M$ is an isomorphism.

(2)  Since $\fd_KS$ is finite, (1) applied to $M=S$ shows that
$D^{\sigma}=\Rhom SS{D^{\sigma}}$ is in $\dcatf{\sigma}$ and that
$\delta^S\col S\to \Rhom S{\Rhom SS{D^\sigma}}{D^\sigma}$ is an
isomorphism.  Composing $\delta^S$ with the map induced by the isomorphism
$D^\sigma\simeq\Rhom SS{D^\sigma}$ one gets $\chi^{D^\sigma}\col S\to
\Rhom S{D^\sigma}{D^\sigma}$, hence $\chi^{D^\sigma}$ is an isomorphism.
 \end{proof}

\section{DG derived categories}
\label{DG derived categories}

Our purpose here is to introduce background material on differential
graded homological algebra needed to state and prove the results
in Sections~\ref{Derived Hochschild functors over algebras} and
\ref{Reduction over algebras}.

\emph{In this section $K$ denotes a commutative ring.}

\begin{chunk}\textbf{DG algebras and DG modules.}\ 
Our terminology and conventions generally agree with those of MacLane
\cite[Ch.\,VI]{Ml}.  All DG algebras are defined over $K$, are zero in
negative degrees, and act on their DG modules from the left.  When $A$
is a DG algebra and $N$ a DG $A$-module we write $A\nat$ and $N\nat$
for the graded algebra and graded $ A\nat$-module underlying $A$ and
$N$, respectively.  We set
  \begin{align*}
\inf N&=\inf\{n\in\BZ\mid N_n\ne0\}\,;
 \\
\sup N&=\sup\{n\in\BZ\mid N_n\ne0\}\,.
  \end{align*}
Every element $x\in N$ has a well defined degree, denoted $|x|$.

When $B$ is a DG algebra the complex $A\otimes_KB$ is a DG algebra with
product $(a\otimes b)\cdot (a'\otimes b')=(-1)^{|b||a'| }(aa'\otimes
bb')$.

When $M'$ is a DG $B$-module the complex $N\otimes_KM'$ is
canonically a DG module over $A\otimes_KB$, with $(a\otimes
b)\cdot(n\otimes m')=(-1)^{|b||n|}an\otimes bm'$.

The \emph{opposite DG $K$-algebra} $\op A$ has the same underlying
complex of $K$-modules as $A$, and product $\cdot$ given by $a\cdot
b= (-1)^{|a||b|}ba$.  We identify right DG $A$-modules with DG 
modules over $\op A$, via the formula $am=(-1)^{|a||m|}ma$.  

When $M$ is a DG $B$-module the complex $\Hom KMN$ is canonically
a DG $A\otimes_K\op B$-module, with action given by $\big((a\otimes
b)(\alpha)\big)(m)=(-1)^{|b||\alpha|}a\alpha(bm)$.

We write $\env A$ for the DG $K$-algebra $A\otimes_{K}\op A$. Any morphism
$\alpha\col A\to B$ of DG $K$-algebras induces a morphism $\env\alpha =
\alpha\otimes_{K}\op\alpha$ from $\env A$ to $\env B$.  There is a natural
DG $\env A$-module structure on $A$ given by $(a\otimes a')x=(-1)^{|a'||x|}axa'$.

For every DG $A\otimes_K\op B$-module $L$, \cite[VI.(8.7)]{Ml} yields a 
canonical isomorphism
  \begin{equation}
  \label{eq:isos1}
\Hom {A\otimes_K\op B}L{\Hom KMN} \cong  \Hom A{L\otimes_BM}N\,.
  \end{equation}
For every DG $\op A\otimes_KB$-module $L'$, \cite[VI.(8.3)]{Ml} yields a 
canonical isomorphism
  \begin{equation}
  \label{eq:isos2}
L'\otimes_{A\otimes_K\op B}(N\otimes_KM')\cong(L'\otimes_A N)\otimes_BM'\,.
  \end{equation}
  \end{chunk}
  
\begin{chunk}\textbf{Properties of DG modules.}\ 
A DG $A$-module $F$ is said to be \emph{semiprojective} if the
functor $\Hom AF-$ preserves surjections and quasi-isomorphisms,
and \emph{semiflat} if $(F\otimes_A-)$ preserves injections and
quasi-isomorphisms.  If $F$ is semiprojective, respectively, semiflat, 
then $F\nat$ is projective, respectively, flat, over $A\nat$; the converse 
is true when $F$ is bounded below.  Semiprojectivity implies semiflatness.  

A DG module $I$ is \emph{semiinjective} if $\Hom A-I$ transforms
injections into surjections and preserves quasi-isomorphisms.  If $I$
is semiinjective, then $I\nat$ is injective over $A\nat$; the converse
is true when $I$ is bounded above.

\begin{subchunk}
\label{semi:equivalences}
Every quasi-isomorphisms of DG modules, both of which are either 
semiprojective or semiinjective, is a homotopy equivalence.
  \end{subchunk}

The following properties readily follow from standard adjunction formulas.

\begin{subchunk}
\label{semi:basechange}
Let $\alpha\col A\to B$ be a morphism of DG $K$-algebras, and let $X$ and $Y$ be DG
modules over $A$ and $B$, respectively. The following statements hold:
\begin{enumerate}[\rm(1)]
\item If $X$ is semiprojective, then so is the DG $B$-module $B\otimes_AX$.
\item If $X$ is semiinjective, then so is the DG $B$-module $\Hom ABX$.
\item If $B$ is semiprojective over $A$ and $Y$ is semiprojective over $B$, then $Y$ is
 semiprojective over $A$.
\item If $B$ is semiflat over $A$ and $Y$ is semiinjective over $B$, then $Y$ is
 semiinjective over $A$.
\end{enumerate}
  \end{subchunk}
   \end{chunk}
   
\begin{chunk}\textbf{Resolutions of DG modules.}\  
Let $M$ be a DG $A$-module.

 \begin{subchunk}
\label{semi:existence}
A semiprojective resolution of $M$ is a quasi-isomorphism $F\tiso M$ with
$F$ semiprojective.  Each DG $A$-module $M$ admits such a resolution;
\cite[\S 1]{AH}.

A semiinjective resolution of $M$ is a quasi-isomorphism $M\tiso I$ with
$I$ semiinjective.  Every DG $A$-module $M$ admits such a resolution;
see \cite[\S3-2]{Ke}.

In what follows, for each DG module $M$ over $A$, we fix a semiprojective 
resolution $\pi_{A}^{M}\col \projres_{A}(M)\to M$, 
and a semiinjective resolution $\iota_{A}^{M}\col M\to \injres_{A}(M)$. 

Each morphism of DG modules lifts up to homotopy to a morphism of
their semiprojective resolutions and extends to a morphism of their
semiinjective resolutions, and such a lifting or extension is unique up
to homotopy.  In particular, both $F$ and $I$ are unique up to homotopy
equivalences inducing the identity on $M$.  \end{subchunk}
   \end{chunk}

 \begin{mysublemma}
   \label{lem:change}
Let $\omega\col A\to B$ be a quasi-isomorphism of DG algebras, $I$ a
semiinjective DG $A$-module, $J$ a semiinjective DG $B$-module, 
and $\iota\col J\to I$ a quasi-isomorphism of DG $A$-modules.

For every DG $B$-module $L$ the following map is a 
quasi-isomorphism: 
  \[
\Hom{\omega}L{\iota}\col\Hom BLJ\to\Hom ALI\,.
  \]
 \end{mysublemma}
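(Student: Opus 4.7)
The plan is to factor the map $\Hom\omega L\iota$ through the coinduced DG $B$-module $I'=\Hom A B I$, which is semiinjective over~$B$ by \ref{semi:basechange}(2). This way the statement reduces to producing a single homotopy equivalence between two semiinjective DG $B$-modules and then applying $\Hom B L{-}$.

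First I would introduce the $B$-linear map $\psi\col J\to I'$ given by $\psi(y)(b)=\iota(by)$, namely the adjoint of $\iota\col J\to I$ under the standard restriction--coinduction adjunction $\Hom B L{I'}\cong\Hom A L I$.  A direct unwinding of the adjunction shows that under this canonical isomorphism the map $\Hom B L\psi$ corresponds precisely to $\Hom\omega L\iota$, so it is enough to prove that $\Hom B L\psi$ is a quasi-isomorphism.

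Next I would verify that $\psi$ itself is a quasi-isomorphism.  The evaluation-at-$1$ map $\epsilon\col I'\to I$ is $A$-linear and, under the canonical identification $I\cong\Hom A A I$, coincides with $\Hom A\omega I$.  Since $\omega$ is a quasi-isomorphism of DG $A$-modules and $I$ is semiinjective over~$A$, the map $\epsilon$ is a quasi-isomorphism.  Combined with the evident identity $\epsilon\circ\psi=\iota$ and the two-out-of-three property, this forces $\psi$ to be a quasi-isomorphism.

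Finally, since $\psi$ is a quasi-isomorphism between two DG $B$-modules that are both semiinjective over~$B$, \ref{semi:equivalences} upgrades it to a homotopy equivalence; hence $\Hom B L\psi$ is a homotopy equivalence, and in particular a quasi-isomorphism.  The only step requiring any real care is the bookkeeping that matches $\Hom\omega L\iota$ with the composite of $\Hom B L\psi$ and the adjunction isomorphism; once that identification is in hand, everything else is a formal application of the defining properties of semiinjective DG modules together with the restriction--coinduction adjunction.
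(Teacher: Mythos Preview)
Your proposal is correct and follows essentially the same route as the paper: factor $\iota$ through the coinduced $B$-module $\Hom ABI$ via its adjoint $\psi$ (the paper calls it $\iota'$), observe that evaluation at~$1$ identifies $\Hom ABI\to I$ with $\Hom A\omega I$, use two-out-of-three to see $\psi$ is a quasi-isomorphism, and then invoke \ref{semi:equivalences} and \ref{semi:basechange}(2). The only point where the paper is slightly more careful is the Koszul sign in the formula $\iota'(x)(b)=(-1)^{|x||b|}b\,\iota(x)$, which is needed for $\iota'$ to be a genuine morphism of DG $B$-modules; your formula $\psi(y)(b)=\iota(by)$ should carry the same sign, but you already flag this bookkeeping as the one place requiring care.
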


\begin{proof}
The morphism $\iota$ factors as a composition
  \[
J\xra{\,\iota'\,}\Hom ABI
\xra{\,\Hom A{\omega}I\,}\Hom AAI\cong I
  \]
of morphisms of DG $A$-modules, where $\iota'(x)(b)=(-1)^{|x||b|}b\iota(x)$.  
It follows that $\iota'$ is a quasi-isomorphism.  Now $J$ is a semiinjective
DG $B$-module by hypothesis, $\Hom ABJ$ is one by \ref{semi:basechange}(2),
so \ref{semi:equivalences} yields
  \[
\Hom BLJ\xra{\,\simeq\,}
\Hom BL{\Hom ABI}\cong
\Hom ALI\,.
  \]
It remains to note that the composition of these maps is equal to
$\Hom{\omega}L{\iota}$.
  \end{proof}

 \begin{mysublemma}
   \label{quisms}
Let $\omega\col A\to B$ be a morphism of DG algebras, and let $Y$ and $Y'$ be DG
$B$-modules that are quasi-isomorphic when viewed as DG $A$-modules.

If $\omega$ is a quasi-isomorphism, or if there exists a morphism $\beta\col B\to A$,
such that $\omega\beta=\id^B$, then $Y$ and $Y'$ are quasi-isomorphic as DG $B$-modules.
 \end{mysublemma}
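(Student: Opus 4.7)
My plan is to handle the two hypotheses separately, since they call for different mechanisms.

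The easier case is when $\omega\beta=\id^B$ for some DG algebra map $\beta\col B\to A$: then $\omega$ is surjective, and restriction of scalars along $\beta$ defines an exact functor $\beta^*$ from DG $A$-modules to DG $B$-modules.  For any DG $B$-module $Z$, the composition of restrictions along $\omega$ then $\beta$ yields $Z$ itself, since $b\cdot z=\beta(b)\cdot z=\omega(\beta(b))z=bz$.  Consequently, applying $\beta^*$ to any zigzag of DG $A$-linear quasi-isomorphisms connecting $Y$ and $Y'$ (viewed as DG $A$-modules via $\omega$) produces a zigzag of DG $B$-linear quasi-isomorphisms connecting $Y$ and $Y'$.

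For the quasi-isomorphism case my plan is to exploit Lemma \ref{lem:change}.  I would pick a semiinjective $B$-resolution $\iota_B^{Y'}\col Y'\tiso J$, and then a semiinjective $A$-resolution $\iota\col J\tiso I$ of $J$ regarded as a DG $A$-module via $\omega$.  Since $I$ is semiinjective over $A$ and $Y\simeq Y'\simeq J\simeq I$ in $\dcat[A]$, this isomorphism is represented by an honest DG $A$-linear quasi-isomorphism $\phi\col Y\to I$.  Now Lemma \ref{lem:change} applied with $L=Y$ says that the induced map $\Hom BYJ\to\Hom AYI$ is a quasi-isomorphism, hence a bijection on $H_0$, i.e., on homotopy classes of morphisms.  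The class of $\phi$ thus lifts to the class of a DG $B$-linear morphism $\psi\col Y\to J$ with $\iota\psi$ homotopic to $\phi$.  Taking homology yields $\hh{\iota}\hh{\psi}=\hh{\phi}$ with $\hh{\iota}$ and $\hh{\phi}$ bijective, so $\hh{\psi}$ is bijective and $\psi$ is a DG $B$-linear quasi-isomorphism.  Together with $\iota_B^{Y'}$ this exhibits the desired zigzag over $B$.

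The technical obstacle is in the second case, namely promoting a DG $A$-linear morphism into a semiinjective $A$-resolution of a DG $B$-module $J$ to an actual DG $B$-linear morphism into $J$.  This is exactly what Lemma \ref{lem:change} delivers, reducing the residual work to routine bookkeeping with semiinjective resolutions.
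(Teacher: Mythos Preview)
Your argument is correct in both cases.  For the split case ($\omega\beta=\id^B$) your reasoning is essentially identical to the paper's: restriction along~$\beta$ turns the given $A$-linear zigzag into a $B$-linear one whose endpoints recover the original $B$-module structures on $Y$ and $Y'$.

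For the quasi-isomorphism case your route is genuinely different from the paper's.  The paper works on the projective side: it replaces the intermediate object~$U$ in the $A$-linear span $Y\xla{\simeq}U\xra{\simeq}Y'$ by a semiprojective DG $A$-module, then base-changes via $B\otimes_AU$; the map $\omega\otimes_AU$ is a quasi-isomorphism because $U$ is semiflat, and the induced $B$-linear maps $B\otimes_AU\to Y$ and $B\otimes_AU\to Y'$ give the desired zigzag over~$B$.  You instead work on the injective side, pushing everything into a semiinjective $A$-module~$I$ obtained from a semiinjective $B$-resolution $J$ of $Y'$, and then invoking Lemma~\ref{lem:change} to lift the $A$-linear map $Y\to I$ to a $B$-linear map $Y\to J$.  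Both arguments exploit the equivalence $\dcat[A]\simeq\dcat[B]$ induced by~$\omega$, just through dual mechanisms: the paper via the left adjoint $B\dtensor A-$, you via (a consequence of) the right adjoint $\Rhom AB-$ hidden inside Lemma~\ref{lem:change}.  The paper's approach is slightly more self-contained, since it does not need that auxiliary lemma; yours has the virtue of producing a direct $B$-linear comparison $Y\to J\xleftarrow{\simeq}Y'$ without an extra intermediate object.
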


\begin{proof}
By hypothesis, one has $A$-linear quasi-isomorphisms
$Y\xla{\upsilon}U\xra{\upsilon'}Y'$.

When $\omega$ is a quasi-isomorphism, choose $U$ semiprojective over $
A$, using~\ref{semi:existence}.  With vertical arrows defined to be $b\otimes 
u\mapsto b\upsilon(u)$ and $b\otimes u\mapsto b\upsilon'(u)$ the diagram
\[
\xymatrixrowsep{1pc} \xymatrixcolsep{1.2pc}
\xymatrix{
&&&Y
  \\
U
\ar@/^0.8pc/[urrr]_(.70){\simeq}^(.25){\upsilon}
\ar@/_0.8pc/[drrr]^(.70){\simeq}_(.25){\upsilon'}
\ar@{=}[r]
&A\otimes_AU
\ar@{->}[rr]^{\omega\otimes_AU}_{\simeq}
&&B\otimes_AU
\ar@{->}[u]
\ar@{->}[d]
  \\
&&&Y'
}
\]
commutes.  The vertical maps are morphisms of DG $B$-modules, 
and $\omega\otimes_AU$ is a quasi-isomorphism because $\omega$ 
is one and $U$ is semiprojective.

When $\omega$ has a right inverse $\beta$, note that the $A$-linear 
quasi-isomorphisms $\upsilon$ and $\upsilon'$ are also $B$-linear, 
and that the DG $B$-module structures on $Y$ and $Y'$ induced 
via $\beta$ are identical with their original structures over $B$.
 \end{proof}

We recall basic facts concerning DG derived categories; see Keller
\cite{Ke} for details.

\begin{chunk}\textbf{DG derived categories.}
Let $A$ be a DG algebra and $M$ a DG $A$-module.  

DG $A$-modules and their morphisms form an abelian category.  The
\emph{derived category} $\dcat[A]$ is obtained by keeping the same objects
and by formally inverting all quasi-isomorphisms.  It has a natural
triangulation, with translation functor $\Shift$ is defined on $M$ by
$(\Shift M)_i=M_{i-1}$, $\dd^{\Shift M}\varsigma(m)=-\varsigma(\dd^M(m))$,
and $a\varsigma(m)=(-1)^{|a|}\varsigma(am)$, where $\varsigma\col
M\to\Shift M$ is the degree one map given by $\varsigma(m)=m$.

For any semiprojective resolution $F\to M$, and each $N\in\dcat[A]$
one has
  \[
\Mor{\dcat[R]}MN\cong\HH 0{\Hom RFN}\,.
  \]

\begin{subchunk}
\label{derived functors}
For all $L\in\dcat[\op A]$ and $M,N$ in $\dcat[A]$, the complexes of
$K$-modules
 \[
L\dtensor AM = L\otimes_AF \qquad\text{and}\qquad \Rhom AMN = \Hom AFN
 \]
are defined uniquely up to unique isomorphisms in $\dcat [A]$.
When $\omega\col A\to B$ is a morphism of DG algebras, $L'$, $M'$ and 
$N'$ are DG $B$-modules, and $\lambda\col L\to
L'$, $\mu\col M\to M'$, and $\nu\col N'\to N$ are $\omega$-equivariant
morphisms of DG modules, there exist uniquely defined morphisms
  \begin{align*}
{\lambda}\dtensor{\omega}{\mu}\col&L\dtensor AM\to L'\dtensor B{M'}\,,
  \\
\Rhom{\omega}{\mu}{\nu}\col&\Rhom B{M'}{N'}\to\Rhom AMN\,.
  \end{align*}
that depend functorially on all three arguments, and are isomorphisms
when all the morphisms involved have this property. For each $i\in\BZ$
one sets
  \[
\Tor iALM = \HH i{L\dtensor AM} \quad\text{and}\quad
\Ext iAMN =\HH {-i}{\Rhom AMN}\,.
  \]
    \end{subchunk}

\begin{subchunk}
  \label{DGoverRings}
Associative $K$-algebras are viewed as DG algebras concentrated 
in degree zero, in which case DG modules  are simply complexes 
of left modules.  Graded modules are complexes with zero differential,
and modules are complexes concentrated in degree zero.  The constructions
above specialize to familiar concepts:

When $A_i=0$ for $i\ne0$ the derived category $\dcat[A]$ 
coincides with the classical unbounded derived category of the category of 
$A_0$-modules.   Similarly, if $M$ and $N$ are DG $A$-modules with 
$M_i=0=N_i$ for $i\ne0$, then for all $n\in\BZ$ one has 
$\Ext nAMN=\Ext n{A_0}{M_0}{N_0}$ and $\Tor nAMN=\Tor n{A_0}{M_0}{N_0}$.  
  \end{subchunk}

\begin{subchunk}
\label{DGchangeRings}
Let $\omega\col A\to B$ be a morphism of DG algebras. Viewing DG $B$-modules as DG $A$-modules via restriction along $\omega$, one gets a functor of derived categories
\[
\omega^{*}\col \dcat[B]\to \dcat[A]\,.
\]
When $\omega$ is a quasi-isomorphism it is an equivalence,
with quasi-inverse $B\dtensor A-$.
\end{subchunk}
  \end{chunk}

\section{Derived Hochschild functors}
\label{Derived Hochschild functors over algebras}

In this section we explain the left hand side of the isomorphism in 
Theorem~\ref{ithm:main}.

Let $K$ be a commutative ring and $\sigma\col K\to S$ an associative
$K$-algebra.

  \begin{chunk}
    \label{hoch:res}
A \emph{flat DG algebra resolution} of $\sigma$ is a factorization
$K\to A\xra{\alpha} S$ of $\sigma$ as a composition of morphisms of
DG algebras, where each $K$-module $A_i$ is flat and $\alpha$ is a
quasi-isomorphism; complexes of $S$-modules are viewed as 
DG $A$-modules via $\alpha$.  When $K\to B\xra{\beta} S$ is a flat 
DG algebra resolution of $\sigma$, we say that $\omega\col A\to B$ 
is a \emph{morphism of resolutions} if it is a morphism of DG 
$K$-algebras, satisfying $\beta\omega=\alpha$.

We set $\env A=A\otimes_K\op A$, note that $K\to\op A\xra{\op\alpha}\op
S$ is a flat DG algebra resolution of $\op\sigma\col K\to\op
S$, and turn $S$ into a DG module over $\env A$ by $(a\otimes
a')s=\alpha(a)s\,\op\alpha(a')$.

Flat DG algebra resolutions always exist: A resolution $K\to T\to S$,
with $T\nat$ the tensor algebra of some free non-negatively graded
$K$-module, can be obtained by inductively adjoining noncommuting
variables to $K$; see also Lemma \ref{hoch:uniqueness}.
  \end{chunk}

Here we construct one of four functors of pairs of complexes of
$S$-modules that can be obtained by combining $\Rhom{\env A}S-$
and $S\dtensor{\env A}-$ with $(-\dtensor K-)$ and $\Rhom K--$.
The other three functors are briefly discussed in \ref{hoch:homology}
and \ref{DGAres:prereduction}.

The statement of the following theorem is related to results in
\cite[\S 2]{YZ1}.  We provide a detailed proof, for reasons explained
in \ref{rem:YZ}.

  \begin{theorem}
    \label{hoch:main}
Each flat DG algebra resolution $K\to A\to S$ of $\sigma$  
defines a functor 
\[
\Rhom{\env A}S{-\dtensor K-}\col\dcat\times\dcat[\op S]\to\dcat[S^{\sfc}]\,,
\]
where $S^{\sfc}$ denote the center of $S$, described by 
\eqref{eq:derivedHoch}.  For every flat DG
algebra resolution $K\to B\to S$ of $\sigma$ there is a canonical
natural equivalence of functors
  \[
\omega^{AB}\col\Rhom{\env A}S{-\dtensor K-}\to\Rhom{\env B}S{-\dtensor K-}\,,
  \]
given by \eqref{eq:nattransf2}, and every flat DG algebra resolution
$K\to C\to S$ of $\sigma$ satisfies
  \[
\omega^{AC}=\omega^{BC}\omega^{AB}\,.
  \] \end{theorem}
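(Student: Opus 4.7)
The plan has three stages: first establish that $\Rhom{\env A}S{-\dtensor K-}$ is a well-defined functor with values in $\dcat[S^{\sfc}]$, then construct the comparison $\omega^{AB}$ for any two resolutions, and finally verify the cocycle relation.

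For the functoriality, given $M\in\dcat$ and $N\in\dcat[\op S]$, I would view $M$ as a DG $A$-module via $\alpha$ and $N$ as a DG $\op A$-module via $\op\alpha$, and endow $M\otimes_K N$ with its natural DG $\env A$-module structure from Section~\ref{DG derived categories}. To derive everything in sight, replace $M$ by a semiprojective resolution $F_M\to M$ over $A$ (which exists by~\ref{semi:existence}); since $A$ is $K$-flat, $F_M$ is semiflat over $K$, so $F_M\otimes_K N$ represents $M\dtensor K N$. Then take a semiprojective resolution $F\to S$ over $\env A$ and compute $\Hom{\env A}F{F_M\otimes_K N}$. The left $S$-action on $M$ and right $S$-action on $N$ descend to an action of $S^{\sfc}$ on this complex, and the standard formalism of \ref{derived functors} makes the construction well defined up to canonical isomorphism in $\dcat[S^{\sfc}]$ and functorial in both arguments.

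For the comparison, I would first handle the case where a morphism of resolutions $\omega\col A\to B$ exists. Because $A$ and $B$ are $K$-flat, $\env\omega=\omega\otimes_K\op\omega$ is a quasi-isomorphism, so the restriction functor $(\env\omega)^{*}$ of~\ref{DGchangeRings} is an equivalence, and Lemma~\ref{lem:change} applied to $\env\omega$ and to a semiinjective representative of $M\dtensor K N$ over $\env B$ yields a natural quasi-isomorphism of $\Hom$-complexes that I would take as $\omega^{AB}$ in this situation. For general $A$ and $B$, no direct morphism need exist, so I would pass through a common cover: by the inductive adjunction of noncommuting variables described in~\ref{hoch:res}, there is a flat DG algebra resolution $K\to D\to S$ admitting morphisms of resolutions $D\to A$ and $D\to B$ (build $D$ so that it surjects onto the pullback $A\times_S B$), and set
\[
\omega^{AB}\;=\;\omega^{DB}\circ(\omega^{DA})^{-1}.
\]

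To check that this definition is independent of $D$ and that the cocycle identity $\omega^{AC}=\omega^{BC}\omega^{AB}$ holds, I would select, given any two choices $D,D'$, a further flat DG algebra resolution $D''$ mapping to both $D$ and $D'$, and invoke Lemma~\ref{quisms} together with the homotopy-uniqueness of liftings in~\ref{semi:existence} to see that the two recipes produce the same natural transformation. For the cocycle, pick a single resolution $E$ mapping simultaneously to $A$, $B$, and $C$; then all three arrows $\omega^{AB}$, $\omega^{BC}$, $\omega^{AC}$ are computed from this $E$, and the identity collapses to the triviality $\omega^{EC}(\omega^{EA})^{-1}=\bigl(\omega^{EC}(\omega^{EB})^{-1}\bigr)\bigl(\omega^{EB}(\omega^{EA})^{-1}\bigr)$. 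The main obstacle is precisely the canonicity in the second stage: one must confirm that all auxiliary choices (the bridging algebra $D$, the semiprojective resolution of $S$ over $\env A$, and the semiflat resolutions computing $M\dtensor K N$) wash out, and this is where the uniqueness up to homotopy of morphisms between semiprojective resolutions does the essential work.
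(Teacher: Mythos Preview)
Your outline has a genuine gap at exactly the place you flag as ``the main obstacle.'' When you pick a further bridging resolution $D''$ mapping to both $D$ and $D'$, you obtain two \emph{different} morphisms of DG $K$-algebras $\phi_1,\phi_2\colon D''\to A$ (one factoring through $D$, the other through $D'$), and likewise for $B$. To conclude that the two recipes for $\omega^{AB}$ agree you must show that $\phi_1$ and $\phi_2$ induce the \emph{same} natural equivalence $\Rhom{\env{D''}}S{-\dtensor K-}\to\Rhom{\env A}S{-\dtensor K-}$. Neither Lemma~\ref{quisms} nor the homotopy-uniqueness statement in~\ref{semi:existence} gives this: both concern DG \emph{modules}, whereas what is needed is a comparison of two DG \emph{algebra} morphisms. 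Two such morphisms need not be related by any additive homotopy, so the derived-category machinery for modules does not by itself force the induced transformations to coincide. Remark~\ref{rem:YZ} explains that precisely this point is where an earlier argument in the literature fails.

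The paper closes this gap by invoking the model structure on DG $K$-algebras (see~\ref{DGAres:category}): one chooses a \emph{cofibrant} resolution $T$, so that any two morphisms of resolutions $T\to A$ are \emph{left homotopic} in the model-categorical sense, i.e., related through a cylinder object $T\leftarrow C\to A$. Lemma~\ref{hoch:uniqueness} then uses this cylinder diagram, together with the transitivity from Lemma~\ref{hoch:existence}, to prove $[\omega,\op\omega]=[\varpi,\op\varpi]$. With this in hand one sets $\omega^{AB}=\omega^B_T(\omega^A_T)^{-1}$ for a fixed cofibrant $T$ and checks independence of $T$ and the cocycle identity by routine manipulations. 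Your ``common cover'' idea is close in spirit, but without cofibrancy (or some substitute guaranteeing a multiplicative homotopy between parallel algebra maps) the independence step does not go through.
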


The theorem validates the following notation: 

  \begin{remark}
Fix a flat DG algebra resolution $K\to A\to S$ of $\sigma$ and let
   \[
\Rhom{S\dtensor K\op S}S{-\dtensor K-}\col\dcat\times\dcat[\op
S]\to\dcat[S^{\sfc}]
  \]
denote the functor $\Rhom{A\otimes_K{\op A}}S{-\dtensor K-}$.
For all $L\in\dcat$ and $L'\in\dcat[\op S]$ it yields \emph{derived 
Hochschild cohomology modules} with tensor-decomposable 
coefficients:
  \[
\Ext n{S\dtensor K\op S}S{L\dtensor K{L'}}
=\CH n{\Rhom{S\dtensor K\op S}S{L\dtensor K{L'}}}\,.
  \]
These modules are related to vintage Hochschild cohomology.

For all $S$-modules $L$ and $L'$ there are canonical natural maps
 \[
\hcoh nKS{L\otimes_KL'}\to\Ext n{S\otimes_K\op S}S{L\otimes_KL'}
  \]
of $S^{\sfc}$-modules, where the modules on the left are the classical ones, 
see \ref{DGoverRings}. These are isomorphisms when $S$ is $K$-projective; 
see \cite[IX, \S6]{CE}. When one of $L$ or $L'$ is $K$-flat, there exist 
canonical natural homomorphisms 
 \[
\Ext n{\alpha\otimes_K\op\alpha}S{L\otimes_KL'}\col
\Ext n{S\otimes_K\op S}S{L\otimes_KL'}
\to
\Ext n{S\dtensor K\op S}S{L\otimes_KL'}\,.
  \]
When $S$ is $K$-flat the composition $K\to S\xra{=}S$ is a flat DG resolution 
of $\sigma$ and $\alpha\col A\to S$ is a morphism of resolutions, so the 
theorem shows that the maps above are isomorphisms.
   \end{remark}

\begin{construction}
  \label{hoch:comparison} 
Let $K\to A\xra{\,\alpha\,} S$ and $K\to {A'}\xra{\alpha'}\op
S$ be flat DG algebra resolutions of $\sigma$ and of $\op\sigma$,
respectively.  We turn $S$ into a DG module over $A\otimes_KA'$
by setting $(a\otimes a')s=\alpha(a)s\,\alpha'(a')$.  The action of
$S^{\sfc}$ on $S$ commutes with that of $A\otimes_K{A'}$, and so 
confers a natural structure of complex of $S^{\sfc}$-modules on
  \[
\Hom{A\otimes_K{A'}}S{\injres_{A\otimes_K{A'}}(\projres_A(L)\otimes_K\projres_{A'}(L')}\,,
  \]
where $\projres_A$ and $\injres_{A\otimes_K{A'}}$ refer to the resolutions 
introduced in \ref{semi:existence}.

Let $K\to B\xra{\beta} S$ and $K\to {B'}\xra{\beta'}\op S$ be DG algebra
resolutions of $\sigma$ and $\op\sigma$, respectively, and $\omega\col
A\to B$ and $\omega'\col A'\to B'$ be morphism of resolutions.  We turn DG
$B$-modules into DG $A$-modules via $\omega$, and remark that the equality
$\beta\omega=\alpha$ implies that on $S$-modules the new action of $A$
coincides with the old one.

Let  $\lambda\col L\to M$
be a morphism of DG $S$-modules and $\lambda'\col L'\to M'$ one of DG
$\op S$-modules.  The lifting property of semiprojective DG modules
yields diagrams
  \begin{equation}
  \label{eq:twodiagrams}
\begin{gathered}
\xymatrixrowsep{2pc}
\xymatrixcolsep{2pc}
\xymatrix{
\projres_{A}(L)
\ar@{->}[d]_{\simeq}\ar@{->}[r]^{\wt\lambda}
&\projres_{B}(M)
\ar@{->}[d]^{\simeq}
  \\
L\ar@{->}[r]^{\lambda}
&M}
  \end{gathered}
  \quad\text{and}\quad
\begin{gathered}
\xymatrixrowsep{2pc}
\xymatrixcolsep{2pc}
\xymatrix{
\projres_{A'}(L')
\ar@{->}[d]_{\simeq}\ar@{->}[r]^{\wt\lambda'}
&\projres_{B'}(M')
\ar@{->}[d]^{\simeq}
  \\
L'\ar@{->}[r]^{\lambda'}
&M'}
  \end{gathered}
    \end{equation}
of DG $A$-modules and DG $A'$-modules, respectively, that commute
up to homotopy.  It provides the morphism in the top row of a diagram of
DG $(A\otimes_K{A'})$-modules
  \begin{equation}
  \label{eq:onediagram}
    \begin{gathered}
    \xymatrixrowsep{2pc}
\xymatrixcolsep{1.3pc}
\xymatrix{
\projres_{A}(L)\otimes_{K}\projres_{A'}(L')\ar@{->}[r]^{\wt\lambda\otimes_{K}\lambda'}
\ar@{->}[d]_{\simeq}\ar@{->}[r]^{\wt\lambda\otimes_{K}\wt\lambda'}
&\projres_{B}(M)\otimes_{K}\projres_{B'}(M')
\ar@{->}[dd]^{\simeq}
  \\
\injres_{A\otimes_K{A'}}(\projres_{A}(L)\otimes_{K}\projres_{A'}(L'))
 \\
\injres_{A\otimes_K{A'}}\big(\injres_{B\otimes_K{B'}}
(\projres_{B}(M)\otimes_{K}\projres_{B'}(M'))\big)
\ar@{<-}[u]_-{\epsilon}
&\injres_{B\otimes_K{B'}}(\projres_{B}(M)\otimes_{K}\projres_{B'}(M'))
\ar@{->}[l]_-{\iota}^-{\simeq}
}
     \end{gathered}
  \end{equation}
that commutes up  to homotopy, where $\iota$ is the chosen semiinjective
resolution, and $\epsilon$ is given by the extension property of
semiinjective DG module over $A\otimes_KA'$; for conciseness, we rewrite
these maps as $E\xra{\epsilon}I\xla{\iota}J$.  They are unique up to
homotopy, as the liftings and extensions used for their construction
have this property.

The hypotheses $\beta\omega=\alpha$ and $\beta'\omega'=\alpha'$
imply that $\omega$ and $\omega'$ are quasi-isomor\-phisms, hence so is
$\omega\otimes_K\omega'$, due to the $K$-flatness of $A\nat$ and $B'\nat$.
Since $\iota$ is a quasi-isomorphism, Lemma \ref{lem:change} shows that
so is $\Hom{\omega\otimes_K{\omega'}}S{\iota}$; thus, the latter map
defines in $\dcat[S^{\sfc}]$ an isomorphism,
denoted $\Rhom{\omega\otimes_K{\omega'}}S{\iota}$.  We set
  \begin{multline}
    \label{eq:comparison}
[\omega,\omega'](\lambda,\lambda')=
\Rhom{\omega\otimes_K{\omega'}}S{\iota}^{-1} \circ \Rhom{A\otimes_K{A'}}S{\epsilon}\col\\
\Rhom{A\otimes_K{A'}}S{L\dtensor K{L'}}\lra
\Rhom{B\otimes_K{B'}}S{M\dtensor K{M'}}
  \end{multline}
  \end{construction}

The first statement of the following lemma contains the existence of
the functors $\Rhom{\env A}S{-\dtensor K-}$, asserted in the theorem.
The second statement, concerning the uniqueness of these functors,
is weaker than the desired one, because it only applies to resolutions
that can be compared through a morphism $\omega\col A\to B$.  On the
other hand, it allows one to compare functors defined by independently
chosen resolutions of $\sigma$ and $\op\sigma$.  The extra generality
is needed in the proof of Lemma \ref{hoch:uniqueness}.

\begin{lemma}
  \label{hoch:existence} 
In the notation of Construction \emph{\ref{hoch:comparison}}, the assignment
  \[
 (L,L')\mapsto
\Hom{A\otimes_K{A'}}S{\injres_{A\otimes_K{A'}}(\projres_A(L)\otimes_K\projres_{A'}(L'))}\,,
  \]
defines a functor
  \[ 
\Rhom{A\otimes_K{A'}}S{-\dtensor K-}\col\dcat\times\dcat[\op
S]\to\dcat[S^{\sfc}]\,,
  \]
and the assignment
  \[
(\lambda,\lambda')\mapsto[\omega,\omega'](\lambda,\lambda')\,,
  \]
given by formula \eqref{eq:comparison}, defines a canonical natural 
equivalence of functors
  \[
[\omega,\omega']\col \Rhom{A\otimes_K{A'}}S{-\dtensor
K-}\to\Rhom{B\otimes_K{B'}}S{-\dtensor K-}\,.
  \]
If $K\to C\xra{\gamma}S$ and $K\to C'\xra{\gamma'}S$ are flat DG algebra
resolutions of $\sigma$ and $\op\sigma$, respectively, and $\vartheta\col
B\to C$ and $\vartheta'\col B'\to C'$ are morphism of resolutions, then
 \[
[\vartheta\omega,\vartheta'\omega']=
[\vartheta,\vartheta'][\omega,\omega']\,.
  \]
\end{lemma}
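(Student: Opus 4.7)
\ I take \eqref{eq:comparison} as the primary object. The functor claim (1) is the special case $B=A$, $B'=A'$, $\omega=\omega'=\id$, while the natural-equivalence claim (2) is encoded in the dependence on $(L,L',\lambda,\lambda')$. The plan is to verify four things in turn: (a) \eqref{eq:comparison} descends to a well-defined morphism in $\dcat[S^{\sfc}]$ that is an isomorphism when $(\lambda,\lambda')=(\id,\id)$; (b) specialization to $\omega=\omega'=\id$ yields the functorial action of (1), compatibly with compositions; (c) the family $[\omega,\omega'](\id,\id)$ is natural in $(L,L')$; and (d) the transitivity identity holds.

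\textbf{Well-definedness (a).}\ The lifts $\wt\lambda,\wt\lambda'$ in \eqref{eq:twodiagrams} are unique up to DG-module homotopy by the lifting property of semiprojective resolutions in \ref{semi:existence}; their tensor product $\wt\lambda\otimes_K\wt\lambda'$ inherits this uniqueness. The extension $\epsilon$ in \eqref{eq:onediagram} is unique up to homotopy by the dual extension property for semiinjective DG $(A\otimes_K A')$-modules. Since $\Hom{A\otimes_K A'}{S}{-}$ preserves homotopies, and the $S^{\sfc}$-action on $S$ by multiplication commutes with the $A\otimes_K A'$-action, the morphism defined by \eqref{eq:comparison} depends only on $(L,L',\lambda,\lambda')$ and lies in $\dcat[S^{\sfc}]$. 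That it is an isomorphism when $(\lambda,\lambda')=(\id,\id)$ follows from the argument immediately preceding \eqref{eq:comparison}: $\omega,\omega'$ are quasi-isomorphisms (by $\beta\omega=\alpha$, $\beta'\omega'=\alpha'$), $K$-flatness of $A\nat$ and $B'\nat$ makes $\omega\otimes_K\omega'$ a quasi-isomorphism, and Lemma \ref{lem:change} then shows $\Hom{\omega\otimes_K\omega'}{S}{\iota}$ is a quasi-isomorphism.

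\textbf{Functor and naturality (b), (c).}\ Setting $B=A$, $B'=A'$, $\omega=\omega'=\id$ makes $\omega\otimes_K\omega'$ an identity map and reduces $\iota$ to a quasi-isomorphism between two semiinjective resolutions of the same DG module, hence a homotopy equivalence by \ref{semi:equivalences}. Formula \eqref{eq:comparison} then yields a functorial action of $\Rhom{A\otimes_K A'}{S}{-\dtensor K -}$ on $(\lambda,\lambda')$; preservation of compositions and identities is automatic, since a lift or semiinjective extension of a composite is homotopic to the composite of lifts or extensions. For naturality of $[\omega,\omega']$ with respect to $(\lambda,\lambda')\col(L,L')\to(M,M')$, the key observation is that $\wt\lambda$ can be factored, up to homotopy, either as a lift of $\id_M$ through $\omega$ composed with a lift of $\lambda$ through $\id_A$, or as a lift of $\lambda$ through $\id_B$ composed with a lift of $\id_L$ through $\omega$; the two factorizations are homotopic by the uniqueness of lifts, and similarly for $\wt\lambda'$. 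Applying $\Hom{A\otimes_K A'}{S}{-}$ to the resulting diagrams, the two composites around the naturality square become equal in $\dcat[S^{\sfc}]$.

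\textbf{Transitivity and main obstacle (d).}\ For composable $\omega\col A\to B$, $\vartheta\col B\to C$ and $\omega'\col A'\to B'$, $\vartheta'\col B'\to C'$, the identity $[\vartheta\omega,\vartheta'\omega']=[\vartheta,\vartheta'][\omega,\omega']$ is verified by choosing compatible two-step data: two-step lifts $\projres_A(L)\to\projres_B(L)\to\projres_C(L)$ and primed analogue, whose composition is a lift through $\vartheta\omega$ unique up to homotopy, together with two-step semiinjective extensions telescoping along the chain $A\otimes_K A'\to B\otimes_K B'\to C\otimes_K C'$. A diagram chase then shows that the chain-level representatives of both sides are homotopic, forcing equality in $\dcat[S^{\sfc}]$. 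The main obstacle throughout is precisely this bookkeeping of homotopy-coherent choices at three levels—the individual lifts $\wt\lambda,\wt\lambda'$, their tensor product, and the semiinjective extension $\epsilon$—but the coherence becomes automatic once one passes to the derived category, where every homotopy becomes an identity.
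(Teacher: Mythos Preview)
Your overall approach matches the paper's: both derive functoriality, naturality, and transitivity from the uniqueness-up-to-homotopy of semiprojective lifts and semiinjective extensions. The paper in fact streamlines your (b), (c), (d) into a single master identity $[\vartheta\omega,\vartheta'\omega'](\mu\lambda,\mu'\lambda')=[\vartheta,\vartheta'](\mu,\mu')\circ[\omega,\omega'](\lambda,\lambda')$, from which all three claims follow by specialization; your separate treatment is equivalent.

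There is, however, a genuine gap in step (a). You assert that $[\omega,\omega'](\id,\id)$ is an isomorphism ``from the argument immediately preceding \eqref{eq:comparison},'' but that argument only shows that $\Hom{\omega\otimes_K\omega'}{S}{\iota}$ is a quasi-isomorphism---i.e., that the factor being \emph{inverted} in \eqref{eq:comparison} is invertible. It says nothing about the remaining factor $\Hom{A\otimes_K A'}{S}{\epsilon}$. For the composite to be an isomorphism you must also show this second factor is a quasi-isomorphism, and since $S$ is not semiprojective over $A\otimes_K A'$, it is not enough that $\epsilon$ be a quasi-isomorphism. The paper's argument is: when $\lambda,\lambda'$ are quasi-isomorphisms, so are $\wt\lambda,\wt\lambda'$ by \eqref{eq:twodiagrams}; $K$-flatness of the semiprojectives makes $\wt\lambda\otimes_K\wt\lambda'$ a quasi-isomorphism; diagram \eqref{eq:onediagram} then forces $\epsilon$ to be a quasi-isomorphism between \emph{semiinjective} DG $(A\otimes_K A')$-modules, hence a homotopy equivalence by \ref{semi:equivalences}; and $\Hom{A\otimes_K A'}{S}{-}$ preserves homotopy equivalences. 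This same missing step is exactly what you need in (b) to pass from a functor on complexes-up-to-homotopy to a functor on $\dcat\times\dcat[\op S]$: compatibility with compositions and identities alone does not force quasi-isomorphisms to become isomorphisms.
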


  \begin{proof}
Recall that the maps $E\xra{\epsilon}I\xla{\iota}J$ are unique up to homotopy.  
Thus, $\Hom{A\otimes_K{A'}}S{\epsilon}$ and $\Hom{\omega\otimes_K{\omega'}}S{\iota}$ 
are morphisms of complexes of $S^{\sfc}$-modules defined uniquely up to homotopy.  In view of
\eqref{eq:comparison}, this uniqueness has the following consequences:

The morphism $[\omega,\omega'](\lambda,\lambda')$ depends only on
$\lambda$ and $\lambda'$; one has
  \[
[\id^A,\id^{A'}](\id^L,\id^{L'})
=\id^{\Rhom{A\otimes_K{A'}}S{L\dtensor K{L'}}}\,;
  \]
and for all morphism $\mu\col M\to N$ and $\mu'\col M'\to N'$ of 
complexes of $S$-modules and $\op S$-modules, respectively, there are
equalities
  \[
[\vartheta\omega,\vartheta'\omega'](\mu\lambda,\mu'\lambda')=
[\vartheta,\vartheta'](\mu,\mu')\circ[\omega,\omega'](\lambda,\lambda')\,.
  \]

Suitable specializations of these properties show that
$\Rhom{A\otimes_K{A'}}S{-\dtensor K-}$ is a functor to
$\dcat[S^{\sfc}]$ from the product of the categories of complexes
over $S$ with that of complexes over $\op S$, \emph{and} that
$[\omega,\omega']$ is a natural transformation.

To prove that $[\omega,\omega']$ is an equivalence, it suffices to
show that if $\lambda$ and $\lambda'$ are quasi-isomorphisms, then
$\Rhom{\omega\otimes_K{\omega'}}S{\lambda\dtensor K\lambda'}$ is an
isomorphism.

By \eqref{eq:comparison}, it is enough to show that
$\Rhom{A\otimes_K{A'}}S{\epsilon}$ is a quasi-isomorphism.  As
$\lambda$ and $\lambda'$ are quasi-isomorphisms, the diagrams in
\eqref{eq:twodiagrams} imply that so are $\wt\lambda$ and $\wt\lambda'$.
Due to the $K$-flatness of $A\nat$ and $B'\nat$, their semiprojective
DG modules are $K$-flat, hence $\wt\lambda\otimes_K\wt\lambda'$
is a quasi-isomorphism of DG modules over $A\otimes_KA'$.  Now
diagram \eqref{eq:onediagram} shows that $\epsilon\col E\to I$ is a
quasi-isomorphism.  It follows that it is a homotopy equivalence, because
both $E$ and $J$ are semiinjective DG modules over $A\otimes_KA'$.  This
implies that $\Hom{A\otimes_K{A'}}S{\epsilon}$ is a quasi-isomorphism,
as desired.
  \end{proof}

To clarify how the natural equivalence in
Lemmas~\ref{hoch:existence} depends on $\omega$, we apply Quillen's
homotopical approach in~\cite{Qu}. It is made available by the following
result, see Baues and 
Pirashvili \cite[A.3.1, A.3.5]{BP}:

\begin{chunk}
 \label{DGAres:category}
The category of DG $K$-algebras has a model structure, where
  \begin{enumerate}[\rm\quad(1)]
    \item[$\bullet$]
the \emph{weak equivalences} are the quasi-isomorphisms;
    \item[$\bullet$]
the \emph{fibrations} are the morphisms that are surjective in positive 
degrees;
    \item[$\bullet$]
any DG $K$-algebra, whose underlying graded algebra is the tensor algebra
of a non-negatively graded projective $K$-module, is \emph{cofibrant};
that is, the structure map from $K$ is a \emph{cofibration}.
  \end{enumerate}

We recall some consequences of the existence of a model
structure, following \cite{DS}:  For all DG $K$-algebras $T$ and $A$, 
there exists a relation on the set of morphisms $T\to A$, known 
as \emph{left homotopy}, see \cite[4.2]{DS}.  It is an equivalence 
when $T$ is cofibrant, see~\cite[4.7]{DS}, and then $\pi^{\ell}(T,A)$ 
denotes the set of equivalence classes.  
 \end{chunk}

  \begin{lemma}
    \label{hoch:uniqueness}
There is a DG algebra resolution $K\to T\to S$ of $\sigma$ with 
$T$ cofibrant.

If $K\to A\xra{\alpha} S$ is a flat DG algebra resolutions of~$\sigma$, then there 
is a morphism of resolutions $\omega\col T\to A$.  Any morphism 
of resolutions $\varpi\col T\to A$ is left homotopic to $\omega$, and 
the natural equivalences defined in Lemma 
\emph{\ref{hoch:existence}} satisfy
  \[
[\omega,\op\omega]=[\varpi,\op\varpi]\col
\Rhom{\env T}S{-\dtensor K-}\to\Rhom{\env A}S{-\dtensor K-}
  \]
   \end{lemma}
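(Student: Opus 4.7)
The plan is to invoke the model category structure of~\ref{DGAres:category} for the first three assertions, and to establish the final equality by a cylinder-object argument combined with the functoriality statement in Lemma~\ref{hoch:existence}.

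For assertion (1), the construction of~\ref{hoch:res}---inductively adjoining noncommuting variables to $K$---produces a flat DG algebra resolution $K\to T\to S$ with $T\nat$ the tensor algebra of a non-negatively graded free $K$-module; such $T$ is cofibrant by~\ref{DGAres:category}(3). For the existence of $\omega$, observe that $\alpha$ is surjective in all degrees: trivially for $n>0$ since $S_{n}=0$, and for $n=0$ because $A_{0}$ surjects onto $\HH 0A\cong S$. Hence $\alpha$ is an acyclic fibration, and applying the lifting property of the cofibration $K\to T$ against $\alpha$ to the structure map $\tau\col T\to S$ produces $\omega\col T\to A$ with $\alpha\omega=\tau$. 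For the left-homotopy assertion, any other lift $\varpi$ satisfies $\alpha\varpi=\tau=\alpha\omega$; every DG $K$-algebra is fibrant in the model structure of~\ref{DGAres:category} (the terminal object being the zero DG algebra, onto which every DG algebra surjects), so the standard fact that a weak equivalence between fibrant objects induces a bijection on left-homotopy classes out of any cofibrant object yields that $\alpha_{*}\col\pi^{\ell}(T,A)\to\pi^{\ell}(T,S)$ is bijective. Thus $\omega$ and $\varpi$ represent the same class and are left homotopic.

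For the equality $[\omega,\op\omega]=[\varpi,\op\varpi]$, I would choose a cylinder object $T\amalg_{K}T\xra{(i_{0},i_{1})}C\xra{p}T$ for $T$ in the slice category of DG $K$-algebras augmented to $S$, so that $C$ is a flat DG $K$-algebra, $p$ is a weak equivalence, and $\tau p\col C\to S$ is the augmentation. Lifting the left homotopy to this slice category supplies a morphism of resolutions $h\col C\to A$ with $hi_{0}=\omega$ and $hi_{1}=\varpi$. Applying the functoriality assertion of Lemma~\ref{hoch:existence} to the factorizations $\omega=hi_{0}$, $\varpi=hi_{1}$, and $\id^{T}=pi_{j}$ then gives
\[
[\omega,\op\omega]=[h,\op h]\circ[i_{0},\op{i_{0}}],\qquad
[\varpi,\op\varpi]=[h,\op h]\circ[i_{1},\op{i_{1}}],
\]
along with $[p,\op p]\circ[i_{j},\op{i_{j}}]=[\id^{T},\op{\id^{T}}]=\id$ for $j=0,1$. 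Since Lemma~\ref{hoch:existence} guarantees that $[p,\op p]$ is a natural equivalence, it admits a unique right inverse, forcing $[i_{0},\op{i_{0}}]=[p,\op p]^{-1}=[i_{1},\op{i_{1}}]$, and hence $[\omega,\op\omega]=[\varpi,\op\varpi]$.

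The main obstacle is producing the cylinder $C$ with all the features needed to invoke Lemma~\ref{hoch:existence}---namely $K$-flatness of $C$, the weak equivalence $p$, and an $S$-augmentation with respect to which $h$ is a morphism of resolutions. A cylinder of the form $T\otimes_{K}\Omega$, where $\Omega$ is a flat ``interval'' DG $K$-algebra equipped with two quasi-inverse augmentations $\Omega\to K$, secures the flatness and the weak equivalence; refining the left homotopy of the previous step inside the slice category of DG $K$-algebras over $S$ (which inherits a model structure from~\ref{DGAres:category} in which $T$ remains cofibrant and $A$ remains fibrant) produces the augmentation-preserving $h$.
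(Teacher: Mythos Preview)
Your proof is correct and follows the same blueprint as the paper's: obtain existence and the left homotopy from the model structure, then derive the equality from a cylinder object via the composition law in Lemma~\ref{hoch:existence}. The one noteworthy difference is in how the cylinder is deployed. You insert it simultaneously in both tensor factors, arguing directly with $[h,\op h]$, $[i_j,\op{i_j}]$, and $[p,\op p]$. The paper instead treats the two factors one at a time: it first shows $[\omega,\id^{\op T}]=[\varpi,\id^{\op T}]$ by running the cylinder only through the left slot (keeping the right slot fixed at $\op T$), then symmetrically establishes $[\id^A,\op\omega]=[\id^A,\op\varpi]$, and finally composes. This is precisely why Lemma~\ref{hoch:existence} was set up in the generality of \emph{independent} resolutions of $\sigma$ and $\op\sigma$---the paper remarks just before that lemma that this extra generality is needed here. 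Your route shows that it is not strictly necessary, at the modest cost of ensuring that $C$ carries an $S$-augmentation making $h$ a morphism of resolutions, which your slice-category argument supplies. Both versions need $C$ to be $K$-flat; this is implicit in the paper as well, and holds because a cylinder for the cofibrant $T$ may be taken cofibrant.
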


\begin{proof}
Being both a fibration and a weak equivalence, $\alpha$ is, by definition,
an \emph{acyclic fibration}.  The existence of $\omega$ comes
from a defining property of model categories---the \emph{left
lifting property} of cofibrations with respect to acyclic fibrations;
see axiom MC4(i) in \cite[3.3]{DS}.  Composition with $\alpha$ induces
a bijection $\pi^{\ell}(T,A)\to\pi^{\ell}(T,S)$, see \cite[4.9]{DS},
so $\alpha\varpi=\alpha\omega$ implies that $\varpi$ and $\omega$ are
left homotopic.

By \cite[4.3,\,4.4]{DS}, the homotopy relation produces a 
commutative diagram
   \[
 \xymatrixrowsep{1.3pc}\xymatrixcolsep{3pc}\xymatrix{ 
& T \ar@{->}[d]^{\iota}\ar@/^1pc/[dr]^-{\omega}
   \\
\ T \ar@{=}[ur]\ar@{=}[dr] 
& \ar@{->}[l]_{\rho}^-{\simeq}C\ar@{->}[r]^-{\chi}
& A
   \\
& T \ar@{->}[u]_{\iota'}\ar@/_1pc/[ur]_-{\varpi}
}
   \]
of DG $K$-algebras, with a quasi-isomorphism $\rho$.  It induces a 
commutative diagram
 \[
 \xymatrixrowsep{2pc}\xymatrixcolsep{3pc}\xymatrix{
& T\otimes_K\op T
 \ar@{->}[d]^{\iota\otimes_K\op T}\ar@/^1.5pc/[dr]^-{\omega\otimes_K\op T}
   \\
T\otimes_K\op T \ar@{=}[ur]\ar@{=}[dr] 
 & C\otimes_K\op T
\ar@{->}[r]^-{\chi\otimes_K\op T}
\ar@{->}[l]_-{\rho\otimes_K\op T}^-{\simeq}
 & A\otimes_K\op T
   \\
& T\otimes_K\op T
\ar@{->}[u]_{\iota'\otimes_K\op T}
\ar@/_1.5pc/[ur]_-{\varpi\otimes_K\op T}
 }
   \]
of morphisms of DG $K$-algebras, where $\rho\otimes_K\op T$ is a
quasi-isomorphism because $\op T$ is $K$-flat.  The diagram above 
yields the following chain of equalities:
  \begin{align*}
[\omega,\id^{\op T}]
=[\chi,\id^{\op T}][\iota,\id^{\op T}]
=[\chi,\id^{\op T}][\rho,\id^{\op T}]^{-1}
=[\chi,\id^{\op T}][\iota',\id^{\op T}]
=[\varpi,\id^{\op T}].
 \end{align*}

A similar argument shows that the morphisms
$\op\omega$ and $\op\varpi$ are left homotopic, and yields
$[\id^A,\op\omega]=[\id^A,\op{\varpi}]$. Assembling these
data, one obtains
 \[
[\omega,\op\omega]
= [\id^A,\op\omega][\omega,\id^{\op T}]
= [\id^A,\op{\varpi}][\varpi,\id^{\op T}]
=[\varpi,\op\varpi]\,.  \qedhere
 \]
\end{proof}

 \stepcounter{theorem}
  \begin{proof}[Proof of Theorem~\emph{\ref{hoch:main}}]
Choose a DG algebra resolution $K\to T\to S$ of $\sigma$ with $T$
cofibrant, either by noting that the one in \ref{hoch:res} has this
property by \ref{DGAres:category}, or referring to a defining property of 
model categories; see axiom MC5(i) in \cite[3.3]{DS}.

For each flat DG algebra resolution $K\to A\to S$ of $\sigma$, form the
flat DG algebra resolution $K\to\op A\to\op S$ of $\op\sigma$, and 
define a functor
  \begin{equation}
    \label{eq:derivedHoch}
\Rhom{\env A}S{-\dtensor K-}\col
\dcat\times\dcat[\op S]\to\dcat[S^{\sfc}]
  \end{equation}
by applying Lemma \ref{hoch:existence} with $A'=\op A$.  As $T$ is
cofibrant, Lemma \ref{hoch:uniqueness} provides a morphism of resolutions
$\omega\col T\to A$, and shows that it defines a natural equivalence
  \[
[\omega,\op\omega]\col 
\Rhom{\env T}S{-\dtensor K-}\to\Rhom{\env A}S{-\dtensor K-}\,;
  \]
that does not depend on the choice of $\omega$; set
$\omega^{A}_T=[\omega,\op\omega]$.

When $K\to U\to S$ also is a flat DG algebra resolution of $\sigma$
with $U$ cofibrant, one gets morphisms of resolutions $\tau\col T\to U$
and $\theta\col U\to A$.  Both $\theta\tau\col T\to A$ and $\omega$
are morphisms of resolutions, so Lemmas \ref{hoch:uniqueness} and
\ref{hoch:existence} yield
  \[
\omega^{A}_T=[\omega,\op\omega]
=[\theta\tau,\op\theta\op\tau]
=[\theta,\op\theta][\tau,\op\tau]
=\omega^{A}_U\omega^{U}_T\,.
  \]

For each flat DG algebra resolution $K\to B\to S$ of $\sigma$ set
  \begin{equation}
    \label{eq:nattransf2}
\omega^{AB}:=\omega^{B}_T(\omega^{A}_T)^{-1}\col
\Rhom{\env A}S{-\dtensor K-}\to\Rhom{\env B}S{-\dtensor K-}\,.
  \end{equation}
One clearly has $\omega^{AC}=\omega^{BC}\omega^{AB}$, and 
$\omega^{AB}$ is independent of $T$, because
  \[
\omega^{B}_T(\omega^{A}_T)^{-1}
=\omega^{B}_U\omega^{U}_T(\omega^{A}_U\omega^{U}_T)^{-1}
=\omega^{B}_U\omega^{U}_T(\omega^{U}_T)^{-1}(\omega^{A}_U)^{-1}
=\omega^{B}_U(\omega^{A}_U)^{-1}\,.
  \]
It follows that $\omega^{AB}$ is the desired canonical natural equivalence.
  \end{proof}

We proceed with a short discussion of other derived Hochschild functors.
The proof of the next result is omitted, as it parallels that of
Theorem \ref{hoch:main}. 

  \begin{theorem}
   \pushQED{\qed}
    \label{hoch:main2}
Any flat DG algebra resolution $K\to A\to S$ of $\sigma$  
defines a functor 
\[
A\otimes_{\env A} {\Rhom K--}
\col\dcat^{\mathsf{op}}\times\dcat\to\dcat[S^{\sfc}]\,.
\]
For each flat DG algebra resolution $K\to B\to S$ of $\sigma$ one has a 
canonical equivalence
  \[
\omega_{BA}\col B\otimes_{\env B}{\Rhom K--}\xra{\,\simeq\,}
A\otimes_{\env A}{\Rhom K--}
  \]
of functors, and every flat DG algebra resolution $K\to C\to S$ of
$\sigma$ satisfies
  \[
\omega_{CA}=\omega_{BA}\omega_{CB}\,.
 \qedhere
  \]
    \end{theorem}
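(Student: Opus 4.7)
The plan is to parallel the four-step structure of the proof of Theorem~\ref{hoch:main}, exchanging the roles of $\dtensor K$ with $\Rhom K$ on the inside and $\Rhom{\env A}S$ with $A\otimes_{\env A}$ on the outside. \textbf{First,} for each flat DG algebra resolution $K\to A\to S$ of $\sigma$, define the functor on objects by
\[
(L,N)\longmapsto \projres_{\env A}(A)\otimes_{\env A}\Hom{K}{\projres_A(L)}{\injres_A(N)},
\]
where $\projres_{\env A}(A)\xra{\,\simeq\,}A$ is a chosen semiprojective resolution of $A$ as a DG $\env A$-module. The $S^{\sfc}$-action on $A$ commutes with the $\env A$-action and descends to the tensor product, so the output lies in $\dcat[S^{\sfc}]$. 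Functoriality in $L$ (contravariant) and $N$ (covariant) follows from the uniqueness up to homotopy of lifts between semiprojective resolutions and extensions between semiinjective ones, exactly as in Lemma~\ref{hoch:existence}.

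\textbf{Second,} for another flat resolution $K\to B\to S$ and a morphism of resolutions $\omega\col A\to B$, construct a comparison map following Construction~\ref{hoch:comparison}. Lifting $\id^L$ and $\id^N$ across the two systems of resolutions produces an $\env A$-linear quasi-isomorphism
\[
(\env\omega)^{*}\Hom K{\projres_B(L)}{\injres_B(N)}\xra{\,\simeq\,}\Hom K{\projres_A(L)}{\injres_A(N)},
\]
and the semiprojective lifting property applied to the $\env A$-linear quasi-isomorphism $A\to(\env\omega)^{*}B$ yields a compatible map $\projres_{\env A}(A)\to(\env\omega)^{*}\projres_{\env B}(B)$. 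A standard change-of-rings manipulation assembles these into a natural transformation
\[
[\omega]\col B\otimes_{\env B}\Rhom K{-}{-}\longrightarrow A\otimes_{\env A}\Rhom K{-}{-}.
\]
Because $A\nat$ and $B\nat$ are $K$-flat, $\env\omega$ is a quasi-isomorphism, and a semiprojective analog of Lemma~\ref{lem:change} on the tensor side then shows that $[\omega]$ is a natural equivalence.

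\textbf{Third,} upgrade $[\omega]$ to a canonical equivalence by invoking the cofibrant resolution $K\to T\to S$ of Lemma~\ref{hoch:uniqueness}. Any two morphisms of resolutions $T\to A$ are left homotopic, and the homotopy-cylinder argument at the end of Lemma~\ref{hoch:uniqueness}, together with the requisite $K$-flatness, shows that the induced equivalences agree; denote the common value by $\omega_T^A$. Setting $\omega_{BA}:=\omega_T^A\circ(\omega_T^B)^{-1}$ and reproducing the short calculation that closes the proof of Theorem~\ref{hoch:main} then gives both the independence of $T$ and the cocycle identity $\omega_{CA}=\omega_{BA}\omega_{CB}$.

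The main obstacle is the semiprojective analog of Lemma~\ref{lem:change}: restriction of a semiprojective DG module along a quasi-isomorphism $\env\omega$ of $K$-flat DG algebras must, after tensoring with $\projres_{\env A}(A)$ respectively $\projres_{\env B}(B)$, descend to an isomorphism in $\dcat[S^{\sfc}]$. This follows by a dual of the factorization argument in the proof of Lemma~\ref{lem:change}, using $\env B\otimes_{\env A}(-)$ in place of $\Hom{\env A}{\env B}{-}$, but securing it cleanly is the one place where care is required. Once this input is in place, the remainder is formal bookkeeping modeled on the proof of Theorem~\ref{hoch:main}.
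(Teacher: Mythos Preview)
Your proposal is correct and takes exactly the approach the paper indicates: the paper omits the proof entirely, stating only that it ``parallels that of Theorem~\ref{hoch:main},'' and you have carried out precisely that parallel, correctly identifying the tensor-side analog of Lemma~\ref{lem:change} as the one place requiring care. One small imprecision worth fixing: $S^{\sfc}$ does not act on~$A$ as a DG $\env A$-module, but since $\alpha\colon A\to S$ is a quasi-isomorphism of DG $\env A$-modules you may replace $\projres_{\env A}(A)$ by $\projres_{\env A}(S)$, and then the $S^{\sfc}$-action on~$S$ (which commutes with that of $\env A$, exactly as in Construction~\ref{hoch:comparison}) passes directly to the tensor product.
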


  \begin{remark}
   \label{hoch:homology}
We fix a DG algebra resolution $K\to A\to S$ of $\sigma$ and let
  \[
S\dtensor{S\dtensor K\op S}{\Rhom K--}\col\dcat^{\mathsf{op}}\times\dcat
\to\dcat[S^{\sfc}]
  \]
denote the functor $A\otimes_{A\otimes_{K}A}{\Rhom K--}$: The preceding
theorem shows that it is independent of the choice of $A$.  For all
$M,N\in\dcat$ it defines \emph{derived Hochschild homology modules}
of the $K$-algebra $S$ with Hom-decomposable coefficients:
  \[
\Tor n{S\dtensor K\op S}S{\Rhom KMN} =\CH n{S\dtensor{S\dtensor K\op
S}{\Rhom KMN}}\,.
  \]
These modules are related to classical Hochschild homology: 

For all $S$-modules $M$ and $N$ there are canonical natural maps
 \[
\Tor n{S\otimes_{K}\op S}S{\Hom KMN}\to\hch nKS{\Hom KMN}
  \]
of $S^{\sfc}$-modules, where the modules on the left are the classical 
ones, see \ref{DGoverRings}. They are isomorphisms when $S$ is $K$-flat;
see \cite[IX, \S6]{CE}. When $M$ is $K$-projective there exist natural homomorphisms 
 \begin{align*}
\Tor n{\alpha\otimes_K\op\alpha}S{\Rhom KMN}\col\qquad\qquad&{\ }\\
\Tor n{S\dtensor K\op S}S{\Rhom KMN}&\to
\Tor n{S\otimes_K\op S}S{\Hom KMN}
  \end{align*}
When $S$ is $K$-flat the composition $K\to S\xra{=}S$ is a flat DG resolution 
of $\sigma$ and $\alpha\col A\to S$ is a morphism of resolutions, so the 
theorem shows that the maps above are isomorphisms.
   \end{remark}

The remaining two composed functors collapse in a predictable way. 

\begin{remark}
  \label{DGAres:prereduction}
Similarly to Theorems \ref{hoch:main} and \ref{hoch:main2}, one can 
define functors
  \begin{align*}
\Rhom{S\dtensor K{\op S}}S{\Rhom K--}&\col
\dcat^{\mathsf{op}}\times\dcat\to\dcat[S^{\sfc}]\,,
  \\
S\otimes_{S\dtensor K{\op S}}{(-\dtensor K-)}&\col\dcat\times\dcat
\to\dcat[S^{\sfc}]\,,
  \end{align*}
that do not depend on the choice of the DG algebra
resolution $A$.  However, this is not necessary, as for all 
$M,N\in\dcat$ there exit canonical isomorphisms
  \begin{align}
     \label{eq:prereduction11}
\Rhom{S\dtensor K\op S}S{\Rhom KMN}&\simeq\Rhom SMN\,,
 \\
     \label{eq:prereduction21}
S\otimes_{S\dtensor K{\op S}}{(M\dtensor KN)}&\simeq M\dtensor SN\,.
  \end{align}
They are derived extensions of classical reduction results, 
\cite[IX.2.8, IX.2.8a]{CE}.
  \end{remark}

We finish with a comparison of the content of this section and that of
\cite[\S2]{YZ1}.

  \begin{remark}
    \label{rem:YZ}
When $M=N$ the statement of Theorem~\ref{hoch:main} bears a close
resemblance to results of Yekutieli and Zhang, see \cite[2.2, 2.3]{YZ1}.
One might ask whether their proof can be adapted to handle the general
case.

Unfortunately, even in the special case above the argument for
\cite[Theorem 2.2]{YZ1} is deficient.  It utilizes the mapping cylinder
of morphisms $\phi_0,\phi_1\col\tilde M\to M$ of DG modules over a
DG algebra,~$\tilde B$.  On page 3225, line 11, they are described
as ``the two $\tilde B'$-linear quasi-isomorphisms $\phi_0$ and
$\phi_1$'' where $\tilde B'$ is a DG algebra equipped with \emph{two}
homomorphisms of DG algebras $u_0,u_1\col\tilde B'\to\tilde B$; with
this, an implicit choice is being made between $u_0$ and $u_1$.  Such a choice 
compromises the argument, whose goal is to establish an equality 
$\chi_0=\chi_1$ between morphism of complexes $\chi_i$, which have
already been constructed by using $\phi_i$ and $u_i$ for $i=0,1$.

The basic problem is that the relation between various choices
of comparison \emph{morphisms of DG algebra resolutions} is not 
registered in the additive environment of derived categories. 
In the proof of Theorem \ref{hoch:main} it is
solved by using the homotopy equivalence provided by a model structure
on the category of DG algebras. 
  \end{remark}

\section{Reduction of derived Hochschild functors over algebras}
\label{Reduction over algebras}

Let $\sigma\col K\to S$ be a homomorphism of \emph{commutative} rings.  

Recall that $\sigma$ is said to be \emph{essentially of finite type} 
if it can be factored as 
  \[
K\hra K[x_1,\dots,x_d]\to V^{-1}K[x_1,\dots,x_d]\tra S\,,
  \]
where $x_1,\dots,x_d$ are indeterminates, $V$ is a multiplicatively
closed subset, the first two maps are canonical, and the third one is
a surjective ring homomorphism.

The following theorem, which is the main algebraic result in the paper,
involves the relative dualizing complex $D^\sigma$ described in
\eqref{eq:reldual}.

\begin{theorem}
\label{thm:reduction2}
If $\fd_KS$ is finite, then in $\dcat$ there are isomorphisms
 \begin{align}
  \label{eq:reduction21}
\Rhom{S\dtensor{K}S}S{M\dtensor KN}
&\simeq\Rhom S{\Rhom SM{D^{\sigma}}}N
  \\
  \label{eq:reduction22}
\Rhom{S\dtensor{K}S}S{\Rhom SM{D^\sigma}\dtensor KN}
&\simeq\Rhom SMN
            \end{align}
for all $M\in\dcatf{\sigma}$ and $N\in\dcat$; these morphisms are natural 
in $M$ and $N$. 
 \end{theorem}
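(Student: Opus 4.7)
The plan is to first derive \eqref{eq:reduction22} from \eqref{eq:reduction21}, and then prove \eqref{eq:reduction21} by reducing to a computation over the essentially smooth layer provided by Theorem~\ref{thm:uniqueness}. For the reduction of \eqref{eq:reduction22}, set $M^{\vee}:=\Rhom S M{D^\sigma}$. Theorem~\ref{thm:reduction1}(1) provides $M^{\vee}\in\dcatf\sigma$ and the biduality isomorphism $M\simeq\Rhom S{M^{\vee}}{D^\sigma}$. Applying \eqref{eq:reduction21} with $M$ replaced by $M^{\vee}$ and invoking biduality yields
\[
\Rhom{S\dtensor{K}S}S{M^{\vee}\dtensor K N}\simeq\Rhom S{\Rhom S{M^{\vee}}{D^\sigma}}{N}\simeq\Rhom S M N,
\]
which is \eqref{eq:reduction22}.

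For \eqref{eq:reduction21}, fix a factorization $K\to P\to S$ of $\sigma$ with $K\to P$ essentially smooth of relative dimension $d$ and $P\to S$ finite. Theorem~\ref{thm:uniqueness} identifies $D^\sigma\simeq\Rhom P S{\Omega}$ in $\dcat$, with $\Omega:=\Shift^d\Omega^d_{P|K}$ an invertible shifted $P$-module. Standard adjunction along $P\to S$ rewrites the right-hand side of \eqref{eq:reduction21} as $\Rhom S{\Rhom P M{\Omega}}{N}$. For the left-hand side, Theorem~\ref{hoch:main} lets me realize $\Rhom{S\dtensor{K}S}S{-}$ as $\Rhom{\env A}{S}{-}$ for any flat DG algebra resolution $K\to A\to S$ of $\sigma$. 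I choose $A$ to be a flat commutative DG $P$-algebra resolution of $S$; since $P$ is $K$-flat, so is $A$, and there results a natural diagram of DG algebras $\env P\to\env A\to\env S$ acting compatibly on $S$.

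The core of the proof is the construction of a natural chain of derived isomorphisms linking $\Rhom{\env A}{S}{M\dtensor K N}$ to $\Rhom S{\Rhom P M{\Omega}}{N}$. The essential ingredients are: (i)~the decomposition $\env A\simeq A\dtensor{P}\env P\dtensor{P}A$, available because $P$ is $K$-flat and $A$ is $P$-flat, which allows one to transfer the computation from $\env A$ down to $\env P$; (ii)~the standard adjunctions \eqref{eq:isos1} and \eqref{eq:isos2}, used to split $M\dtensor K N$ in a way compatible with the $\env P$-action and to absorb $M$ into the Hom on the $\env P$-side; (iii)~the Hochschild--Kostant--Rosenberg-type isomorphism of Lemma~\ref{cor:smooth}, $\Rhom{\env P}{P}{\env P}\simeq\Shift^{-d}\Hom P{\Omega^d_{P|K}}{P}$, which is the unique place where the invertible module $\Omega$ and the shift $\Shift^d$ enter the calculation; and (iv)~the finiteness inputs $\pd_{\env P}P<\infty$ from \ref{smooth:criteria}, $\pd_P M<\infty$ from Lemma~\ref{smooth:fpd}, and $\fd_K S<\infty$ by hypothesis, which guarantee that every derived functor in sight is well-behaved.

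The principal obstacle will be the careful coordination of left- and right-DG-module structures through the chain of adjunctions, especially at the change-of-rings step from $\env A$ to $\env P$: since $\env P\to\env A$ is not a quasi-isomorphism, one must verify the derived base-change identity explicitly using $\pd_{\env P}P<\infty$ together with the $K$-flatness of $P$. Naturality in $M\in\dcatf\sigma$ and $N\in\dcat$ follows from naturality of each adjunction together with the canonical independence of the DG resolution $A$ guaranteed by Theorem~\ref{hoch:main}---this coherence problem is precisely what the DG framework of Section~\ref{DG derived categories} was set up to handle.
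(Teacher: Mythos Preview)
Your proposal is correct and follows essentially the same strategy as the paper's proof: the reduction of \eqref{eq:reduction22} to \eqref{eq:reduction21} via Theorem~\ref{thm:reduction1}(1) is exactly what the paper does at the end, and your ingredients (i)--(iv) match the paper's Steps~\ref{chooseDiagram}--\ref{represents-left}, which build the chain by passing through an essentially smooth $P$, a graded-commutative DG $P$-algebra resolution $B\to S$ (your $A$), the identification $\env B\cong B\otimes_P\env P\otimes_PB$, and the isomorphism of Lemma~\ref{cor:smooth}. The paper makes your change-of-rings step concrete by introducing an auxiliary finite DG $\env P$-resolution $A\to P$ and the intermediate DG algebra $C=\env B\otimes_{\env P}A\xra{\simeq}B\otimes_PB$, then running the adjunctions through $C$; this is precisely the ``explicit verification'' you anticipate as the principal obstacle.
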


We record a useful special case, obtained by combining Theorems
\ref{thm:reduction2} and \ref{thm:uniqueness}:

\begin{corollary}
Assume that $\sigma$ is flat, and let $K\to P\to S$ be a factorization
of $\sigma$ with $K\to P$ essentially smooth of relative dimension $d$
and $P\to S$ finite.

If $M$ is a finite $S$-module that is flat over $K$, and $N$ is an 
$S$-module, then for each $n\in\BZ$ there is an isomorphism 
of $S$-modules
\begin{xxalignat}{3}
&{\phantom{\square}}
&\Ext n{S\otimes_KS}S{M\otimes_KN}&\cong
\Ext{n-d}S{\Rhom PM{\Omega_{P|K}^d}}N\,.
&&\qed
\end{xxalignat}
\end{corollary}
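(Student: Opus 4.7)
The plan is to derive this corollary by directly combining Theorem~\ref{thm:reduction2} and Theorem~\ref{thm:uniqueness}, after checking that the hypotheses of the former are met and that the derived tensor and Hom simplify to the ordinary ones under the present flatness assumptions.

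First I would verify the input to Theorem~\ref{thm:reduction2}. Since $\sigma$ is flat, $\fd_K S = 0$, so Theorem~\ref{thm:reduction2} applies. Since $M$ is a finite $S$-module that is flat over $K$, viewed as a complex concentrated in degree~$0$ it is an object of $\dcatf{\sigma}$. Flatness of $S$ over $K$ identifies $S\dtensor{K}S$ with $S\otimes_{K}S$, and flatness of $M$ over $K$ identifies $M\dtensor{K}N$ with $M\otimes_{K}N$. Thus the isomorphism \eqref{eq:reduction21} specializes to
\[
\Rhom{S\otimes_{K}S}S{M\otimes_{K}N} \simeq \Rhom S{\Rhom SM{D^{\sigma}}}N \quad\text{in}\quad \dcat.
\]

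Next I would rewrite the right-hand side using Theorem~\ref{thm:uniqueness}, which gives $D^{\sigma}\simeq\Shift^{d}\Rhom PS{\Omega^{d}_{P|K}}$ in $\dcat$. Applying the standard Hom-tensor adjunction \eqref{eq:isos1} along the map $P\to S$, one has
\[
\Rhom SM{D^{\sigma}} \simeq \Shift^{d}\Rhom SM{\Rhom PS{\Omega^{d}_{P|K}}} \simeq \Shift^{d}\Rhom PM{\Omega^{d}_{P|K}},
\]
where the second isomorphism uses $S\dtensor S M\simeq M$. Substituting and passing to the $n$-th cohomology (noting that a shift by $\Shift^{d}$ in the first argument of $\Rhom S{-}{-}$ shifts $\Ext{*}{S}{-}{-}$ down by $d$) yields the claimed isomorphism
\[
\Ext n{S\otimes_{K}S}S{M\otimes_{K}N} \cong \Ext{n-d}S{\Rhom PM{\Omega^{d}_{P|K}}}N.
\]

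There is no serious obstacle here: the corollary is essentially a bookkeeping exercise combining the two theorems, with the flatness hypotheses ensuring that all derived functors collapse to their underived counterparts on the left-hand side, and with the Hom-tensor adjunction collapsing the iterated $\Rhom$ on the right-hand side. The only minor point to watch is the sign/shift convention when extracting $\Ext n{}-{-}$ from a shifted $\Rhom$.
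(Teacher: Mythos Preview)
Your proof is correct and follows exactly the approach indicated in the paper, which simply states that the corollary is obtained by combining Theorems~\ref{thm:reduction2} and~\ref{thm:uniqueness}. You have correctly supplied the routine details: the flatness hypotheses collapse the derived tensor products, the adjunction \eqref{eq:isos1} identifies $\Rhom SM{D^{\sigma}}$ with $\Shift^{d}\Rhom PM{\Omega^{d}_{P|K}}$, and the shift in the first argument of $\Rhom S{-}{N}$ produces the index shift $n\mapsto n-d$ upon passing to $\Ext$.
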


Before the proof of Theorem~\ref{thm:reduction2} we make a couple of 
remarks.

  \begin{chunk}
    \label{crossover}
For all complexes of $P$-modules $L$, $X$, and $J$ there is a
natural morphism
  \[
\Hom P{L}P\otimes_PX\otimes_PJ\lra
\Hom P{\Hom P{X}{L}}{J}
   \]
defined by the assignment $\lambda\otimes x\otimes j\mapsto
\big(\chi\mapsto(-1)^{(|x|+|j|)|\lambda|}\lambda\chi(x)j\big)$.
This morphism is bijective when $L\nat$ and $X\nat$ are finite 
projective:  This is clear when $L$ and $X$ are shifts of $P$.
The case when they are shifts of projective modules follows, as the 
functors involved commute with finite direct sums.  The general case
is obtained by induction on the number of the degrees in 
which $L$ and $X$ are not zero.  
   \end{chunk}

\begin{chunk}
 \label{CDGA}
A DG algebra $A$ is called \emph{graded-commutative} if $ab=(-1)^{|a||b|}ba$ 
holds for all $a,b\in A$.  The identity map $\op A\to A$ then is a morphism of DG 
algebras, so each DG $A$-module is canonically a DG module over $\op A$, and
for all $A$-modules $M$ and $N$ the complexes $\Rhom AMN$ and $M\dtensor AN$ 
are canonically DG $A$-modules.  

When $A$ and $B$ are graded-commutative DG algebras, then so is $A\otimes_KB$, 
and the canonical isomorphisms in \eqref{eq:isos1} and \eqref{eq:isos2} represent 
morphisms in $\dcat[A\otimes_KB]$.
  \end{chunk}

\begin{proof}[Proof of  Theorem \emph{\ref{thm:reduction2}}]
The argument proceeds in several steps, with notation introduced 
as needed.  It uses chains of quasi-isomorphisms that involve a 
number of auxiliary DG algebras and DG modules.  We start 
with the DG algebras.

\begin{step}
 \label{chooseDiagram}
There exists a commutative diagram of morphisms of DG $K$-algebras
\begin{equation*}
 \begin{gathered}
   \xymatrixrowsep{3pc} 
   \xymatrixcolsep{.8pc} \xymatrix{
&&&&K \ar@{->}[dlll]\ar@{->}[drr]_-{\varkappa}\ar@{->}[rrrr]^-{\sigma}&&&& S
\\
&{\env P}\ar@{->}[d]_{\env\eta}\ar@{->}[drrr]_-{\iota}\ar@{->>}[rrrrr]_-{\mu} 
&&&&& P \ar@{->}[drr]_-{\eta}\ar@{->}[d]_(.65){\env\eta\otimes_{\env P}P}\ar@{->}[urr]_-{\pi}
    \\
&{\env B} \ar@{->>}[drrr]_(.35){\env B\otimes_{\env P}\iota} 
&&&A \ar@{->}[d]_(.25){\env\eta\otimes_{\env P}A}\ar@{->>}[urr]^(.35){\simeq}_(.25){\alpha}
&&{\env B}\otimes_{\env P}P \ar@{->}[drr]^(.37){\cong}_(.47){\nu}
&&B \ar@{->>}[uu]^-{\simeq}_-{\beta}
     \\
&&&& {\mathstrut^{\mathstrut}{\env B}}\otimes_{\env P}A
\ar@{->>}[urr]^(.67){\simeq}_(.55){\env B\otimes_{\env P}\alpha}\ar@{=}[r]
&C\ar@{->>}[rr]^-{\simeq}_-{\gamma} 
&&{\ov C}\ar@{=}[r]
&{\mathstrut^{\mathstrut} B}\otimes_PB\ar@{->>}[u]_{\mu'}}
 \end{gathered}
\end{equation*}
where $\simeq$ flags quasi-isomorphisms and $\twoheadrightarrow$ tips surjections.
The morphisms appearing in the diagram are constructed in the following sequence:
 \end{step}

Fix  a factorization $K\xra{\,\varkappa\,}P\xra{\,\pi\,}S$ of $\sigma$,
with $\varkappa$ essentially smooth of relative dimension $d$ and $\pi$
finite.

Set $\env P=P\otimes_KP$ and let $\mu\col\env P\to P$ denote the
multiplication map, and note that the projective dimension $\pd_{\env P}P$
is finite by \ref{smooth:criteria}.

Choose a graded-commutative DG
algebra resolution $\env P\xra{\iota}A\xra{\alpha}P$ of $\mu$ with
$A_0=\env P$, each $A_i$ a finite projective $\env P$-module, and $\sup
A=\pd_{\env P}P$; see \cite[2.2.8]{Av}.

Choose a graded-commutative DG algebra
resolution $P\xra{\eta}B\xra{\pi}S$ of $\sigma$, with $B_0$ a finite
free $P$-module and each $B_i$ a finite free $P$-module; again,
see \cite[2.2.8]{Av}.

Set $\env B=B\otimes_KB$ and let $\mu'\col B\otimes_PB\to B$ be 
the multiplication map.

Let $\nu\col{\env B}\otimes_{\env P}P\to B\otimes_PB$ be the map 
$b\otimes b'\otimes p\mapsto(b\otimes b')p$.

Let $\gamma\col{\env B}\otimes_{\env P}A\to B\otimes_PB$ be the 
map $b\otimes b'\otimes a\mapsto(b\otimes b')\alpha(a)$.

The diagram commutes by construction.   
The map $\nu$ is an isomorphism by \eqref{eq:isos2}, and $\env B\otimes_{\env P}\alpha$ 
is a quasi-isomorphism because $\alpha$ is one and $\env B$ is a bounded 
below complex of flat $\env P$-modules.

\medskip

We always specify the DG algebra operating on any newly introduced DG 
module.  On DG modules of homomorphisms and tensor products the
operations are those induced from the arguments of these functors; 
see \ref{CDGA}.

\begin{Notation}
Let $\env P\xra{\simeq} U$ be a semiinjective resolution over $\env P$.

Set $H=\hh{\Hom{\env P}PU}.$
\end{Notation}

\stepcounter{theorem}

\begin{step}
\label{homology}
There exists a unique isomorphism inducing $\id^{H}$ in homology:
 \begin{equation*}
H \simeq \Rhom{\env P}P{\env P}
  \quad\text{in }\dcat[P]\,.
\end{equation*}
\end{step}

 \begin{proof}
The isomorphism $H\cong\Ext{}{\env P}P{\env P}$ of graded $P$-modules
and \eqref{eq:smooth4} show that $H$ is projective, so Lemma 
\ref{homology:projective} applies.
 \end{proof}

\begin{Notation}
Set $L=\Hom P{H}P$.

Let $L\xra{\,\simeq\,}I$ be a semiinjective resolution over $P$.
\end{Notation}

\begin{step}
\label{balance}
There exists an isomorphism $D^\sigma\simeq\Rhom PSI$ in $\dcat$.
  \end{step}

\begin{proof}
Theorem \ref{thm:uniqueness} provides the first isomorphism in the chain
\begin{align*}
D^\sigma
&\simeq\Rhom PS{\Shift^{d}\Omega^d_{P|K}}\\
&\simeq\Rhom PS{\Rhom P{\Rhom{\env P}P{\env P}}P}\\
&\simeq\Rhom PS{\Rhom P{H}P}\\
&\simeq\Rhom PSI\,.
\end{align*}
The remaining ones come from \eqref{eq:smooth5}, Step \ref{homology}, 
and the resolution $L\simeq I$.
  \end{proof}

\begin{Notation}
Let $X'\xra{\,\simeq\,}M$ be a semiprojective resolution over $B$, with
$X'_i$ a finite projective $P$-module for each $i$ and $\inf{X'}=\inf{\hh
M}$, see \ref{semi:existence}; set $q=\pd_PM$, and note that $q$ is 
finite by Lemma \ref{smooth:fpd}.

Set $X=X'/X''$, where $X''_i=X'_i$ for $i>q$, $X''_{q}=\dd(X_{q+1})$,
and $X''_i=0$ for $i<q$.  It is easy to see that $X''$ is a DG
submodule of $X'$, so the canonical map $X'\to X$ is a surjective
quasi-isomorphism of DG $B$-modules.  Since $X'\xra{\,\simeq\,}M$ is a
semiprojective resolution over $P$, each $P$-module $X_i$ is projective;
see \cite[2.4.P]{AF:hd}.

Let $G\xra{\,\simeq\,}\Hom PX{{L}}$ be a semiprojective resolution over $B$.

Let $N\xra{\simeq}J$ be a semiinjective resolution over $B$.

Set $\ul J=\Hom BS{J}$.
\end{Notation}

\begin{step}
 \label{represents-right}
There exists an isomorphism
  \[
\Rhom S{\Rhom SM{D^\sigma}}N \simeq \Rhom B{\Rhom PM{L}}N
       \quad\text{in }\dcat[B]\,.
  \]
\end{step}

\begin{proof}
The map $N\xra{\simeq}J$ induces the vertical arrows in the commutative diagram
\[
\xymatrixrowsep{2pc} \xymatrixcolsep{1pc} \xymatrix{ 
 N \ar@{=}[r]\ar@{..>}[d] & \Hom BSN
 \ar@{->}[rrrr]^-{\cong}_-{\Hom B{\beta}N}\ar@{->}[d] &&&& \Hom BBN
 \ar@{->}[d]^-{\simeq}
 \\
{\ul J}\ar@{=}[r] & \Hom BSJ \ar@{->}[rrrr]^-{\simeq}_-{\Hom B{\beta}J} &&&& \Hom BBJ}
\]
Note that $B$ acts on $N$ through $\beta$, which is surjective, so $\Hom B{\beta}N$ is
bijective.  The map $\Hom B{\beta}J$ is a quasi-isomorphism because $\beta$ is one and
$J$ is semiinjective.  By~\ref{semi:basechange}(2), $\ul J$ is semiinjective, so 
$N\to{\ul J}$ is a semiinjective resolution over $S$.
In the following chain of morphisms of DG $B$-modules the isomorphisms
are adjunctions:
\begin{align*}
 \Hom S{\Hom SM{\Hom PS{I}}}{\ul J}
 &\cong\Hom S{\Hom PM{I}}{\ul J}\\
 &=\Hom S{\Hom PM{I}}{\Hom BSJ}\\
 &\cong\Hom B{S\otimes_S\Hom PM{I}}{J}\\
 &\cong\Hom B{\Hom PM{I}}{J}\\
 &\simeq\Hom B{\Hom P{X'}{I}}{J}\\
 &\simeq\Hom B{\Hom PX{I}}{J}\\
 &\simeq\Hom B{\Hom PX{{L}}}{J}\\
 &\simeq\Hom B{G}{J}
\end{align*}
The quasi-isomorphisms are induced by $M\xla{\simeq}X'\xra{\simeq}X$,
$L\xra{\simeq}I$, and $G\xra{\simeq}X$, because $I$ is semiinjective
over $P$, $J$ is semiinjective over $B$, and $X$ is semiprojective 
over $P$.  The chain yields the desired isomorphism in $\dcat[B]$ as 
$\ul J$ is semi\-injective over $S$, $G$ is semiprojective over $B$, and 
Step \ref{balance} gives $\Hom PS{I}\simeq D^\sigma$.
  \end{proof}

\begin{Notation}
Let $F\xra{\simeq}B$ be a semiprojective resolution over $C$.
\end{Notation}

\begin{step}
\label{injective}
There exists an isomorphism 
  \[
\Rhom B{\Rhom PM{L}}N\simeq \Rhom CB{\Rhom P{\Rhom PM{L}}N}
\quad\text{in }\dcat[C]\,.
  \]
\end{step}

\begin{proof}
The DG ${\ov C}$-module ${\ov C}\otimes_CF$ is semiprojective by~\ref {semi:basechange}(1).
The map $F\xra{\simeq}B$ induces the vertical arrows in the commutative diagram
of DG $C$-modules
\[
\xymatrixrowsep{2pc} \xymatrixcolsep{1.3pc} \xymatrix{
F\ar@{->}[r]^-{\cong} \ar@{->}[d]_-{\simeq}
&C\otimes_{C}F\ar@{->}[rrr]^-{\simeq}_-{\gamma\otimes_{C}F} \ar@{->}[d]_-{\simeq}
 &&& {\ov C}\otimes_{C}F\ar@{->}[d] & \\
B\ar@{->}^-{\cong}[r] 
&C\otimes_{C}B\ar@{->}[rrr]^-{\cong}_-{\gamma\otimes_{C}B} &&& {\ov C}\otimes_{C}B
}
\]
where $\gamma\otimes_{C}B$ is an isomorphism because 
$\gamma $ is surjective and $C$ acts on $B$ through $\gamma$, and 
$\gamma\otimes_{C}F$ is a quasi-isomorphism because $\gamma$ is one and 
$F$ is semiprojective.

The resulting quasi-isomorphism ${\ov C}\otimes_CF\xra{\simeq}B$
induces the quasi-isomorphism in the following chain, because 
$\Hom P{G}{J}$ is semiinjective over ${\ov C}$ by~\ref{semi:basechange}(2):
\begin{align*}
 \Hom B{G}{J}
 &\cong\Hom{{\ov C}}B{\Hom P{G}{J}} \\
 &\simeq\Hom{{\ov C}}{{\ov C}\otimes_CF}{{\Hom P{G}{J}}}\\
 &\cong\Hom{C}{F}{{\Hom P{G}{J}}}\,.
\end{align*}
The first isomorphism reflects the action of ${\ov C}=B\otimes_PB$ on $\Hom P{G}{J}$,
the second one holds by adjunction.  The chain represents the desired isomorphism 
because $\Hom P{G}{J}$ is semiinjective over $C$; see \ref{CDGA}.
  \end{proof}

\begin{Notation}
Let $Y\xra{\simeq}N$ be a semiprojective resolution over $B$.
\end{Notation}

\begin{step}
\label{bidual}
There exists an isomorphism 
\[
\Rhom P{\Rhom PM{L}}N \simeq 
\Rhom{\env P}P{\env P}\dtensor{\env P}{(M\dtensor KN)}
\quad\text{in }\dcat[\env B]\,. 
\]
\end{step}

\begin{proof}
{}From $G\xra{\simeq}\Hom PX{L}$ one gets the first link in the chain
\begin{align*}
 \Hom P{G}J
 &\simeq \Hom P{\Hom PX{{L}}}{J}\\
 &\cong \Hom P{L}P\otimes_PX\otimes_PJ\\
 &= H\otimes_PX\otimes_PJ\\
 &\simeq H\otimes_PX\otimes_PY\\
 &\cong H\otimes_{\env P}(X\otimes_KY)\\
 &\simeq \Hom{\env P}PU\otimes_{\env P}(X\otimes_KY)
\end{align*}
of morphisms of DG ${\ov C}$-modules; it is a quasi-isomorphism
because the semiinjective DG $B$-module $J$ is semiinjective over 
$P$, see~\ref{semi:basechange}(4). 

The equality reflects the definition of $L$.

The composition $Y\xra{\simeq}N\xra{\simeq}J$ induces the third link; which is 
a quasi-isomorphism because $H$ and $X$ are semiprojective over $P$.  

The second isomorphism holds by associativity of tensor products; 
see \ref{eq:isos2}.

The quasi-isomorphism $H\simeq\Hom{\env P}{A}{\env P}$ 
from Step \ref{homology} induces the last link, which is a
quasi-isomorphism because $X\otimes_KY$ is semiflat over $\env P$.

Finally, the semiinjectivity of $\ov J$ and the semiflatness of 
$X\otimes_KY$ imply that the chain above represents the desired 
isomorphism in $\dcat[\env B]$.
  \end{proof}

\begin{step}
\label{theta}
There exists an isomorphism 
  \[
\Rhom{\env P}P{\env P}\dtensor{\env P}{(M\dtensor KN)} \simeq 
\Rhom{\env P}P{M\dtensor KN}
\quad\text{in }\dcat[\env B]\,. 
\]
\end{step}

\begin{proof}
The resolutions $A\xra{\simeq}P\xra{\simeq}U$ over $\env P$ induce 
quasi-isomorphisms
\[
   \Hom{\env P}{P}{U}\simeq
   \Hom{\env P}{A}{U}\simeq
   \Hom{\env P}{A}{\env P}
\]
of complexes of $\env P$-modules, which in turn induce a quasi-isomorphism
 \[
   \Hom{\env P}{P}{U}\otimes_{\env P}(X\otimes_KY)\simeq
   \Hom{\env P}{A}{\env P}\otimes_{\env P}(X\otimes_KY)
\]
of DG $\env B$-modules.  To wrap things up, we use the canonical 
evaluation morphism
\[
\Hom{\env P}{A}{\env P}\otimes_{\env P}(X\otimes_KY)
\to\Hom{\env P}{A}{X\otimes_KY}
\]
given by $\lambda\otimes x\otimes y\mapsto
\big(a\mapsto(-1)^{(|x|+|y|)|a|}\lambda(a)(x\otimes y)\big)$;
it is bijective, because the DG algebra $A$ is a bounded complex of 
finite projective $\env P$-modules.
 \end{proof}

\begin{Notation}
Let $X\otimes_K Y\xra{\simeq}V$ be a semiinjective resolution over $\env B$.
\end{Notation}

\begin{step}
\label{represents-left}
There exists an isomorphism
\[
\Rhom CB{\Rhom{\env P}P{M\dtensor KN}}\simeq  \Rhom{\env B}S{M\dtensor KN}
\quad\text{in }\dcat[\env B]\,. 
\]
\end{step}

\begin{proof}
The isomorphisms below come from adjunction formulas, see \eqref{eq:isos1}:
\begin{align*}
 \Hom{C}{F}{\Hom{\env P}{A}{X\otimes_KY}}
 &\cong\Hom{\env B}{{F}\otimes_{A}{A}}{X\otimes_KY}\\
 &\cong\Hom{\env B}{F}{X\otimes_KY}\\
 &\simeq\Hom{\env B}{F}V\\
 &\simeq\Hom{\env B}SV
\end{align*}
The quasi-isomorphisms are induced by $X\otimes_KY \simeq V$ and
$F\simeq S$, respectively, because $F$ is semiprojective over $\env B$ 
and $V$ is semiinjective over $\env B$.
\end{proof}

  \begin{step}
 \label{linearity}
The composed morphism of the chain of isomorphisms
   \begin{align*}
\Rhom S{\Rhom S{M}{D^\sigma}}N
&\simeq\Rhom B{\Rhom PM{L}}N\\
&\simeq\Rhom CB{\Rhom P{\Rhom PM{L}}N}\\
&\simeq\Rhom CB{\Rhom{\env P}P{\env P}\dtensor{\env P}{(M\dtensor KN)}}\\
&\simeq\Rhom CB{\Rhom{\env P}P{M\dtensor KN}}\\
&\simeq\Rhom{\env B}S{M\dtensor KN}\\
&\simeq\Rhom{S\dtensor{K}S}S{M\dtensor KN}
 \end{align*}
provided by Steps \ref{represents-right} through \ref{represents-left} and 
Theorem \ref{hoch:main}, defines an isomorphism in $\dcat$.
  \end{step}

 \begin{proof}
The diagram of DG algebras in Step \ref{chooseDiagram} provides a morphism 
from $\env B$ to every DG algebra appearing in the chain of canonical 
isomorphisms above.  Thus, each isomorphism in the chain above defines a
unique isomorphism in $\dcat[\env B]$.  Its source and target are complexes 
of \text{$S$-modules}, on which $\env B$ acts through the composed morphism of 
DG algebras $\env B\to B\to S$.  This map is equal to the composition 
$\env B\to\env S\to S$.  Therefore, Lemma \ref{quisms}, applied first to 
the quasi-isomorphism $\env B\to\env S$, then to the homomorphisms 
$S\to\env S\to S$ given by $s\mapsto s \otimes1$ and $s\otimes s'\mapsto ss'$, 
shows that the complexes above are also isomorphic in $\dcat$.
  \end{proof}

\begin{step}
\label{linearity2}
The morphism in Step \ref{linearity} is natural with respect to 
$M$ and $N$.
\end{step} 

\begin{proof}
The morphism in question is represented by a  composition 
of quasi-isomor\-phisms of DG modules over $\env B$, so it suffices 
to verify that each such quasi-isomor\-phism represents a natural
morphism in $\dcat[\env B]$.  

Three kinds of quasi-isomorphisms are used.  The one chosen in 
Step \ref{homology} involves neither $M$ nor $N$, and so works 
simultaneously for all complexes of $S$-modules; thus, no issues of
naturality arises there.  Some of the constituent quasi-isomorphisms 
themselves are natural isomorphisms, such as Hom-tensor adjunction 
or associativity of tensor products.  Finally, there are quasi-isomorphisms
of functors induced replacing some DG module with a 
semiprojective or a semiinjective resolution.  The induced morphism of 
derived functors are natural, because morphisms of DG modules define 
unique up to homotopy morphisms of their resolutions; see 
\ref{semi:existence}.
 \end{proof}

The isomorphism~\eqref{eq:reduction21} and its properties have now 
been established.  

Theorem~\ref{thm:reduction1}(1) shows that formula \eqref{eq:reduction22} 
is equivalent to~\eqref{eq:reduction21}.
  \end{proof}

The next result is an analog of Theorem~\ref{thm:reduction2} for the
derived Hochschild functor from Remark \ref{hoch:homology};
it can be proved along the same lines, so the argument is omitted.

\begin{theorem}
\label{thm:reduction4}
If $\fd_KS$ is finite, then in $\dcat$ there are isomorphisms
 \begin{align}
     \label{eq:reduction41}
S\dtensor {S\dtensor{K}S} {\Rhom KMN}
&\simeq \Rhom SM{D^{\sigma}}\dtensor SN\,,
  \\
     \label{eq:reduction42}
S\dtensor {S\dtensor{K}S}{\Rhom K{\Rhom SM{D^\sigma}}N}
&\simeq M\dtensor SN\,,
            \end{align}
for all $M\in\dcatf{\sigma}$ and $N\in\dcat$; this morphism is natural 
in $M$ and $N$. \qed
 \end{theorem}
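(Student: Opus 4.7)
The plan is to mirror the proof of Theorem~\ref{thm:reduction2} throughout, with a systematic Hom–tensor interchange at corresponding positions in the chain of quasi-isomorphisms. I would reuse the full setup from Step~\ref{chooseDiagram} of that proof: the commutative diagram involving the DG algebras $\env P\to A\to P$, $P\to B\to S$, $C=\env B\otimes_{\env P}A$ and $\ov C=B\otimes_P B$, together with the semiinjective resolution $\env P\to U$, the graded $P$-module $H=\hh{\Hom{\env P}PU}$, the dual $L=\Hom P H P$ with its $P$-semiinjective resolution $L\to I$, and the identification $D^\sigma\simeq\Rhom P S I$ established in Step~\ref{balance}. Choose $B$-semiprojective resolutions $X\to M$ (bounded, with each $X_i$ finite projective over $P$, as in Step~\ref{represents-right}) and $Y\to N$.

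The core of the proof is to build a chain of quasi-isomorphisms from $\Rhom S M{D^\sigma}\dtensor S N$ to $S\dtensor{\env S}\Rhom K M N$ whose structure exactly parallels the chain in Steps~\ref{represents-right}--\ref{represents-left} of the earlier proof. The analog of Step~\ref{represents-right} uses Hom-adjunction $\Rhom S M{\Rhom P S I}\simeq\Rhom P M I$ together with the resolutions $X$ and $Y$ to represent $\Rhom S M{D^\sigma}\dtensor S N$ by $\Hom P X I\otimes_B Y$; the analog of Step~\ref{injective} passes to $\ov C$ via the semiprojective resolution $F\to B$ of $B$ over $C$; the pivotal analogs of Steps~\ref{bidual} and~\ref{theta} apply the crossover isomorphism of~\ref{crossover} in the reverse direction to identify $\Hom P X L$ with $H\otimes_P X$, then rearrange tensor products through $\env P$ by \eqref{eq:isos2} and invoke an evaluation map dual to the one used earlier to replace $X\otimes_K Y$ by $\Rhom K X J$ inside the argument; the analog of Step~\ref{represents-left} uses tensor adjunction on the $\env B$-side and descends from $\env B$ to $\env S$ and from $B$ to $S$ by Lemma~\ref{quisms}.

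The main obstacle I expect is this ``pivot'' step, dual to Steps~\ref{bidual}--\ref{theta}. In the Hom-case, the evaluation morphism between $\Rhom{\env P}P{\env P}\dtensor{\env P}(M\dtensor K N)$ and $\Rhom{\env P}P{M\dtensor K N}$ was bijective thanks to the boundedness and finite projectivity of $A$ over $\env P$ and of $X$ over $P$; the dual pivot here requires the same finite-projectivity inputs---namely that $A$ is a bounded complex of finite projective $\env P$-modules by Step~\ref{chooseDiagram}, that $H$ and $L$ are finite projective $P$-modules by~\eqref{eq:smooth2}, and that $X$ is a bounded complex of finite projective $P$-modules---in order to make the reversed evaluation and crossover morphisms honest quasi-isomorphisms. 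Finally,~\eqref{eq:reduction42} follows from~\eqref{eq:reduction41} by substituting $\Rhom S M{D^\sigma}$ for $M$ (which lies in $\dcatf\sigma$ by Theorem~\ref{thm:reduction1}(1)) and invoking the biduality isomorphism $\delta^M$ of the same theorem; naturality in $M$ and $N$ is inherited from the naturality of each link in the chain, exactly as in Step~\ref{linearity2} of the proof of Theorem~\ref{thm:reduction2}.
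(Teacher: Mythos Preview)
Your proposal matches the paper's approach exactly: the paper omits the argument for Theorem~\ref{thm:reduction4}, saying only that it ``can be proved along the same lines'' as Theorem~\ref{thm:reduction2}, and your plan to mirror Steps~\ref{chooseDiagram}--\ref{linearity2} with a systematic Hom--tensor interchange is precisely that. One imprecision worth fixing in your pivot: $\Hom PXL$ is the $P$-dual of $H\otimes_PX$, not $H\otimes_PX$ itself; the workable identification at that stage is $\Hom PXL\otimes_PJ\cong\Hom PX{L\otimes_PJ}\cong\Hom PX{\Hom PHJ}\cong\Hom P{H\otimes_PX}J$, which then links to $A\otimes_{\env P}\Hom KXJ$ via the same finite-projectivity inputs you correctly isolate.
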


Setting $M=S=N$ in \eqref{eq:reduction41} produces a remarkable expression for $D^{\sigma}$:

\begin{corollary}   \pushQED{\qed}
  \label{cor:reduction4}
In $\dcat$ there is an isomorphism
 \[
D^{\sigma}\simeq S\dtensor{S\dtensor{K}S}{\Rhom KSS}\,.
    \qedhere
            \]
 \end{corollary}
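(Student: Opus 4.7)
The plan is to specialize Theorem~\ref{thm:reduction4} to the case $M=N=S$ and to verify that both sides collapse as claimed. First I would check that $S$ belongs to $\dcatf\sigma$: the homology $\hh S=S$ is certainly finite over $S$, and since $\fd_KS$ is assumed finite (this hypothesis is inherited from Theorem~\ref{thm:reduction4}), $S$ is isomorphic in $\dcat[K]$ to a bounded complex of flat $K$-modules. So the isomorphism \eqref{eq:reduction41} applies with $M=N=S$ and yields, naturally in both arguments,
\[
S\dtensor{S\dtensor{K}S}{\Rhom KSS}\;\simeq\;\Rhom SS{D^{\sigma}}\dtensor SS
\quad\text{in}\quad \dcat\,.
\]

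Next I would identify the right-hand side with $D^{\sigma}$. Two applications of the unit isomorphisms for $\dtensor S$ and $\Rhom S--$ suffice: the canonical map $D^{\sigma}\to\Rhom SS{D^{\sigma}}$ is an isomorphism in $\dcat$, as is the canonical map $D^{\sigma}\dtensor SS\to D^{\sigma}$. Composing gives $\Rhom SS{D^{\sigma}}\dtensor SS\simeq D^{\sigma}$, which when combined with the previous display produces the asserted isomorphism.

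I do not expect any genuine obstacle here; the content of the corollary is entirely absorbed into Theorem~\ref{thm:reduction4}, and the remaining simplifications are formal. The only point worth being careful about is membership of $S$ in $\dcatf\sigma$, which is why the finiteness hypothesis $\fd_KS<\infty$ must be carried along. With that in place the argument is just one specialization followed by two unit reductions.
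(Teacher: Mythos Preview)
Your proposal is correct and matches the paper's own proof exactly: the paper simply states ``Setting $M=S=N$ in \eqref{eq:reduction41} produces a remarkable expression for $D^{\sigma}$'' and records the corollary with a \qed. Your added verification that $S\in\dcatf\sigma$ under the standing hypothesis $\fd_KS<\infty$, and the explicit unit reductions $\Rhom SS{D^\sigma}\dtensor SS\simeq D^\sigma$, are the obvious details behind that one-line derivation.
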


\begin{remark}
\label{rem:reduction2}
The right hand sides of \eqref{eq:reduction22} and \eqref{eq:prereduction11} 
coincide, so one might wonder whether the induced isomorphism of the
derived Hochschild functors on the right hand side might be induced by 
an isomorphism of their coefficients:
  \[
\Rhom SM{D^\sigma}\dtensor KN\simeq\Rhom KMN\,.
  \]
To prove that no such isomorphism exists in general, it suffices to
consider the case when $K$ is a field, $S=K[x]$ a polynomial ring
over $K$, and $M=S=N$.  The factorization $K\to K[x]=K[x]$ gives 
$D^\sigma\simeq K[x]$, hence the left-hand side is isomorphic to 
$K[x]\otimes_KK[x]$.  On the other hand, the $\env S$-module 
$\Rhom K{K[x]}{K[x]}$ on the right hand side has an uncountable 
basis as a $K$-vector space.
 \end{remark}

\section{Global duality}
\label{A:dual}

We now reconsider a portion of the preceding results from a global point
of view.  The facts needed from Grothendieck duality theory for schemes
are summarized in this section, and the globalized results given in
the next.

While it is not difficult to show that the complexes and functors we
will deal with specialize over affine schemes to sheafifications of
similar things that have appeared earlier, the corresponding statement
for functorial maps between such objects is not so easy to establish,
and we will not be settling this issue here.  Indeed, giving concrete
descriptions of abstractly characterized functorial maps is one of the
major problems of duality theory.  \smallskip

\pagebreak[3]

\emph{Schemes are assumed throughout to be noetherian.}
\vskip2pt

A scheme-map $f \colon X \to Y$ is \emph{essentially of finite type\/}
if every~$y \in Y$ has an affine open neighborhood $V = \Spec(A)$
such that $f^{-1}V$ can be covered by finitely many affine open $U_i =
\Spec(C_i)$ such that the corresponding ring homomorphisms $A \to C_i$
are essentially of finite type.

If, moreover, each $C_i$ is a localization of $A$ (that is, a ring of fractions) 
and $A\to C_i$ is the canonical map, then we say that $f$ is \emph{localizing}.

The property ``essentially finite\kern.5pt-type'' behaves well with
respect to composition and base change:  if $f\colon X\to Y$ and
\mbox{$g\colon  Y\to Z$} are scheme-maps, and if both $f$ and $g$ are
essentially of finite type, then so is the composition~$gf$; if $gf$
and $g$ are  essentially of finite type then so is~$f$; and if $Y'\to Y$
is any scheme\kern.5pt-map then $ X'\!:= Y'\times_Y X$ is noetherian,
and the projection $ X'\to Y'$ is essentially of finite type.\looseness=-1

Similar statements hold with ``localizing" in place of ``essentially
finite\kern.5pt-type.''\vspace{.6pt}

If the scheme\kern.5pt-map $f$ is localizing and also
injective (as a set-map) then we say that
$f$ is a \emph{localizing immersion}.

A scheme\kern.5pt-map  is \emph{essentially smooth}, resp.~\emph{essentially
\'etale}, if it~is essentially of finite type and formally smooth,
resp.~formally \'etale \cite[\S17.1]{Gr}.

For example, any localizing map is essentially \'etale: this assertion, being
local (see  \cite[(17.1.6)]{Gr}), results from \cite[(17.1.2)]{Gr} and 
\cite[(19.10.3)(ii)]{Gr_0}.) \vspace{1pt}

\begin{remark}\label{nice}
In several places we will refer to proofs in \cite{Il3} which make
use of the fact that the diagonal of a smooth map is a quasi-regular
immersion. To ensure that those proofs apply here, note that  the
same property for essentially smooth maps is given by \cite[16.10.2,\,
16.9.4]{Gr}.
\end{remark}

In \cite[4.1]{Nk2}, extending  a compactification theorem of
Nagata,  it is shown that \emph{any\- essentially-finite-type separated
map $f$ of noetherian schemes factors\- as \mbox{$f=\bar{f\,}\!u$}
with $\bar{f\,}\!$ proper and $u$~a localizing immersion.}\vspace{1.5pt}

\begin{example}\label{example}(Local compactification.)
A map $f\colon X=\Spec S\to\Spec K=Y\<$ coming from an essentially
finite\kern.5pt-type homomorphism of noetherian rings $ K\to S$ factors
as\looseness=-1
$$
X\xrightarrow{j} Z \overset{\lift.85,i,}\hookrightarrow  \bar Z\xrightarrow{\pi}Y,
$$
where $Z$ is the Spec of a finitely-generated $K$-algebra $T$ of which
$S$ is a localization, $j$ being the corresponding map, where $i$ is
an open immersion, and where $\pi$ is a projective map, so that $\pi$
is proper and $ij\>$ is a localizing immersion.
\end{example}

In the rest of this section we review basic facts about Grothendieck
duality, referring to \cite{Lp2} and \cite{Nk} for details\vspace{1pt}.

\medskip

\emph{Henceforth all scheme-maps are assumed to be essentially of
finite type, and separated.}

\medskip

For a scheme $X\<$, $\D(X)$ is the derived category
of the category of $\OX$-modules, $\Dc(X)\subset\D(X)$
(resp.~$\Dqc(X)\subset\D(X))$ is the full subcategory whose objects
are the $\OX$-complexes with coherent (resp.~quasi-coherent)
homology modules, and $\D_{\<\bullet}^{\lift.95,\text
{\cmt\char'053},}$~(resp.~$\D_{\<\bullet}^{\lift.95,\text
{\cmt\char'055},})$ is the full subcategory of $\D_{\<\bullet}$
whose objects are the complexes $E\in \D_{\<\bullet}$ with
$H^n(E)\!:=H_{-n}(E)=0$ for all $n\ll 0$ (resp.~$n\gg0$).

\begin{chunk}
To any scheme\kern.5pt-map $f\colon X\to Y$ one associates
the right-derived direct-image functor~$\Rf\colon\Dqc(X)\to\Dqc(Y)$
and its \emph{left adjoint,} the left-derived
inverse-image functor $\bL f^*\colon\Dqc(Y)\to\Dqc(X)$
\cite[3.2.2,\,3.9.1,\,3.9.2]{Lp2}.

These functors interact with
the left-derived tensor product $\Otimes{}$ via a natural isomorphism
\begin{equation}\label{^* and tensor}
\bL f^*(E\Otimes{Y}F)\iso
\bL f^*\<\<E\Otimes{\<\<X} \bL f^*\<\<F
\qquad\big(E,F\in\D(Y)\big),
\end{equation}
see \cite[3.2.4]{Lp2}; via the functorial map
\begin{equation}\label{_* and tensor}
\Rf G\Otimes{Y}\Rf H \to \Rf (G\Otimes{\<\<X} H)
\qquad\big(G,H\in\D(X)\big)
\end{equation}
adjoint to the natural composite map
$$
\bL f^*(\Rf G\Otimes{Y}\R f_*H)\underset{\eqref{^* and tensor}}\iso
\bL f^*\Rf G\Otimes{\<\<X} \bL f^*\Rf H\longrightarrow
G\Otimes{\<\<X} H;
$$
and via the \emph{projection isomorphism}
\begin{equation}\label{projection}
\Rf F\Otimes{Y} G \iso \Rf(F\Otimes{\<\<X} \bL f^*G)
\qquad\big(F\in\Dqc(X),\;G\in\Dqc(Y)\big),
\end{equation}
defined qua map to be  the natural composition
$$
\Rf F\Otimes{Y}G\lra \Rf F\Otimes{Y}\Rf\bL f^*G\underset{\eqref{_* and tensor}}\lra
\Rf(F\Otimes{\<\<X} \bL f^*G).
$$
see \cite[3.9.4]{Lp2}.
 \end{chunk}

\begin{chunk}
\label{* and RHom}
Interactions with the derived (sheaf-)homomorphism functor $\RH{}$ occur via
natural bifunctorial maps
\begin{equation}\label{^* and Hom}
\bL f^*\RH_Y(E, F)\to \RH_{\<X}(\bL f^*\<\<E\<,\> \bL f^*\<\<F)
\qquad\big(E,F\in\D(Y)\big),
\end{equation}
\begin{equation}\label{_* and Hom}
\Rf\RH_{\<X}(E,F)\to\RH_Y(\Rf E\<,\>\Rf F)\qquad\big(E,F\in\D(X)\big),
\end{equation}
the former corresponding via \eqref{global adjunction} below to the composite map
$$
\bL f^*\RH_{\<X}(E,F)\Otimes{\<\<X}\bL f^* E\>\>\underset{\eqref{^* and tensor}^{-\<1}}\iso 
\>\>\bL f^*\big(\RH_{\<X}(E,F)\Otimes{Y}E\big)
\xra{\bL f^*\<\<\varepsilon} \bL f^* F,
$$
with $\varepsilon$ corresponding  via~\eqref{global adjunction} to 
the identity map of $\RH_Y(E,F)$; and
the latter corresponding  to the composite map
$$
\Rf\RH_{\<X}(E,F)\Otimes{Y}\Rf E\underset{\eqref{_* and tensor}}\lra \Rf\big(\RH_{\<X}(E,F)\Otimes{\<\<X}E\big)
\xra{\Rf\varepsilon} \Rf F.
$$

The map \eqref{^* and Hom} is an \emph{isomorphism} if $f$ is an open
immersion, or if $E\in\Dcmi(Y)$, $F\in\Dqcpl(Y)$ and $f$~has finite flat
dimension \cite[4.6.7]{Lp2}.
\end{chunk}

\begin{chunk}\label{adjunction}
The fundamental adjunction relation between the derived tensor  and derived
homomorphism functors is expressed by the standard
trifunctorial isomorphism
$$
\RH_{\<X}\<\big(A\Otimes{\<\<X}\mkern-2.5mu B,C\big)\iso\RH_{\<X}\<\big(A,\RH_{\<X}(B,C)\big)
\qquad\big(A, B, C\in\D(X)\big),
$$
see e.g., \cite[\S2.6]{Lp2}.
Application of the composite functor $\textup{H}^0\R\Gamma(X,-)$
to this isomorphism produces a canonical isomorphism\vspace{2pt}
\begin{equation}\label{global adjunction}
\operatorname{Hom}_{\mathsf D(X)}\!\big(A\Otimes{\<\<X}\mkern-2.5mu B,C\big)\iso
\operatorname{Hom}_{\mathsf D(X)}\!\big(A,\RH_{\<X}(B,C\>)\big)
\qquad\big(A, B, C\in\D(X)\big).
\end{equation}

{From} among the many resulting maps, we will need the functorial one
\begin{equation}\label{Hom and Tensor}
\RH_{\<X}( M\<,\> E)\Otimes{\<\<X}F\lra
\RH_{\<X}( M\<,\mkern1.5mu E\Otimes{\<\<X}F)
\qquad\big(M, E, F\in\D(X)\big),
\end{equation}
corresponding via ~\eqref{global adjunction} to the natural composite map
(with $\varepsilon$ as above):\vspace{1.5pt}
$$
\smash{(\RH_{\<X}( M\<,\> E)\Otimes{\<\<X}F) \Otimes{\<\<X} M\iso
(\RH_{\<X}( M\<,\> E)\Otimes{\<\<X}M) \Otimes{\<\<X} F
\xra{\varepsilon\Otimes{\<\<X}1} E\Otimes{\<\<X}F.}
$$

The map \eqref{Hom and Tensor} is an \emph{isomorphism} if the complex
$M$ is \emph{perfect} (see \S\ref{Reduction over schemes}).  Indeed,
the question is local on $X\<$, so one can assume that $M$ is a bounded
complex of finite\kern.5pt-rank free $\OX$-modules. The assertion is
then given by a simple induction---similar to the one in the second-last
paragraph in the proof of \cite[4.6.7]{Lp2}---on the number of degrees
in which $M$ doesn't vanish.

Similarly, the map \eqref{Hom and Tensor} is an isomorphism
if $F$ is perfect.
\end{chunk}

\begin{chunk}\label{fiber square}
For any commutative square of scheme\kern.5pt-maps
\[
\xymatrixrowsep{1pc}
\xymatrixcolsep{1pc}
\xymatrix{
X'\ar@{->}[rr]^-{v}
\ar@{->}[dd]_-{g}
&&
X
\ar@{->}[dd]^-{f}
\\
&\Xi
\\
Y'
\ar@{->}[rr]_-{u}
&&
Y
}
\]
one has the  map
$\theta_{\>\Xi}\colon \bL u^* \Rf\to \R g_*\bL v^*$
adjoint to the
natural composite map
$$
\Rf\longrightarrow\Rf\R v_*\bL v^*\iso
\R u_*\R g_*\bL v^*.
$$
When $\Xi$ is a \emph{fiber square} (which means that the map associated to
$\Xi$
is an isomorphism $X'\iso X\times_Y Y'$), and $u$ is \emph{flat}, then
$\theta_{\>\Xi}$ is an \emph{isomorphism}. In fact, for any fiber square
$\Xi$, $\>\theta_{\>\Xi}$ \emph{is an isomorphism\/ $\iff \Xi$ is
tor-independent} \cite[3.10.3]{Lp2}.
\end{chunk}

\begin{chunk}\label{twisted inverse}
Duality theory focuses on the \emph {twisted inverse-image pseudo\-functor}
$$
f^!\colon\Dqcpl(Y)\to\Dqcpl(X),
$$
where  ``pseudofunctoriality" (also known as \mbox{ ``2-functoriality"})
entails, in addition to functoriality,  a family of functorial
isomorphisms
$c^{}_{\< f,\mkern1.5mu g}\colon (gf)^!\iso f^!g^!$,\vspace{1pt} one for each composable
pair
$X\xrightarrow{\lift1.05,f,} Y\xrightarrow{\lift.7,g,} Z$, satisfying a natural  ``associativity"
property
vis-\`a-vis any composable triple, see, e.g., \cite[3.6.5]{Lp2}.

This pseudofunctor is uniquely determined up to isomorphism by the following
three properties:\vspace{1pt}

(i) If $f$ is essentially \'etale then $f^!$ is the usual restriction functor
$f^*$.

(ii) If $f$ is proper then $f^!$ is right-adjoint to $\Rf$ (which takes $\Dqcpl(X)$ into
$\Dqcpl(Y)$ \cite[(3.9.2)]{Lp2}).

(iii) Suppose there is given a fiber square $\Xi$ as above, with $f$ (hence
$g$) proper and $u$ (hence $v$) essentially \'etale.
Then  the functorial \emph{base-change map}
\begin{equation}\label{beta}
\beta_{\>\Xi}(F)\colon:v^*\<\<f^!F\to g^!\<u^*F\qquad\big(F\in\Dqcpl(Y)\big),
\end{equation}
defined to be  adjoint to the natural composition
$$
\R g_* v^*\mkern-2mu f^!F\underset{\lift1.2,\theta_{\>\Xi}^{-\<1},}{\iso}
u^*\Rf f^!F
\longrightarrow u^*\<\<F,\\[1.5pt]
$$
is identical with
the natural composite \emph{isomorphism}
$$
v^*\<\<f^!F= v^!\<f^!F
\iso (fv)^!F=(ug)^!F\iso
g^!\<u^!F  =g^!\<u^*F.
$$
For the existence of such a pseudofunctor, see \cite[\S 5.2]{Nk2}.
\end{chunk}

\vspace{1pt}

\pagebreak[3]

\begin{remarks}\label{twisted rem}
(a) If $f$ has finite flat dimension (in addition to
being proper), then \eqref{beta} is an \emph{isomorphism} for \emph{all}
$F\in\Dqc(Y)$---see \cite[4.7.4]{Lp2} and
\cite[1.2]{LN}.\vspace{1pt}

(b) Theorem 5.3 in \cite{Nk2} (as elaborated in
\cite[7.1.6]{Nk}) states  that, moreover, one can associate, in
an essentially unique way, to \emph{any} fiber square $\Xi$ with $u$
(hence~$v$) flat, a functorial isomorphism $\beta_{\>\Xi}$, agreeing
with \eqref{beta} when $f$ is proper, and with the natural isomorphism
$v^*\<\<f^*\iso g^*u^*$ when $f$ is essentially \'etale.\vspace{1pt}

(c) Let $f\colon X\to Y$ be essentially smooth, so that by
\cite[16.10.2]{Gr} the relative
differential sheaf $\>\Omega_f$  is locally free over $\OX$.
On any connected component $W$ of~$X$, the
rank of~$\>\Omega_f$ is a constant, denoted $d(W)$.
There is a \emph{functorial isomorphism}\vspace{1pt}
\begin{equation}\label{smooth!}
f^!\<F\iso
\text{\cms\char'006}^d\>\lift2,\bwedge{}^{\lift1.7,\!d,}_{\lift.4,\>\OX,},
\!(\Omega_f)\>\otimes_{\OX} f^* \<\<F\qquad \big(F\in\Dqc(Y)\big),\\[1pt]
\end{equation}
with $\text{\cms\char'006}^d\>\lift2,\bwedge{}^{\lift1.7,\!d,}_{\lift.4,\>\OX,}, \!(\Omega_f)$  the complex whose restriction to any  $W$
is $\text{\cms\char'006}^{d(W)}\lift2,\bwedge{}^{\lift1.7,\!d(W),}_{\lift.4,\>\mathcal O_W,}, \<\<\big(\Omega_f\big|_W\big)$.\vspace{3.5pt}

\pagebreak[3]
To prove this, one may assume that $X$ itself  is connected, and set
$d\!:= d(X)$.  Noting that the diagonal $\Delta\colon X\to X\times_YX$ is
defined locally by a regular sequence of length $d$ \cite[16.9.4]{Gr},
so that \mbox{$\Delta^!\mathcal O_{X\times_Y X}\Otimes{}\bL \Delta^*
G\cong\Delta^!G$} for all $G\in\Dqc(X\times_Y X)$ \cite[p.\,180, 7.3]{H},
one can imitate the proof of \cite[p.\,397, Theorem 3]{V'}, where, in
view of~(b) above, one can drop the properness condition and take $U=X$,
and where finiteness of Krull dimension is superfluous.
 \end{remarks}

In this connection, see also~\ref{globalized composition} below,
and \cite[\S2.2]{Co2}.

\begin{chunk}\label{sheaf duality}
The fact that $\beta_{\>\Xi}(F)$ in \eqref{beta}  is an isomorphism for
all $F\in\Dqcpl(Y)$ whenever $u$ is an open immersion and $f$ is proper, is shown in
\cite[\S4.6, part V\kern.5pt]{Lp2} to be equivalent to \emph{sheafified
duality,} which is that \emph{for any proper $f\colon X\to Y,$ and
any \mbox{$E\in\Dqc(X),$} $F\in\Dqcpl(Y),$  the natural composition,
in which the first map comes from \textup{\ref{_* and Hom},}}
 \begin{equation}\label{duality iso}
\Rf\mathcal Hom_X(E,\,f^!\<F)\to \R\mathcal Hom_Y(\Rf E,\,\Rf f^!\<F)\to
\R\mathcal Hom_Y(\R f_*E,F),
 \end{equation}
\emph{is an isomorphism}.

Moreover, if the proper map $f$ has finite flat dimension,
then sheafified duality holds for \emph{all} $F\in\Dqc(Y)$, see
\cite[4.7.4]{Lp2}.\vspace{1pt}

If $f$ is a \emph{finite} map,  the isomorphism \eqref{duality iso}
with $E=\OX$ determines the functor $f^!$ up to isomorphism. (See  \cite[\S2.2]{Co2}.)
In the affine case, for example, if~$f\colon\Spec B\to \Spec A$ corresponds
to a finite  ring homomorphism $A\to B$, and  $^\sim$~denotes sheafification, then for an $A$-complex $M\<$, the $B$-complex
$f^!(M^\sim)$ can be defined by the equality
\begin{equation}\label{finite}
f^!(M^\sim) = \R\textup{Hom}_A(B,M)^\sim.
\end{equation}
\end{chunk}

\begin{chunk}\label{globalized composition}($f^!$ and $\Otimes{}$).\vspace{1pt}
For any  $f=\bar{f\,}\!u$ with $\bar{f\,}\!$ proper and $u$ localizing,
and $E$, $F\in\Dqcpl(Y)$ such that  $E\Otimes{Y}F\in\Dqcpl(Y)$ \vspace{1pt}
(e.g., $E$ perfect, see \S\ref{Reduction over schemes}),
there is a canonical functorial map\looseness=-1
\begin{equation}\label{^! and Tensor}
f^!\<\<E\Otimes{\<\<X}\bL f^*\<\<F\to f^!(E\Otimes{Y}F)
\end{equation}
equal, when $u$=1, to the map $\chi^f$ adjoint to the natural composite map
$$
\Rf(f^!\<\<E\Otimes{\<\<X}\bL f^*\<\<F)
\iso \Rf f^!\<\<E\Otimes{Y}F \longrightarrow
E\Otimes{Y}F,
$$
(see \eqref{projection}), and equal,  in the general case,  to the natural composition
\pagebreak[3]
\begin{multline}\label{general case}
f^!\<\<E\Otimes{\<\<X}\bL f^*\<\<F\cong u^*\<\<\bar{f\,}\!^! \<\<E\Otimes{\<\<X}
u^*\bL \bar{f\,}\!^*\<\<F
\cong u^*(\bar{f\,}\!^! \<\<E\Otimes{\<\<X}\bL \bar{f\,}\!^*\<\<F)\\
\xrightarrow{u^*\<\<\chi^{\bar{f\,}\!\!\!}}
u^*\<\<\bar{f\,}\!^!(E\Otimes{Y}\<F)
\cong
f^!(E\Otimes{Y}\<F).
\end{multline}
``Canonicity" means  \eqref{general case} depends only on~$f\<$, not on
the factorization~$f=\bar{f\,}\!u$. This is shown by imitation of the
proof of \cite[4.9.2.2]{Lp2},\vspace{-2pt} after one notes that
for any composition $X\xrightarrow{i} X'\xrightarrow{v} Y'$ with $i$
a closed immersion and $v$ localizing, the induced map from~$X$ to
its schematic image in $Y'$ is localizing: the question being local,
this just means that for a multiplicative system $M$ in a ring $B$,
and a $B_M$-ideal~$J$ with inverse image $I$ in $B$, the natural map
$(B/I)_M\to B_M/J$ is bijective. (See also \cite[5.8]{Nk2}.)

\stepcounter{equation}\label{tensor iso}\noindent
{\bf \theequation.} By \cite[Theorem 5.9]{Nk2},
the map \eqref{^! and Tensor} is an \emph{isomorphism} if $f$ has finite flat dimension and $E=\mathcal O_Y$---hence more generally if $E$ is \emph{perfect,}
cf.~end of \S\ref{adjunction}.
In particular, for any $g\colon Y\to Z$ there is a natural isomorphism
$$
(gf)^!\mathcal O_Z\cong f^!g^!\mathcal O_Z\iso f^!\mathcal
O_Y\Otimes{\<\<X}\bL f^*\<g^!\mathcal O_Z.
$$
In combination with~\ref{twisted rem}(c), and \eqref{reldual} below, this appears to be a
globalization of~\cite[Theorem 8.6]{AIL1}. But it is by no
means clear (nor will we address the point further here) that for maps
of affine schemes the present isomorphism agrees  with the sheafification
of the one in~\emph{loc.cit.}

\end{chunk}

\begin{chunk}
\label{twist and Hom}($f^!$ and $\RH$).
Let $f\colon X\to Y$ be a scheme\kern.5pt-map, $E\in\Dcmi(Y)$, $F\in\Dqcpl(Y)$. \emph{There is a canonical isomorphism}
\begin{equation}
\label{twisted RHom}
f^!\RH_Y(E\<,F\>)\iso \RH_{\<X}(\bL f^*\<\<E\<,\>f^!\<F\>).
\end{equation}
Indeed,  by \cite[p.\,92, 3.3]{H}, $\RH_Y(E\<,F\>)\in\Dqcpl(Y)$, so 
$f^!\RH_Y(E\<,F\>)\in\Dqcpl(X)$; and furthermore, $f^!\<F\in\Dqcpl(X)$ and,
by  \cite[p.\,99, 44]{H}, $\bL f^*E\in\Dcmi(X)$, so that
$\RH_{\<X}(\bL f^*E\<,\>f^!F\>)
\in\Dqcpl(X)$. (Those proofs in \cite{H} which are ``left to the reader" 
use \cite[p.\,73, 7.3]{H}.) So when $f$ is \emph{proper} (the only case we'll need),
the  map \eqref{twisted RHom} and its inverse 
come out of  the  following composite functorial isomorphism, 
for any $G\in\Dqcpl(X)$---in particular, $G=f^!\RH_Y(E\<,F\>)$ or $G=\RH_{\<X}(\bL f^*E\<,\>f^!F\>)$:
\begin{xxalignat}{2}
\operatorname{Hom}_{\mathsf D(X)}\!\<\<\big(G,f^!\RH_Y(E\<,F\>)\big)
\iso&
\operatorname{Hom}_{\mathsf D(Y)}\!\<\<\big(\R f_*G,\RH_Y(E\<,F\>)\big)
&&{ \textup{by~\ref{twisted inverse}(ii)}}
\\
\iso&
\operatorname{Hom}_{\mathsf D(Y)}\!\<\<\big(\R f_*G\Otimes Y\!E,F\>)\big)
&&{ \textup{by~\eqref{global adjunction}}}
\\
\iso&
\operatorname{Hom}_{\mathsf D(Y)}\!\<\<\big(\R f_*(G\Otimes {\<\<X}\<\bL f^*\<\<E)\<,F\>)\big)
&&{ \textup{by~\eqref{projection}}}
\\
\iso&
\operatorname{Hom}_{\mathsf D(X)}\!\<\<\big(G\Otimes {\<\<X}\<\bL f^*\<\<E\<,f^!\<F\>)\big)
&&{ \textup{by~\ref{twisted inverse}(ii)}}
\\
\iso&
\operatorname{Hom}_{\mathsf D(X)}\!\<\<\big(G,\RH_{\<X}(\bL f^*\<\<E\<, \>f^!\<F\>)\big)
&&{ \textup{by~\eqref{global adjunction}}
}
\end{xxalignat}
(For the general case, one compactifies, and shows canonicity\dots) 
\end{chunk}

\section{Reduction of derived Hochschild functors over schemes}
\label{Reduction over schemes}

Terminology and assumptions remain as in the first part of
Section~\ref{A:dual}. Again, \emph{all schemes are assumed to be
noetherian, and all scheme\kern.5pt-maps to be essentially of finite type,
and separated.} \vspace{1.5pt}

An $\OX$-complex $M$ is  \emph{perfect} if $X$ can be covered by open
sets $U$ such that the restriction $M|_U$ is $\D(U)$-isomorphic to a
bounded complex of finite-rank locally free $\mathcal O_U$-modules.
For a scheme\kern.5pt-map $f\colon X\to Y\<$, with $f_0$ the map $f$
considered only as a map of topological spaces, and  $f_0^{-1}$ the left
adjoint of the direct image functor~$f_{0*}$ from sheaves of abelian
groups on $X$ to sheaves of abelian groups on~$Y\<$,  there~is a standard
way of making  $f_0^{-1}\mathcal O_Y$ into a sheaf of commutative rings
on~$X\<$, whose stalk at any point $x\in X$ is $\mathcal O_{Y,\>f(x)\>}$.
An $\OX$-complex $M$ is \emph{$f$-perfect} if \mbox{$M\in\Dc(X)$} and $M$
is isomorphic in the derived category of $f_0^{-1}\mathcal O_Y$-modules
to a bounded complex of flat $f_0^{-1}\mathcal O_Y$-modules. Perfection
is equivalent to $\id^{\<X}$-perfection, with $\id^{\<X}$ the identity
map of $X$ \cite[p.\,135, 5.8.1]{Il1}.

If $f$ factors as $X\xrightarrow{i}Z\xrightarrow{g} Y$  with $g$
essentially smooth and $i$ a closed immersion, then $M$ is $f$-perfect
if and only if $i_*M$ is $(\id^{\<Z}\<$-)perfect: the proof of
\mbox{\cite[pp.\,252, 4.4]{Il3}} applies here (see Remark~\ref{nice}).
Using \cite[p.\,242, 3.3]{Il3}, one sees that \mbox{$f$-perfection} is
local on $X$: $M$ is $f$-perfect if and only if every $x\in X$ has an
open neighborhood~$U$ such that $M|_U$ is $f|_U$-perfect. Note that, $f$
being a composite of essentially finite\kern.5pt-type maps, and hence
itself essentially of finite type, there is always such a $U$ for which
$f|U$ factors as (essentially smooth)$\,\circ\,$(closed immersion).

Let $\dcatf f$ be the full subcategory of $\D(X)$ whose
objects\vspace{.5pt} are all the $f$-perfect complexes; and let
$\dcatf{X}\!:=\dcatf{\id^{\lift.85,X,}}$ be the full subcategory of
perfect $\OX$-complexes.

If $f\colon X=\Spec S\to\Spec K=Y\<$ corresponds to a homomorphism of
noetherian rings $\sigma\colon K\to S$, then $\dcatf f$ is equivalent to
the category~$\dcatf{\sigma}$ of~\S\ref{Reduction over algebras}: in~view
of the standard equivalence, given by sheafification, between coherent
$S$-modules and coherent $\OX$-modules, this follows from \cite[p.\,168,
2.2.2.1 and p.\,242, 3.3]{Il3}.

The central result in this section is the following theorem.\vspace{-3pt}

\begin{theorem}
\label{thm:reduction3}
Consider a commutative diagram of scheme-maps
\[
\xymatrixrowsep{2pc}
\xymatrixcolsep{2pc}
\xymatrix{
&&X
\ar@{->}[dr]^-{f}
\\
Z
\ar@{->}[r]^-{\delta}
\ar@/^1.5pc/[urr]^-{\nu}
\ar@/_1.5pc/[drr]_-{\gamma}
&
X'
\ar@{->}[dr]_-{g}
\ar@{->}[ur]^-{v}
&\Xi
&Y
\\
&& Y'
\ar@{->}[ur]_-{u}
}
\]
with $\delta$  proper,  $f$ of finite flat dimension,
$u$  flat, and $\Xi$ ~a fiber square.\vspace{1pt}

For $M\in\dcatf f,$ $E\in\dcatf{Y}$ and\/  $N\in\Dqcpl(Y'),$ the following assertions hold.\vspace{1pt}

\textup{(i)}  $u^*\<\<E\Otimes{Y'}\<N\in\Dqcpl(Y').$\vspace{1pt}

\textup{(ii)}  $v^*\RH_{\<X}( M, f^!\<E)\Otimes{X'} \bL g^*\<\<N\in\Dqcpl(X').$\vspace{1pt}

\textup{(iii)} There exist functorial isomorphisms\vspace{-3pt}
$$
\delta^!\big(v^*\RH_{\<X}( M, f^!\<E)\Otimes{X'} \bL g^*\<\<N\big)
\iso\RH_Z\big(\bL\nu^*\<\<M,\> \gamma^!(u^*\<\<E\Otimes{Y'}\< N)\big).
$$
\end{theorem}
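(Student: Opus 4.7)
The plan is to reduce both sides of the isomorphism in~(iii) to the common expression
\[
\delta^!\RH_{X'}\bigl(v^*M,\,v^*\<\<f^!\<\<E \Otimes{X'}\bL g^*\<\<N\bigr),
\]
by stringing together identities from Grothendieck duality collected in Section~\ref{A:dual}. Parts~(i) and~(ii) are essentially boundedness checks: $u^*\<\<E$ is perfect (pullback preserves perfection), so $u^*\<\<E\Otimes{Y'}N$ stays in $\Dqcpl(Y')$. For~(ii), the $f$-perfect complex~$M$ lies in $\Dc^b(X)\subset\Dcmi(X)$; the complex $f^!\<\<E$ is in $\Dqcpl(X)$; hence $\RH_X(M,f^!\<\<E)\in\Dqcpl(X)$; the flat pullback~$v^*$ preserves $\Dqcpl$; the base change $g$ inherits finite flat dimension from $f$, so $\bL g^*\<\<N\in\Dqcpl(X')$; and the tensor product then sits in $\Dqcpl(X')$ because one factor has locally finite Tor~amplitude.

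To simplify the right-hand side, pseudofunctoriality gives $\gamma^!\cong\delta^!g^!$. Since $u^*\<\<E$ is perfect, the projection-formula isomorphism~\eqref{^! and Tensor} yields $g^!(u^*\<\<E\Otimes{Y'}N)\cong g^!u^*\<\<E\Otimes{X'}\bL g^*\<\<N$, and the base-change isomorphism of Remark~\ref{twisted rem}(b)---applicable since $\Xi$ is a fiber square with flat~$u$---identifies $g^!u^*\<\<E$ with $v^*\<\<f^!\<\<E$. Combining this with $\bL\nu^* = \bL\delta^*v^*$ (valid because $v$ is flat) and the twisted-Hom isomorphism~\eqref{twisted RHom} for the proper map~$\delta$, the right-hand side collapses to $\delta^!\RH_{X'}\bigl(v^*M,\,v^*\<\<f^!\<\<E\Otimes{X'}\bL g^*\<\<N\bigr)$.

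For the left-hand side, I invoke~\eqref{^* and Hom}, an isomorphism because $v$ is flat (and hence of finite flat dimension), $M\in\Dcmi(X)$, and $f^!\<\<E\in\Dqcpl(X)$; this rewrites $v^*\RH_X(M,f^!\<\<E)$ as $\RH_{X'}(v^*M,\,v^*\<\<f^!\<\<E)$, reducing the left-hand side to $\delta^!\bigl(\RH_{X'}(v^*M,v^*\<\<f^!\<\<E)\Otimes{X'}\bL g^*\<\<N\bigr)$. Equating the two sides is therefore tantamount to the tensor-Hom identity
\[
\RH_{X'}\bigl(v^*M,\,v^*\<\<f^!\<\<E\bigr)\Otimes{X'}\bL g^*\<\<N \;\xrightarrow{\;\sim\;}\;\RH_{X'}\bigl(v^*M,\,v^*\<\<f^!\<\<E\Otimes{X'}\bL g^*\<\<N\bigr),
\]
a case of the canonical map~\eqref{Hom and Tensor}.

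This last identity is the main obstacle: neither $v^*M$ nor $\bL g^*\<\<N$ is perfect on~$X'$ in general, so the sufficient condition recorded just after~\eqref{Hom and Tensor} does not apply directly. The resolution rests on relative perfection: since $f$-perfection is preserved by base change, $v^*M$ is $g$-perfect, and locally on~$X'$ it is represented by a bounded complex of $\mathcal O_{X'}$-coherent modules that are flat over $g_0^{-1}\mathcal O_{Y'}$. For such representatives, $\RH_{X'}(v^*M,-)$ commutes with tensoring against pullbacks $\bL g^*(-)$ from the base~$Y'$, which yields the required isomorphism. The affine version of this last step is the algebraic tensor-Hom compatibility for relatively perfect complexes under derived base change, a variant of the pseudo-coherence and relative Tor-amplitude results in the style of \cite{Il1, Il3}.
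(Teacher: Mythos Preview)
Your overall strategy is the same as the paper's: reduce the assertion in~(iii), via
\eqref{^* and Hom}, base change~\ref{twisted rem}(b), \eqref{^! and Tensor}, and
\eqref{twisted RHom}, to an instance of the Hom--tensor map~\eqref{Hom and Tensor},
namely
\[
\RH_{X'}(v^*M,\,g^!u^*E)\Otimes{X'}\bL g^*N \longrightarrow
\RH_{X'}(v^*M,\,g^!u^*E\Otimes{X'}\bL g^*N),
\]
and then argue that this map is an isomorphism. You identify this last step
correctly as the crux. The paper isolates it as Lemma~\ref{an iso} and proves it.

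Where your argument is incomplete is precisely in this last paragraph. Two points.
First, the definition of $g$-perfection only says that $v^*M$ has coherent
cohomology and is $\D(g_0^{-1}\mathcal O_{Y'})$-isomorphic to a bounded complex of
$g_0^{-1}\mathcal O_{Y'}$-flat modules; your upgrade to a bounded complex of modules
that are simultaneously $\mathcal O_{X'}$-\emph{coherent} and base-flat is not
given by the definition and is not what is used. Second, even granting such a
representative, the implication ``$\RH_{X'}(v^*M,-)$ commutes with $-\Otimes{X'}\bL
g^*N$'' does not follow from any general tensor--Hom principle: the criterion after
\eqref{Hom and Tensor} requires perfection of $v^*M$ or of $\bL g^*N$ over $X'$,
which you rightly say fails in general. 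Citing \cite{Il1,Il3} in the abstract does
not close this gap.

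The paper's proof of Lemma~\ref{an iso} is the missing idea: localize so that $g$
factors as $X'\xrightarrow{i}Z'\xrightarrow{h}Y'$ with $i$ a closed immersion and
$h$ essentially smooth; then the characterization of $g$-perfection (recalled at
the start of \S\ref{Reduction over schemes}) gives that $i_*(v^*M)$ is
\emph{perfect} on~$Z'$. Applying $i_*$ to the map in question, using duality
\eqref{duality iso} and projection \eqref{projection}, one lands on an instance of
\eqref{Hom and Tensor} with first argument $i_*(v^*M)$---now genuinely perfect---so
the standard criterion applies over $Z'$. A short way-out argument finishes. Your
outline would be complete once you replace the hand-wave with this
smooth-factorization trick.
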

\vskip-2pt
(Note  that $v$ is flat, so that $v^*\cong\bL v^*$; and similarly for
$u$.)

\vskip2pt

Before presenting a proof, we derive global
versions of some results established earlier for homomorphisms of
rings.\vspace{-2pt}

\begin{remark}\label{rem:semidualizing, global}
If $\sigma\colon K\to S$ is a homomorphism of rings that is essentially
of finite type
and $g\colon V=\Spec S\to\Spec K=W$ is the corresponding scheme-map
then, with ${}^\sim$
denoting sheafification---an equivalence of categories from $\D(S)$ to 
$\Dqc(V)$, with quasi-inverse $\mathbf R\Gamma(V,-)$,
see \cite[5.5]{BN}---there is an isomorphism in $\D(V)$:\vspace{-2pt}
 \begin{equation}\label{reldual}
g^!\mathcal O_W\simeq (D^\sigma)^\sim.
 \end{equation}
 \vskip-2pt
To see this,  factor $\sigma$ as $K\to P\!:= V^{-1}K[x_1,\dots,x_{\<d}]\twoheadrightarrow S$ (see~\eqref{eq:eft}), so that, correspondingly,
 $g=g^{}_1g^{}_2$ with $g^{}_1$ essentially smooth of relative dimension $d$ 
and $g^{}_2$ a closed immersion; then by ~\eqref{smooth!}, \eqref{finite},
and  Theorem~\ref{thm:uniqueness},\vspace{-2pt}
$$
 g^!\mathcal O_W\simeq g_2^!g_1^!\mathcal O_W\simeq 
 \Shift^{d}\Rhom PS{\Omega^{d}_{P|K}}\simeq (D^\sigma)^\sim.
 $$
 
So the following assertion, for an arbitrary scheme\kern.5pt-map $f\colon X\to Y$, globalizes Theorem \ref{thm:reduction1}(1)---and supports our calling any $\OX$-complex isomorphic in
$\D(X)$ to~$f^!\mathcal O_Y$   a \emph{relative dualizing complex for}~$f$.
Set
$$
D_{\<\<f}\mkern.5mu M\!:=\RH_X(M,\>f^!\mathcal O_Y\<)\qquad\big(M\in\D(X)\big).
$$
Then \emph{the contravariant functor\/ $D_{\<\<f}$ takes\/ $\dcatf f$
into itself, and for every\/ \mbox{$M\in\dcatf f$} the canonical map is an
isomorphism} $M\iso D_{\<\<f}D_{\<\<f}\mkern.5mu M$.

Indeed, the proof of \cite[p.\,259, 4.9.2]{Il3} (in whose first
line (4.8) should be (4.9)) applies here, with ``localizing immersion" in
place of ``open immersion," and with ``essentially smooth''
in place of ``smooth,'' see Remark~\ref{nice}. (Actually, the assertion being
local on both $X$ and $Y\<$, for compactifiability of $f$ one can use
Example~\ref{example} rather than the compactification theorem \cite[4.1]{Nk2}.)
\end{remark}

For\/ $E=\mathcal O_Y$ and $D_{\<\<f}\mkern.5mu M$ in place of $M$,
Theorem \ref{thm:reduction3} and Remark \ref{rem:semidualizing,
global} yield the next Corollary, which bears comparison---at least
formally---with Verdier's ``kernel theorem" \cite[p.\,44, Thm.\,4.1]{V}:
\vspace{-1pt}

\begin{corollary}
\label{cor:reduction3}
\pushQED{\qed}%
Under the assumptions of~\emph{\ref{thm:reduction3}} there exists a natural
iso\-morphism\vspace{-1pt}
\[
\delta^!(v^*\<\<M\Otimes{X'}\bL g^*\<\<N)\iso
\RH_Z(\bL\nu^*\<\<D_{\<\<f}\mkern.5mu M, \gamma^!N)\,.
\qedhere
\]
\end{corollary}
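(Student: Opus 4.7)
The plan is to deduce the corollary directly from Theorem~\ref{thm:reduction3}(iii) by specializing $E=\mathcal O_Y$ and substituting $D_{\<\<f}\mkern.5mu M$ for $M$, then invoking the biduality isomorphism of Remark~\ref{rem:semidualizing, global}.

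First I would verify that the hypotheses of Theorem~\ref{thm:reduction3} are met for $(D_{\<\<f}\mkern.5mu M,\mathcal O_Y,N)$ in place of $(M,E,N)$. Since $M\in\dcatf f$, Remark~\ref{rem:semidualizing, global} gives $D_{\<\<f}\mkern.5mu M\in\dcatf f$; the structure sheaf $\mathcal O_Y$ is trivially in $\dcatf Y$; and $N\in\Dqcpl(Y')$ by assumption. Thus Theorem~\ref{thm:reduction3}(iii) applies and yields
\[
\delta^!\bigl(v^*\RH_{\<X}(D_{\<\<f}\mkern.5mu M,\>f^!\mathcal O_Y)\Otimes{X'}\bL g^*\<\<N\bigr)
\iso
\RH_Z\bigl(\bL\nu^*D_{\<\<f}\mkern.5mu M,\>\gamma^!(u^*\mathcal O_Y\Otimes{Y'}\< N)\bigr).
\]

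Next I would simplify both sides. On the right, the canonical isomorphism $u^*\mathcal O_Y\cong\mathcal O_{Y'}$ together with the unit law for $\Otimes{Y'}$ gives $u^*\mathcal O_Y\Otimes{Y'}\<N\simeq N$, so $\gamma^!(u^*\mathcal O_Y\Otimes{Y'}\<N)\simeq\gamma^!N$. On the left, by definition $\RH_{\<X}(D_{\<\<f}\mkern.5mu M,f^!\mathcal O_Y)=D_{\<\<f}D_{\<\<f}\mkern.5mu M$, and Remark~\ref{rem:semidualizing, global} supplies the biduality isomorphism $D_{\<\<f}D_{\<\<f}\mkern.5mu M\iso M$. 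Pulling this isomorphism back by the flat map $v$ and tensoring with $\bL g^*\<\<N$ yields
\[
v^*\RH_{\<X}(D_{\<\<f}\mkern.5mu M,\>f^!\mathcal O_Y)\Otimes{X'}\bL g^*\<\<N
\iso
v^*\<\<M\Otimes{X'}\bL g^*\<\<N,
\]
to which $\delta^!$ may then be applied. Splicing these simplifications into the displayed isomorphism above produces the desired natural isomorphism.

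The only subtle point is naturality: each ingredient (the biduality map, the unit of the tensor, and the base-change isomorphism $u^*\mathcal O_Y\cong\mathcal O_{Y'}$) is functorial in its arguments, and Theorem~\ref{thm:reduction3}(iii) already asserts functoriality in $M$ and $N$. So the composition is natural, and no further argument is needed beyond assembling these pieces.
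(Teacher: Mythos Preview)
Your proposal is correct and follows precisely the approach indicated in the paper: the text immediately preceding the corollary states that it is obtained from Theorem~\ref{thm:reduction3} by taking $E=\mathcal O_Y$ and replacing $M$ by $D_{\<\<f}\mkern.5mu M$, together with the biduality isomorphism of Remark~\ref{rem:semidualizing, global}. Your verification of the hypotheses and the two simplifications (on the right via $u^*\mathcal O_Y\cong\mathcal O_{Y'}$, on the left via $M\iso D_{\<\<f}D_{\<\<f}\mkern.5mu M$) are exactly what is needed.
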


\pagebreak[3]

\begin{corollary}\label{cor:reduction5}
Let $f\colon X\to Y$ be a flat scheme-map. Set $X'\!:=X\times_Y X,$
with canonical projections  $X\xla{\pi_1}X'\xra{\pi_2}X$ and diagonal
map $\delta\colon X\to X'\<$.

There are
natural isomorphisms, for  $M\in\dcatf f,$ $E\in\dcatf{Y}$ and $N\in\Dqcpl(X)\colon$
$$
\delta^!\big(\pi_1^*\RH_{\<X}( M, f^!\<E)\Otimes{X'} \pi_2^*N\big)
\iso\RH_{\<X}\big( M,\>\> f^*\<\<E\Otimes{\<\<X}\< N\big).
$$
\end{corollary}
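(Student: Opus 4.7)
The plan is to derive this corollary as a direct specialization of Theorem~\ref{thm:reduction3}. Choose the diagram
\[
\xymatrixrowsep{2pc}\xymatrixcolsep{2pc}
\xymatrix{
&&X \ar@{->}[dr]^-{f}
\\
X \ar@{->}[r]^-{\delta} \ar@/^1.5pc/[urr]^-{\id} \ar@/_1.5pc/[drr]_-{\id}
& X' \ar@{->}[dr]_-{\pi_2} \ar@{->}[ur]^-{\pi_1} &\Xi & Y
\\
&& X \ar@{->}[ur]_-{f}
}
\]
taking $Z=X$, $Y'=X$, $u=f$, $v=\pi_1$, $g=\pi_2$, $\nu=\gamma=\id_X$, and $\delta$ the diagonal. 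One has to verify the five hypotheses of Theorem~\ref{thm:reduction3}: the square $\Xi$ is a fiber square by the definition of $X'=X\times_Y X$; the map $u=f$ is flat by assumption; $f$ has finite flat dimension (namely $0$) because it is flat; the diagonal $\delta$ is proper because the standing separatedness assumption makes $\delta$ a closed immersion; and $g=\pi_2$ is the base change of $f$ along $f$, hence flat, so $\bL g^*=g^*=\pi_2^*$ and likewise $\bL\nu^*=\id$.

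With these identifications in place, the conclusion of Theorem~\ref{thm:reduction3}(iii) reads
\[
\delta^!\bigl(\pi_1^*\RH_X(M,f^!E)\Otimes{X'}\pi_2^*N\bigr)
\iso
\RH_X\bigl(M,\>\gamma^!(f^*E\Otimes X N)\bigr),
\]
and since $\gamma=\id_X$ is essentially \'etale one has $\gamma^!=\gamma^*=\id$ by~\ref{twisted inverse}(i), so the right-hand side simplifies to $\RH_X(M,f^*E\Otimes X N)$, giving the asserted isomorphism. Naturality and the fact that both sides are well-defined objects of $\Dqcpl(X)$ follow from parts (i) and (ii) of Theorem~\ref{thm:reduction3}, once one notes that $u^*E\Otimes{Y'}N=f^*E\Otimes X N$ under our identifications.

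The main substance is the verification of hypotheses rather than any new computation; the only non-cosmetic point is that $\delta$ is proper and $\pi_2$ is flat, both of which are standard consequences of the standing separatedness assumption and of base change of flat maps, respectively. Given the power of Theorem~\ref{thm:reduction3}, no further obstacle arises.
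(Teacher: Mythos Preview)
Your argument is correct and matches the paper's own proof: both derive the corollary by specializing Theorem~\ref{thm:reduction3} with $Z=X$, $Y'=X$, $u=f$, $v=\pi_1$, $g=\pi_2$, so that $\nu=\gamma=\id_X$. You give a fuller verification of the hypotheses than the paper does, but the approach is identical.
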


\begin{proof}
The maps $\pi_1$  and $\pi_2$ are flat along with $f$.  The assertion is
just the special case of Theorem~\ref{thm:reduction3} corresponding to the
data $Z\!:=X$, $Y'\<\!:=X$, $u\!:=f$, $v\!:=\pi_1$, and $g\!:=\pi_2$---so
that $\nu=\gamma=\id^{\<X}$.
 \end{proof}

The first isomorphism in the next corollary is, for flat $f$,  a
globalization of Theorem \ref{thm:reduction2} insofar as the objects
involved are concerned.  This is seen by using the description of
$\delta^!$ given in~\ref{sheaf duality} for the finite map $\delta$,
and the standard equivalence of $\D(S)$ and~ $\Dqc\big(\Spec S\big)$ for
a commutative ring~$S$ \cite[5.5]{BN}.  We won't deal with the relation
between  the corresponding isomorphisms.

\begin{corollary}[Global reduction formulae]\label{formulae}
With $f$ and\/ $\delta\colon X\to X'$ as in ~\ref{cor:reduction5}, there
exist, for\/  $M\in\dcatf f$ 
and\/ $N\in\Dqcpl(X),$ natural iso\-morphisms
\begin{align*}
\delta^!(\pi_1^*M\Otimes{\<\<X'}\pi_2^*N)&\iso \RH_X(\RH_X(M\<,f^!\mathcal O_Y), \>N);
\\
\delta^!\RH_{X'}\<(\pi_1^*M, \pi_2^*N)&\iso \RH_X(M\Otimes{\<\<X}f^!\mathcal O_Y\<, \>N).
\end{align*}
\end{corollary}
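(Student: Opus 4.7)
The plan is to derive both formulas from Corollary~\ref{cor:reduction5}, Remark~\ref{rem:semidualizing, global}, and the twisted-RHom formula~\eqref{twisted RHom}. The first formula falls out by a direct substitution in Corollary~\ref{cor:reduction5}: take $E=\mathcal O_Y$ there (so $f^*E=\mathcal O_X$ and the right-hand side collapses to $\RH_X(M,N)$), and replace $M$ by $D_f M:=\RH_X(M,f^!\mathcal O_Y)$. The substitution is admissible because Remark~\ref{rem:semidualizing, global} shows $D_f$ preserves $\dcatf f$; the same remark furnishes the biduality $D_f D_f M\cong M$, which converts the resulting isomorphism into the first formula.

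For the second formula, the first step is to apply~\eqref{twisted RHom} to the proper closed immersion~$\delta$. Since $M\in\dcatf f\subseteq\Dcmi(X)$ and $\pi_1$ is flat, $\pi_1^*M\in\Dcmi(X')$; likewise $\pi_2^*N\in\Dqcpl(X')$. Using $\bL\delta^*\pi_1^*M=M$ (from $\pi_1\delta=\id_X$), formula~\eqref{twisted RHom} gives
\[
\delta^!\RH_{X'}(\pi_1^*M,\pi_2^*N)\iso\RH_X\bigl(M,\delta^!\pi_2^*N\bigr),
\]
and the tensor-Hom adjunction~\eqref{global adjunction} rewrites the target of the second formula as $\RH_X(M,\RH_X(f^!\mathcal O_Y,N))$. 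It therefore suffices to produce a natural isomorphism
\[
\delta^!\pi_2^*N\iso\RH_X(f^!\mathcal O_Y,N).
\]

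To produce this last isomorphism I plan to exploit the adjunction $\R\delta_*\dashv\delta^!$ (available since~$\delta$ is proper), the base-change iso $\pi_1^*f^!\mathcal O_Y\cong\pi_2^!\mathcal O_X$ from~\ref{twisted rem}(b), the iso $\pi_2^!\mathcal O_X\Otimes{X'}\pi_2^*N\iso\pi_2^!N$ from~\ref{tensor iso} (applicable because $\pi_2$ is flat and $\mathcal O_X$ is perfect), the projection formula~\eqref{projection} for~$\delta$, and the 2-functorial identity $\delta^!\pi_2^!=(\pi_2\delta)^!=(\id_X)^!$. Chaining these ingredients to compute $\Hom_{\D(X')}(\R\delta_*L,\pi_2^*N)$ for a test object $L\in\D(X)$, the argument reduces to identifying $\pi_2^*N$ with $\RH_{X'}(\pi_2^!\mathcal O_X,\pi_2^!N)$ via the map adjoint to~\ref{tensor iso}. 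This biduality-style identification for the relative dualizing complex $\pi_2^!\mathcal O_X=\pi_1^*f^!\mathcal O_Y$ is the main obstacle, since $\pi_2^!\mathcal O_X$ is not invertible in general; I expect it to follow by pulling back via~$\pi_1$ (using~\eqref{^* and Hom}) the homothety isomorphism from Theorem~\ref{thm:reduction1}(2), i.e., its global avatar in Remark~\ref{rem:semidualizing, global}.
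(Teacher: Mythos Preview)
Your treatment of the first isomorphism is correct and identical to the paper's: apply Corollary~\ref{cor:reduction5} with $E=\mathcal O_Y$ and $D_{\<\<f}\mkern.5mu M$ in place of~$M$, then invoke biduality from Remark~\ref{rem:semidualizing, global}.

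For the second isomorphism you also begin exactly as the paper does: apply~\eqref{twisted RHom} to~$\delta$ to obtain $\delta^!\RH_{X'}(\pi_1^*M,\pi_2^*N)\iso\RH_X(M,\delta^!\pi_2^*N)$, then use adjunction to reduce to producing a natural isomorphism $\delta^!\pi_2^*N\iso\RH_X(f^!\mathcal O_Y,N)$. But here you overlook a one-line finish: this last isomorphism is precisely the first formula of the Corollary specialized to $M=\OX$, since $\pi_1^*\OX\cong\mathcal O_{X'}$ and $\RH_X(\OX,f^!\mathcal O_Y)\cong f^!\mathcal O_Y$. That is exactly how the paper concludes.

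Your alternative route via the adjunction $\R\delta_*\dashv\delta^!$ and the identification $\pi_2^*N\iso\RH_{X'}(\pi_2^!\mathcal O_X,\pi_2^!N)$ is not wrong in spirit, but it is circuitous and, as you note, incomplete. To close the ``main obstacle'' you would still need to commute $\pi_2^*N$ inside the $\RH_{X'}(\pi_2^!\mathcal O_X,-)$; since $\pi_2^!\mathcal O_X$ is only $\pi_2$-perfect, not perfect, this requires Lemma~\ref{an iso} (with $M'=\pi_2^!\mathcal O_X\in\dcatf{\pi_2}$) rather than the elementary~\eqref{Hom and Tensor}. So you would be re-invoking the key lemma behind Theorem~\ref{thm:reduction3}, which already yields the first formula and hence the shortcut above.
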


\noindent\emph{Proof.}
For  the first isomorphism, apply  \ref{cor:reduction5} with
$E=\mathcal O_Y$ and $D_{\<\<f}\mkern.5mu M$ in place of $M\<$, and
use the isomorphism $M\iso D_{\<\<f}D_{\<\<f}\mkern.5mu M$ from Remark
\ref{rem:semidualizing, global}.

The second isomorphism is the composition 
\begin{align*}
\delta^!\RH_{\<X'}\<(\pi_1^*M, \pi_2^*N)&\underset{\lift1.2,a,}{\iso}
\RH_{\<X}(\bL \delta^*\pi_1^*M,  \delta^!\pi_2^*N)\\
& \underset{\lift1.2,b,}{\iso} \RH_{\<X}(M,  \RH_X(f^!\mathcal O_Y, N))\\
&\underset{\lift1.2,c,}{\iso}
\RH_{\<X}(M\Otimes{\<\<X}f^!\mathcal O_Y\<,\> N),
\end{align*}
\vskip-2pt\noindent
where the isomorphism $a$ comes from \eqref{twisted RHom},
$b$ from the special case \mbox{$M=\OX$} of the first isomorphism in
\ref{formulae}, and $c$ from the first isomorphism in~
\S\ref{adjunction}.

\pagebreak[3]

The following lemma contains the key ingredient for the proof of Theorem
\ref{thm:reduction3}.

\begin{lemma} \label{an iso}
Let\/ $g\colon X'\to Y'$ be a scheme-map of finite flat dimension.
For all $M'\in\dcatf g,$  $E'\in\dcatf {Y'}$ and $F'\in\Dqcpl(Y'),$ the map
from \eqref{Hom and Tensor} is an isomorphism
\begin{equation}
 \label{eq:an iso}
\psi\colon\RH_{\<X'}( M'\<\<,\>\>g^!\<E')\Otimes{X'}\bL g^*\<\<F'\iso
\RH_{\<X'}( M'\<\<,\>\>g^!\<E'\<\Otimes{X'}\bL g^*\<\<F')\,.
\end{equation}
\end{lemma}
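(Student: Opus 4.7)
The plan is to recognise $\psi$ as an instance of the canonical map~\eqref{Hom and Tensor}, which is already known to be an isomorphism whenever either its $M$- or $F$-argument is perfect. Since neither $M'$ nor $\bL g^*F'$ need be perfect here, the idea is to reduce, locally on~$X'$, to a situation in which one of the arguments becomes perfect after push-forward along a closed immersion.

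First I would reduce to a local problem: being an isomorphism in $\D(X')$ is a local condition on~$X'$, so we may shrink $X'$ and assume that $g$ factors as $X'\xrightarrow{i}Z\xrightarrow{h}Y'$ with $i$ a closed immersion and $h$ essentially smooth of some constant relative dimension~$d$ (such a factorization exists because $g$ is essentially of finite type). Three perfection facts then apply: \textup{(a)} since $M'$ is $g$-perfect, $i_*M'=\R i_*M'$ is perfect on~$Z$; \textup{(b)} since $E'$ is perfect on~$Y'$, the isomorphism~\eqref{smooth!} makes $h^!E'$ perfect on~$Z$; and \textup{(c)} since $g=hi$ has finite flat dimension and $h$ is flat, $i$~itself has finite flat dimension. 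Because $i_*$ is fully faithful on derived categories, it suffices to prove that $\R i_*\psi$ is an isomorphism.

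Next I would compute $\R i_*$ of both sides using standard identities from~\S\ref{A:dual}. For the source, the projection formula~\eqref{projection} for~$i$ together with sheafified duality~\eqref{duality iso} for the proper map~$i$ yield
\[
\R i_*\bigl(\RH_{X'}(M',g^!E')\Otimes{X'}\bL g^*F'\bigr)
\;\cong\;\RH_{Z}(i_*M',h^!E')\Otimes{Z}\bL h^*F'.
\]
For the target, the map~\eqref{^! and Tensor} applied to~$i$ with first argument~$h^!E'$ is an isomorphism---by (b), (c), and~\S\ref{globalized composition}---giving $g^!E'\Otimes{X'}\bL g^*F'\cong i^!(h^!E'\Otimes{Z}\bL h^*F')$; sheafified duality for~$i$ then produces
\[
\R i_*\RH_{X'}(M',g^!E'\Otimes{X'}\bL g^*F')
\;\cong\;\RH_{Z}(i_*M',h^!E'\Otimes{Z}\bL h^*F').
\]
Under these identifications $\R i_*\psi$ should coincide with the map~\eqref{Hom and Tensor} having arguments $i_*M',h^!E',\bL h^*F'$; since $i_*M'$ is perfect by~(a), that map is an isomorphism, completing the proof.

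The main obstacle is the compatibility check at the last step: one must verify that $\R i_*\psi$, threaded through the chain of canonical identifications above, genuinely reduces to an instance of~\eqref{Hom and Tensor} with the perfect complex~$i_*M'$ in its first slot. This is a diagram chase tracking how the adjunctions that define the projection formula, sheafified duality, \eqref{^! and Tensor}, and \eqref{Hom and Tensor} fit together, and is where the real technical content of the argument lies.
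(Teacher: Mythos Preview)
Your reduction to the local situation and the chain of isomorphisms you write down coincide almost verbatim with the paper's argument. The substantive divergence is in the last step. You propose to verify directly that $\R i_*\psi$ matches the instance of~\eqref{Hom and Tensor} with $i_*M'$ in the first slot, and you correctly flag this compatibility as the crux. The paper, however, explicitly \emph{declines} to carry out that verification (``we avoid this somewhat lengthy verification'') and instead uses a way-out argument: fixing $M'$ and $E'$, both source and target of $i_*\psi$ are, as functors of $F'\in\Dqcpl(Y')$, isomorphic to the bounded functor $F'\mapsto\RH_{Z}(i_*M',h^!E')\Otimes{Z}\bL h^*F'$; since $\psi$ is a morphism of $\Delta$-functors, \cite[p.\,69,(iii)]{H} reduces the question to quasi-coherent modules $F'$, then (on the affine $Y'$) to free modules, and finally---because the functor respects direct sums---to the trivial case $F'=\mathcal O_{Y'}$. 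This buys a complete proof without any compatibility diagram; your route is legitimate in principle but leaves the technically heaviest part as a promissory note.

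One small correction: your justification of~(c), that $i$ has finite flat dimension ``since $g=hi$ has finite flat dimension and $h$ is flat,'' does not work as stated---flatness of $h$ alone is not enough. What you need is that $h$ is essentially smooth, so that locally the factorization corresponds to $K\to P\to S$ with $K\to P$ essentially smooth; then Lemma~\ref{smooth:fpd} gives $\fd_PS\le\fd_KS+\pd_{\env P}P<\infty$. (The paper's chain sidesteps this by applying~\eqref{^! and Tensor} to $h$ and to $g$ rather than to $i$.)
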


\begin{proof}
Using the isomorphisms  \eqref{^* and tensor} and  (for open immersions)
\eqref{^* and Hom}, one checks that everything here commutes with
restriction to open subsets on $X'\<$, whence the question is \mbox{local}
on both ~$X'\<$ and $Y'$ (see Remark~\ref{twisted rem}(b).)
Thus it may be assumed\vspace{1pt} that both $X'$ and~$Y'$ are
affine and that $g$ factors as \mbox{$X'\xrightarrow{\lift.85,i,}
Z'\xrightarrow{\lift.85,h,} Y'$} with $i$ a closed immersion and $h$
essentially smooth.

Since $i_*$~preserves stalks of $\mathcal O_{\<X'}$-modules,
therefore $i_*$ is an exact functor, and furthermore, since $\D$-maps
are isomorphisms if they are so at the homology level, it will suffice
to show that $i_*(\psi)$ $(=\R i_*(\psi))$ is an isomorphism in $\D(Z')$.

Before proceeding, note that
$
\RH_{\<X'}( M'\<\<,\> i^!h^!\<E')\in\Dqcpl(X').
$
That's because $i_*M'\in\Dcmi(Z')$, so the duality isomorphism~\eqref{duality iso} and \cite[p.\,92, 3.3]{H} give
$$
i_*\RH_{\<X'}( M'\<\<,\> i^!h^!\<E')
\cong
\RH_{Z'}( i_*M'\<\<,\> h^!\<E')\in\Dqcpl(Z').
$$
In fact,  $\RH_{Z'}( i_*M'\<\<,\> h^!\<E')$ is \emph{perfect} because $i_*M'$ 
and $h^!\<E'$ are both perfect (see~\eqref{smooth!}, \cite[p.\,130, 4.19.1]{Il1}
and \cite[p.\,148, 7.1]{Il1}). 

Recall from ~\ref{tensor iso}.3 that the map \eqref{Hom and Tensor} is an isomorphism
if the complex $M$ is perfect; and that  the map \eqref{^! and Tensor} is an isomorphism
when $f$ is flat and $E$ is perfect.\vspace{1pt}\looseness=-1

\smallskip

Now, there is the sequence of natural isomorphisms:
\begin{xxalignat}{2}
\quad
i_*\big(\RH_{\<X'}(M'\<\<,\> g^!\<E')\Otimes{X'}&\<\bL g^*\<\<F'\big)
&&{ }
\\
\iso&
i_*\big(\RH_{\<X'}( M'\<\<,\> i^!h^!\<E')\Otimes{X'}\<\bL i^*\bL h^*\<\<F'\big)
&&{ }
\\
\iso&
i_*\RH_{\<X'}( M'\<\<,\> i^!h^!\<E')\Otimes{Z'}\<\bL h^*\<\<F'
&&\text{by \eqref{projection}}\\
\iso&
\RH_{Z'}( i_*M'\<\<,\> h^!\<E')\Otimes{Z'}\<\bL h^*\<\<F'
&&\text{by \eqref{duality iso}}
\\
\iso&
\RH_{Z'}\big( i_*M'\<\<,\> h^!\<E'\Otimes{Z'}\<\bL h^*\<\<F'\big)
&&\text{by \eqref{Hom and Tensor}}
\\
\iso&
\RH_{Z'}\big( i_*M'\<\<,\> h^!(E'\Otimes{Y'}\<F')\big)
&&\text{by \eqref{^! and Tensor}}
\\
\iso&
i_*\RH_{\<X'}\<\big( M'\<\<,\>i^! h^!(E'\Otimes{Y'}\<F')\big)
&&\text{by \eqref{duality iso}}
\\
\iso&
i_*\RH_{\<X'}\<\big( M'\<\<, g^!(E'\Otimes{Y'}\<F')\big)
\\
\iso&
i_*\RH_{\<X'}( M'\<\<,  g^!\<E'\Otimes{X'}\<\bL g^*\<\<F')
&&\text{by \eqref{^! and Tensor}.}
\end{xxalignat}

It can be shown that these isomorphisms compose to $i_*(\psi)$; but we 
avoid this somewhat lengthy verification and instead use a ``way-out" 
argument. Fix $M'$ and~$E'$. Via the above sequence of 
isomorphisms, the source and target of~$i_*(\psi)$, 
considered as functors in~$F'\<$, are isomorphic
to the functor $\Upsilon \colon \Dqcpl(Y') \to \Dqcpl(Z')$ given by 
$\Upsilon(F') = \RH_{Z'}( i_*M'\<\<,\> h^!\<E')\Otimes{Z'}\<\bL h^*\<\<F'$.
Since $\RH_{Z'}( i_*M'\<\<,\> h^!\<E')$ is perfect and $h$ is flat,
it follows that $\Upsilon$ is a \emph{bounded functor} \cite[(1.11.1)]{Lp2}, whence the same 
is true of the source and target of~$i_*\psi$. 

Furthermore,  one checks that $\psi$ (and hence $i_*\psi$) is a morphism of $\Delta$-functors (see \cite[\S1.5]{Lp2}).
By \cite[p.\,69, (iii)]{H}, it 
suffices therefore to prove that $i_*\psi$ \emph{is an isomorphism when $F'$
is a quasi-coherent module.} 

Since $Y'$ is affine, any such  $F'$ is a homomorphic image of a free
$\mathcal O_{\<Y'}$-module. Hence, by \cite[p.\,69, (iii)]{H} (dualized), we may
assume that $F'$ itself is free.

Since $\Upsilon$ respects direct sums in that for any small family 
$(F_\alpha)$ in $\D(Z')$, the natural map is an isomorphism
$$
\oplus_\alpha\Upsilon(F_\alpha)\iso \Upsilon(\oplus_\alpha \>F_\alpha),
$$
the same holds for the source and target of $i_*\psi$. There results a reduction to the trivial case when $F'=\mathcal O_{Y'}$.

This completes the proof of Lemma~\ref{an iso}.
\end{proof}

\begin{proof}[Proof of Theorem \emph{\ref{thm:reduction3}}] Assertion (i) holds because
$u^*\<\<E\in\dcatf{Y'}$.

Since $\Xi$ is  a fiber square, the map $v$ is flat along with $u$.
For the same reason, the map $g$ has finite flat dimension---so that
$\bL g^*\<\<N\in\Dqcpl(X')$, see \cite[\S2.7.6]{Lp2}, and the $\mathcal
O_{\<X'}$-complex $v^*\<\<M$ is $g$-perfect, see \cite[p.\,257, 4.7]{Il3}.
We then have natural isomorphisms
 \begin{align*}
v^*\RH_X( M,\mkern1mu f^!\<E)\Otimes{X'}\bL g^*\<\<N
&\iso
\RH_{\<X'}(v^*\<\<M,\mkern1muv^*\<\<f^!\<E)\Otimes{X'}\bL g^*\<\<N\\
&\,\overset{\beta_{\Xi}\<}\lra\,
\RH_{\<X'}(v^*\<\<M,\mkern1mu g^!u^*\<\<E)\Otimes{X'}\bL g^*\<\<N\\
&\,\overset{\psi}\lra\,
\RH_{\<X'}(v^*\<\<M\<\<,\>\>g^!u^*\<\<E\Otimes{X'}\bL g^*\<\<N)\\
&\iso
\RH_{\<X'}\big(v^*\<\<M, \>\>g^!(u^*\<\< E\Otimes{Y'}\<N)\big)
 \end{align*}
described, respectively, in and around  \eqref{^* and Hom}, \eqref{twisted rem}(b),
\eqref{eq:an iso}, and~\ref{tensor iso}.3.

Since $v^*\<\<M\in\dcatf{g}\subset\Dcmi(X')$ and $g^!(u^*\<\< E\Otimes{Y'}\<N)\in\Dqcpl(X')$, therefore
$$
\RH_{\<X'}\big(v^*\<\<M, \>\>g^!(u^*\<\< E\Otimes{Y'}\<N)\big)\in\Dqcpl(X'),
$$
cf.~\cite[p.\,92, 3.3]{H}. Assertion (ii) in~\ref {thm:reduction3} results.

The composition of the maps  above induces the first isomorphism below:
\begin{align*}
\delta^!\big(v^*\RH_{\<X}( M, f^!\<E)\Otimes{X'} \bL g^*\<\<N\big)
&\iso\,\delta^!\RH_{\<X'}\big(v^*\<\<M,
\mkern1mu g^!(u^*\<\< E\Otimes{Y'}N)\big)\\
&\iso\,
\RH_Z\big(\bL\delta^*v^*\<\<M, \mkern1mu
\delta^!g^!(u^*\<\<E\Otimes{Y'}N)\big)\\
&\iso\,
\RH_Z\big(\bL\nu^*\<\<M,\> \gamma^!(u^{\<*}\<\<E\Otimes{Y'} N)\big).
\end{align*}
The second isomorphism is from \eqref{twisted RHom}.  The third isomorphism is canonical.
\end{proof}

  \section*{Acknowledgments}
At the meeting \emph{Hochschild Cohomology: Structure and applications}
at BIRS in September, 2007, three of the authors discussed with Amnon
Yekutieli and James Zhang possible connections between the results in the
preprint version of \cite{AI:mm}, \cite{YZ1}, and Grothendieck duality.
These conversations started the collaboration that led to the present
paper and the papers \cite{AIL1} and \cite{AIL2}.  We are grateful to
the Banff International Research Station for sponsoring that meeting, to
Yekutieli and Zhang for fruitful discussions, and to Teimuraz Pirashvili
for pointing us to \cite{BP}

\end{document}